\newcommand{\self}[1]{}
\newcommand{\selfnote}[1]{}
\newcommand{\selfnoteb}[1]{} % For low priority tasks
\numberwithin{equation}{section}%
\theoremstyle{plain} \newtheorem{prop}{Proposition}[section]											%%
\theoremstyle{plain} \newtheorem{lemma}[prop]{Lemma}												%%
\theoremstyle{plain} 												%%
\theoremstyle{plain} \newtheorem{defin}[prop]{Definition}												%%
\theoremstyle{plain} \newtheorem{thm}[prop]{Theorem}												%%
\theoremstyle{plain} 											%%
\theoremstyle{plain} 											%%
\theoremstyle{plain} 											%%
\theoremstyle{plain} 										%%
\theoremstyle{plain} 											%%
\theoremstyle{plain} \newtheorem{claim}[prop]{Claim}												%%
\theoremstyle{plain} 												%%
\newsavebox{\detailsbox}													%					%%
\newenvironment{details}													%					%%
{\begin{lrbox}{\detailsbox}\begin{minipage}{\textwidth}\footnotesize$\ $}				%					%%
{\end{minipage}\end{lrbox}\fbox{\usebox{\detailsbox}}}							%					%%							
\newcommand{\reals}{{\mathbb R}}																						%%
\newcommand{\one}{{\mathbf 1}}																						%%
\newcommand{\One}[2]{\mathbf{1}_{\left[ #1,#2 \right]}}																		%%
\newcommand{\norm}[1]{\left\Vert#1\right\Vert}																				%%
\newcommand{\abs}[1]{\left\vert#1\right\vert}																				%%
\newcommand{\ud}{\,\mathrm{d}}																						%%
\newcommand{\be}{\begin{equation}}																					%%
\newcommand{\ee}{\end{equation}}																						%%
\newcommand{\PR}{\mathbb{P}}																						%%
\newcommand{\rkhs}{\mathcal{H}}																						%%
\newcommand{\Ee}{E_{\epsilon}}																						%%
\newcommand{\Beta}{\mathrm{B}}																						%%
\newcommand{\DIV}{\operatorname{div}}
\newcommand{\TR}{\operatorname{tr}}
\newcommand{\MID}{\ \ \rule[-0.25cm]{0.1pt}{0.6cm}\ }	
\newlength{\test}																											%				%%
 \title{The Onsager-Machlup functional associated with additive fractional noise}%
 \author{Yoha\"{\i} Maayan\footnote{Technion - Israel Institute of Technology. email: \href{mailto:ymaayan@technion.ac.il}{ymaayan@technion.ac.il}.}
%  \and
 %Eddy Mayer-Wolf\footnotemark[1]
 }
 \date{\today}
\begin{document}

\maketitle

\setlength{\parskip}{6pt}
\setlength{\parindent}{0pt}

%%%%%%%%%%%%%%%%%%%%%%%%%%%%%%%%%%%%%%%%%%%%%%%%%%%%%%%%%%%%%%%%%%%%%%%%%%%%%%%%%%
%%%%%%%%%%%%%%%%%%%%%%%%%%%%%%%%%%%%%%%%%%%%%%%%%%%%%%%%%%%%%%%%%%%%%%%%%%%%%%%%%%
%%%%%%%%%%%%%%%%%%%%%%%%%%%%%%%%%%%%%%%%%%%%%%%%%%%%%%%%%%%%%%%%%%%%%%%%%%%%%%%%%%
%%%%%%%%%%%%%%%%%%%%%%%%%%%%%%%%%%%%%%%%%%%%%%%%%%%%%%%%%%%%%%%%%%%%%%%%%%%%%%%%%%
%%%%%%%%%%%%%%%%%%%%%%%%%%%%%%%%%%%%%%%%%%%%%%%%%%%%%%%%%%%%%%%%%%%%%%%%%%%%%%%%%%
%%%%%%%%%%%%%%%%%%%%%%%%%%%%%%%%%%%%%%%%%%%%%%%%%%%%%%%%%%%%%%%%%%%%%%%%%%%%%%%%%%
%%%%%%%%%%%%%%%%%%%%%%%%%%%%%%%%%%%%%%%%%%%%%%%%%%%%%%%%%%%%%%%%%%%%%%%%%%%%%%%%%%
%%%%%%%%%%%%%%%%%%%%%%%%%%%%%%%%%%%%%%%%%%%%%%%%%%%%%%%%%%%%%%%%%%%%%%%%%%%%%%%%%%

\begin{abstract}
\normalsize
\noindent
We consider the solution of a stochastic differential equation with additive multidimensional fractional noise. In the case $\frac14<H<\frac12$, we compute the Onsager-Machlup functional (with respect to the driving fractional Brownian motion) for the supremum norm and the H\"{o}lder norms with exponent $\alpha \in \left(0,H-\frac14\right)$ for any element of the Cameron-Martin space $\mathcal H_H$, extending a previous result of Moret and Nualart. In the more general case $H<\frac12$ and $\alpha \in \left(0,H\right)$, we formulate a condition on $h\in\mathcal H_H$ under which the computation of the Onsager-Machlup functional $J\left(h\right)$ follows.
\end{abstract}
\vspace{\stretch{1}} KEY WORDS: Fractional Brownian motion,
Onsager-Machlup functional, Stochastic Integral, Stochastic differential equations, Small ball probabilities.\\
AMS 2010 Mathematics Subject Classification: Primary 60G22;
Secondary 60H10, 60G15.
\newpage

% !TEX root = OM_paper.tex

%%%%%%%%%%%%%%%%%%%%%%%%%%%%%%%%%%%%%%%%%%%%%%%%%%%%%%%%%%%%%%%%%%%%%%%%%%%%%%%%%%
%%%%%%%%%%%%%%%%%%%%%%%%%%%%%%%%%%%%%%%%%%%%%%%%%%%%%%%%%%%%%%%%%%%%%%%%%%%%%%%%%%
%%%%%%%%%%%%%%%%%%%%%%%%%%%%%%%%%%%%%%%%%%%%%%%%%%%%%%%%%%%%%%%%%%%%%%%%%%%%%%%%%%
%%%%%%%%%%%%%%%%%%%%%%%%%%%%%%%%%%%%%%%%%%%%%%%%%%%%%%%%%%%%%%%%%%%%%%%%%%%%%%%%%%
%%%%%%%%%%%%%%%%%%%%%%%%%%%%%%%%%%%%%%%%%%%%%%%%%%%%%%%%%%%%%%%%%%%%%%%%%%%%%%%%%%
%%%%%%%%%%%%%%%%%%%%%%%%%%%%%%%%%%%%%%%%%%%%%%%%%%%%%%%%%%%%%%%%%%%%%%%%%%%%%%%%%%
%%%%%%%%%%%%%%%%%%%%%%%%%%%%%%%%%%%%%%%%%%%%%%%%%%%%%%%%%%%%%%%%%%%%%%%%%%%%%%%%%%
%%%%%%%%%%%%%%%%%%%%%%%%%%%%%%%%%%%%%%%%%%%%%%%%%%%%%%%%%%%%%%%%%%%%%%%%%%%%%%%%%%

\section{Introduction}

The Onsager-Machlup functional was first introduced by its two physicist eponyms in~\cite{OM_original} and~\cite{OM2_original}. It was later put in precise mathematical context and computed: first by Stratonovich in~\cite{Stratonovich_OM}; then with a slightly different definition by Ikeda and Watanabe in~\cite{Ikeda_Watanabe}, and for diffusions on manifolds by Takahashi and Watanabe in~\cite{Takahashi&Watanabe_OM} and Fujita and Kotani in~\cite{Fujita_Kotani_OM}. We adopt the definition from the latter three papers.

Consider the stochastic `differential' equation with additive noise
%\begin{equation}\label{SDE}
%\ud X_t=b\left(X_t\right)\ud t+\ud B_t,\ \ X_0=0.
%\end{equation}
\begin{equation}\label{SDE}
X_t=\int_0^tb\left(X_s\right)\ud s+B_t.
\end{equation}
The Onsager-Machlup functional $J\left(\Phi\right)$ is defined by
\begin{equation}\label{OM_def}
e^{J\left(\Phi\right)}=\lim_{\epsilon\searrow 0}\frac{P\left(\norm{X-\Phi}<\epsilon\right)}{P\left(\norm{B}<\epsilon\right)}
\end{equation}
for suitable $\Phi$s.

The norm that was used in the aforementioned papers was the supremum norm $\norm{h}=\sup_{t\in\left[0,1\right]}\abs{h_t}$, and $\left(B_t\right)_{t\in\left[0,1\right]}$ was a Brownian motion. The Onsager-Machlup functional~\eqref{OM_def} was shown to exist for $\Phi\in C^2\left(\left[0,1\right]\right)$ in~\cite{Ikeda_Watanabe}, then extended by Zeitouni in~\cite{Zeitouni_OM} to differentiable $\Phi$s such that $\Phi'$ is H\"{o}lder continuous (of any order), and finally for any $\Phi$ in the Cameron-Martin space associated with the Brownian motion by Shepp and Zeitouni in~\cite{Shepp_Zeitouni}.

The latter two authors then proposed in~\cite{Shepp_Zeitouni_2} a generalisation to other norms $\norm{\cdot}$ in~\eqref{OM_def}. Specifically, the $\alpha$-H\"{o}lder norms for $\alpha<\frac13$ in the multidimensional case, and $\alpha<\frac12$ in the one-dimensional case; and the $L^p$ norms for $p\geq 4$. Capitaine extended in~\cite{Capitaine} the class of norms for which the computation is valid, to include for example: the $\alpha$-H\"{o}lder norm for any $\alpha<\frac12$ in the multidimensional case; and certain fractional Sobolev and Besov norms. Then, in~\cite{Lyons_Zeitouni_OM}, Lyons and Zeitouni suggested a different approach that allows for non-isotropic norms.

The history for the fractional Brownian motion is much shorter. In two distinct one-dimensional cases, D. Nualart and S. Moret obtained in~\cite{Onsager-Machlup} the Onsager-Machlup functional
\begin{equation}\label{OM}
J\left(\Phi\right)=-\frac{1}{2}\left[\norm{\Phi-\int_0^\cdot b\left(\Phi\left(u\right)\right)\ud u}^2_\rkhs+d_H\int_0^1b'\left(\Phi\left(s\right)\right)\ud s\right],
\end{equation}
where $d_H=\left(\frac{2H\Gamma\left(\frac32-H\right)\Gamma\left(H+\frac12\right)}{\Gamma\left(2-2H\right)}\right)^{\frac12}$. In the first case, $\frac14<H<\frac12$ and $\norm{\cdot}$ is either the supremum norm or the $\alpha$-H\"{o}lder norm with $\alpha<H-\frac14$. In this case, they proved~\eqref{OM} for $\Phi$ in the Cameron-Martin space $\rkhs$ associated with the fractional Brownian motion (see Subsection~\ref{subsection:the_CM_space_OM} for details on the Cameron-Martin space), provided that $\Phi$ is H\"{o}lder continuous of an order strictly larger than $\frac12$ (their argument seems to cover any such $\Phi$, though they argued for a slightly smaller class). In the second case, $H>\frac12$ and $\norm{\cdot}$ is the $\alpha$-H\"{o}lder norm with $H-\frac12<\alpha<H-\frac14$. In this case, they proved~\eqref{OM} for any $\Phi$ in the Cameron-Martin space. To the best of our understanding, the value of $d_H$ in~\eqref{OM} should be $1$, the confusion arising from differing normalisations in the literature for the covariance function~\eqref{eq:fBm_covariance} below, see for example~\cite{Decreusefond&Ustunel}. 

In this paper, we prove that the Onsager-Machlup functional exists for the multidimensional fractional Brownian motion $\left(B_t\right)_{t\in\left[0,1\right]}$ with Hurst parameter $\frac14<H<\frac12$, the supremum norm or the $\alpha$-H\"{o}lder norm where $\alpha<H-\frac14$, and any $\Phi\in\rkhs$ in~\eqref{SDE}. We also discuss the case of $\alpha$-H\"{o}lder norms for $H-\frac14\leq\alpha<H$, indicating the condition that $\Phi$ needs to satisfy in order for the Onsager-Machlup functional to exist. In fact, our discussion covers the general case $H<\frac12$ with either the supremum norm or any $\alpha$-H\"{o}lder norm with $\alpha<H$. It would be surprising if this condition on $\Phi$ didn't hold for any $\Phi\in\rkhs$, but this remains open for future research.

We follow methods from~\cite{Shepp_Zeitouni_2} and~\cite{Capitaine}, but also make an attempt to be intrinsic (with respect to the abstract Wiener space associated with the fractional Brownian motion) where we can.

The paper is organised as follows. Section~\ref{section:preliminaries_OM} contains some preliminaries; Section~\ref{section:main_OM} contains the main result which is Theorem~\ref{theorem:OM_functional} and the main points of its proof; and Section~\ref{section:proofs_of_lemmas} contains a few longer proofs of Lemmas from Section~\ref{section:main_OM}.

%%%%%%%%%%%%%%%%%%%%%%%%%%%%%%%%%%%%%%%%%%%%%%%%%%%%%%%%%%%%%%%%%%%%%%%%%%%%%%%%%%%%%%%%%%
%%%%%%%%%%%%%%%%%%%%%%%%%%%%%%%%%%%%%%%%%%%%%%%%%%%%%%%%%%%%%%%%%%%%%%%%%%%%%%%%%%%%%%%%%%
%%%%%%%%%%%%%%%%%%%%%%%%%%%%%%%%%%%%%%%%%%%%%%%%%%%%%%%%%%%%%%%%%%%%%%%%%%%%%%%%%%%%%%%%%%
%%%%%%%%%%%%%%%%%%%%%%%%%%%%%%%%%%%%%%%%%%%%%%%%%%%%%%%%%%%%%%%%%%%%%%%%%%%%%%%%%%%%%%%%%%
%%%%%%%%%%%%%%%%%%%%%%%%%%%%%%%%%%%%%%%%%%%%%%%%%%%%%%%%%%%%%%%%%%%%%%%%%%%%%%%%%%%%%%%%%%
%%%%%%%%%%%%%%%%%%%%%%%%%%%%%%%%%%%%%%%%%%%%%%%%%%%%%%%%%%%%%%%%%%%%%%%%%%%%%%%%%%%%%%%%%%
%%%%%%%%%%%%%%%%%%%%%%%%%%%%%%%%%%%%%%%%%%%%%%%%%%%%%%%%%%%%%%%%%%%%%%%%%%%%%%%%%%%%%%%%%%
%%%%%%%%%%%%%%%%%%%%%%%%%%%%%%%%%%%%%%%%%%%%%%%%%%%%%%%%%%%%%%%%%%%%%%%%%%%%%%%%%%%%%%%%%%

\section{Preliminaries}
\label{section:preliminaries_OM}
\subsection{Basic definitions and notation}

\begin{defin}
For $H\in\left(0,1\right)$,  a one-dimensional fractional Brownian motion with Hurst parameter $H$ is a centered Gaussian process (defined on the time interval $\left[0,1\right]$ for the sake of notational simplicity) with covariance function
\begin{equation}\label{eq:fBm_covariance}
R\left(s,t\right)=\frac12\left(s^{2H}+t^{2H}-\abs{s-t}^{2H}\right).
\end{equation}
A $d$-dimensional fractional Brownian motion is a process $\left(B_t\right)_{t\in\left[0,1\right]}$ such that $\left(B^i_t\right)$, $i=1,\ldots,d$ are independent one-dimensional fractional Brownian motions.
\end{defin}

For $0\leq\alpha<1$ we consider the Banach space $C^{\alpha}\left[a,b\right]$ of $\alpha$-H\"{o}lder continuous functions with the norm
\begin{equation}
\abs{\varphi}_{C^{\alpha}}=\abs{\varphi}_{\infty}+\sup_{a\leq s<t\leq b}\frac{\abs{\varphi_t-\varphi_s}}{\left(t-s\right)^{\alpha}}.
\end{equation}
We identify the case $\alpha=0$ with $C\left[0,1\right]$. We will use the subscript $0$: $C^{\alpha}_0\left[a,b\right]$, $C_0\left[a,b\right]$ etc. to denote the suitable subspace of functions satisfying $\varphi_0=0$.

For $k\in\mathbb Z_+$ we denote by $C^k_b\left(D,\reals^d\right)$ the class of bounded functions $f:D\to\reals^d$ whose derivatives up to order $k$ are bounded and continuous.

A standard application of Kolmogorov's continuity criterion shows that a fractional Brownian motion (has a modification which) is a.s. $\alpha$-H\"{o}lder continuous with any $\alpha<H$. However, by $p$-variation considerations (see~\cite{Nualart} for example), it is not a semimartingale unless $H=\frac12$, in which case it is a classical Brownian motion. Given $0\leq\alpha<H$, we denote by $P\equiv P_{\alpha}$ the probability measure on $\Omega\equiv \Omega_{\alpha}:=C_0^{\alpha}\left[0,1\right]$ under which the canonical process $B_t\left(\omega\right)=\omega_t$ is a $d$-dimensional fractional Brownian motion.

%%%%%%%%%%%%%%%%%%%%%%%%%%%%%%%%%%%%%%%%%%%%%%%%%%%%%%%%%%%%%%%%%%%%%%%%%%%%%%%%%%%%%%%%%%
%%%%%%%%%%%%%%%%%%%%%%%%%%%%%%%%%%%%%%%%%%%%%%%%%%%%%%%%%%%%%%%%%%%%%%%%%%%%%%%%%%%%%%%%%%
%%%%%%%%%%%%%%%%%%%%%%%%%%%%%%%%%%%%%%%%%%%%%%%%%%%%%%%%%%%%%%%%%%%%%%%%%%%%%%%%%%%%%%%%%%

\subsection{Fractional integrals and derivatives}
\label{subsection:the_CM_space_OM}

We will require the following definitions and properties of fractional integrals and derivatives for the description and analysis of the Cameron-Martin space associated with a fractional Brownian motion. A good reference for this subject is~\cite{fractional}.
\begin{defin}
Let $\varphi\in L^1\left[a,b\right]$ and $\alpha>0$. The left-sided Riemann-Liouville fractional integral of order $\alpha$ is the function $I^{\alpha}_{a+}\varphi:\left(a,b\right]\to\reals$ defined by
\begin{equation}\label{eq:definition_of_fractional_integral_OM}
I^{\alpha}_{a+}\varphi\left(t\right)=\frac{1}{\Gamma\left(\alpha\right)}\int_a^t\frac{\varphi_s}{\left(t-s\right)^{1-\alpha}}\ud s.
\end{equation}
If $\lim_{t\to a+}I^{\alpha}_{a+}\varphi\left(t\right)$ exists, it will be denoted by $I^{\alpha}_{a+}\varphi\left(a\right)$.\\
For $\psi:\left[a,b\right]\to\reals$ and $\alpha\in\left(0,1\right)$, the left-handed Riemann-Liouville fractional derivative of order $\alpha$, when it exists, is the function $D^{\alpha}_{a+}\psi:\left(a,b\right]\to\reals$ defined by
\begin{equation}
D^{\alpha}_{a+}\psi\left(t\right)=\frac{1}{\Gamma\left(1-\alpha\right)}\frac{\ud }{\ud t}\int_a^t\frac{\psi_s}{\left(t-s\right)^{\alpha}}\ud s.
\end{equation}
If $\lim_{t\to a+}D^{\alpha}_{a+}\psi\left(t\right)$ exists, it will be denoted by $D^{\alpha}_{a+}\psi\left(a\right)$.
\end{defin}

It should be emphasized that for general $\alpha$-differentiable $\psi$s, $I^{\alpha}_{a+}D^{\alpha}_{a+}\psi\neq\psi$. However,
\begin{equation}\label{eq:inverse_of_fractional_integral_OM}
D^{\alpha}_{a+}I^{\alpha}_{a+}\varphi=\varphi,\ \ \forall\varphi\in L^1\left[a,b\right].
\end{equation}
\begin{equation}\label{eq:non_inverse_of_fractional_derivative_OM}
I^{\alpha}_{a+}D^{\alpha}_{a+}\psi=\psi,\ \ \forall\psi\in I^{\alpha}_{a+}\left(L^1\left[0,1\right]\right).
\end{equation}
We will often use the notation $I^{\alpha}_{a+}\left[\varphi_t\right]$ for $I^{\alpha}_{a+}\varphi\left(t\right)$.\\
The following boundedness property (from~\cite{fractional} as well) will be of use:
\begin{thm}\label{theorem:fractional_integral_of_weighted_holder_OM}
Let $\beta\in\left(0,1\right)$ such that $0<\alpha+\beta<1$ and let $u<\beta+1$.\\
If $f:\left[0,1\right]\to\reals$ is such that $t^uf\left(t\right)$ is $\beta$-H\"{o}lder continuous and vanishes at $t=0$ then $t^uI_{0+}^{\alpha}f$ is $\left(\beta+\alpha\right)$-H\"{o}lder continuous (and vanishes at $t=0$). Moreover, there is a constant $C>0$ such that
\begin{equation*}
\abs{t^uI_{0+}^{\alpha}f}_{C^{\beta+\alpha}}\leq C\abs{t^uf\left(t\right)}_{C^{\beta}}.
\end{equation*}
If, in addition, $u\geq 0$, then any $g$ such that $t^ug_t$ is $\left(\beta+\alpha\right)$-H\"{o}lder continuous and vanishes at $t=0$ is in $I_{0+}^{\alpha}\left(L^1\left[0,1\right]\right)$; $t^uD_{0+}^{\alpha}g$ is $\alpha$-H\"{o}lder continuous vanishing at $t=0$; and there is some $c>0$ such that 
\begin{equation} \label{eq:fractional_integral_of_weighted_holder_OM}
\abs{t^uD_{0+}^{\alpha}g}_{C^{\alpha}}\leq c\abs{t^ug\left(t\right)}_{C^{\beta+\alpha}}.
\end{equation}
\end{thm}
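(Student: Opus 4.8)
The plan is to prove both halves by direct estimation of the integral representations, using the weight to reduce everything to genuine H\"older functions that vanish at the origin. Throughout I write $F(t)=t^uf(t)\in C^\beta_0$, $G(t)=t^ug(t)\in C^{\beta+\alpha}_0$, $\Phi(t):=t^uI^\alpha_{0+}f(t)$, and $h=t-s$ for $0\le s<t\le1$. For the first statement I would begin from the self-similar form obtained by inserting $f(r)=r^{-u}F(r)$ and substituting $r=t\sigma$:
\[ t^uI^\alpha_{0+}f(t)=\frac{t^\alpha}{\Gamma(\alpha)}\int_0^1(1-\sigma)^{\alpha-1}\sigma^{-u}F(t\sigma)\ud\sigma . \]
Since $\abs{F(t\sigma)}\le\abs{F}_{C^\beta}(t\sigma)^\beta$, the integrand is dominated by $\sigma^{\beta-u}(1-\sigma)^{\alpha-1}$, which is integrable precisely because $\alpha>0$ and $u<\beta+1$; this is where those two hypotheses enter. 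A Beta-function computation then gives the pointwise bound $\abs{\Phi(t)}\le C\abs{F}_{C^\beta}t^{\alpha+\beta}$, which yields vanishing at $t=0$ and settles the H\"older quotient in the regime $s\le h$, where $t\le 2h$ forces both $\abs{\Phi(t)}$ and $\abs{\Phi(s)}$ to be $O(h^{\alpha+\beta})$.

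The substantive part is the H\"older quotient of $\Phi$ in the complementary regime $s>h$. Here I would return to the unsubstituted integral $\Gamma(\alpha)I^\alpha_{0+}f(t)=\int_0^t(t-r)^{\alpha-1}r^{-u}F(r)\ud r$, split $\int_0^t=\int_0^s+\int_s^t$, and in each piece replace $F(r)$ by $F(r)-F(s)$ so as to exploit cancellation, collecting the leftover contribution into a single boundary term. The three resulting pieces are controlled by: (i) $\abs{F(r)-F(s)}\le\abs{F}_{C^\beta}\abs{r-s}^\beta$ on $[s,t]$ tested against $(t-r)^{\alpha-1}$, which produces $h^{\alpha+\beta}$; (ii) the nonnegative kernel difference $(s-r)^{\alpha-1}-(t-r)^{\alpha-1}$ on $[0,s]$ tested against $\abs{s-r}^\beta$, a convergent integral (this is where $\alpha+\beta<1$ is needed) that again produces $h^{\alpha+\beta}$; and (iii) the boundary term, estimated with the sharp inequality $t^\alpha-s^\alpha\le\alpha s^{\alpha-1}h$, valid because $s>h$, which in the range $\alpha+\beta<1$ is again $O(h^{\alpha+\beta})$. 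The weight $r^{-u}$ is carried through each estimate, its integrability at the origin once more guaranteed by $u<\beta+1$.

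For the inverse statement I would work from the Marchaud representation of the derivative, obtained from the definition by a standard manipulation that is justified under the present hypotheses,
\[ \Gamma(1-\alpha)D^\alpha_{0+}g(t)=\frac{g(t)}{t^\alpha}+\alpha\int_0^t\frac{g(t)-g(r)}{(t-r)^{1+\alpha}}\ud r , \]
multiply by $t^u$ and substitute $g(r)=r^{-u}G(r)$, so that $t^uD^\alpha_{0+}g$ is expressed entirely through $G\in C^{\beta+\alpha}_0$ and the weight. With $u\ge0$ the boundary term becomes $t^{-\alpha}G(t)$, which is controlled by $\abs{G}_{C^{\beta+\alpha}}$, and the remaining integral converges and is estimated by the same near-diagonal versus away-from-diagonal splitting as above, run in reverse; this produces the asserted bound $\abs{t^uD^\alpha_{0+}g}_{C^{\alpha}}\le c\abs{t^ug}_{C^{\beta+\alpha}}$. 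To certify membership $g\in I^\alpha_{0+}(L^1[0,1])$ I would then verify that the function $f:=D^\alpha_{0+}g$ just produced lies in $L^1$ and satisfies $I^\alpha_{0+}f=g$, by a Fubini computation together with \eqref{eq:inverse_of_fractional_integral_OM}, which is exactly membership in the image.

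I expect the decisive obstacle to be the H\"older estimate in the regime $s>h$ of the first statement (equivalently the Marchaud estimate of the second): this is the step in which the $\alpha$ orders of smoothing are actually gained, it is not accessible from crude pointwise bounds, and it hinges on the cancellation in (i)--(iii) together with the sharp control of $t^\alpha-s^\alpha$. Propagating the weight $t^u$ through every integral under the constraints $u<\beta+1$ and (for the inverse) $u\ge0$ is the secondary, essentially bookkeeping, difficulty.
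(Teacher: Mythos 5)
The first thing to note is that the paper contains no proof of this statement: Theorem~\ref{theorem:fractional_integral_of_weighted_holder_OM} is imported from~\cite{fractional} (Samko--Kilbas--Marichev), so you are not shadowing an in-paper argument but reconstructing the classical one, and your skeleton --- Beta-function pointwise bound, splitting the H\"older quotient into the regimes $s\le h$ and $s>h$, cancellation against the kernel difference, Marchaud form for the inverse --- is indeed the standard skeleton. The gaps are in the execution. In the first half, your pieces (i)--(iii) estimate $I^{\alpha}_{0+}f(t)-I^{\alpha}_{0+}f(s)$, but the quantity to control is $t^uI^{\alpha}_{0+}f(t)-s^uI^{\alpha}_{0+}f(s)$; the cross term $\left(t^u-s^u\right)I^{\alpha}_{0+}f(s)$ never appears in your argument. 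It can be absorbed (for $h<s<t\le 2s$ one has $\abs{t^u-s^u}\le C_u s^{u-1}h$, and your own pointwise bound gives $\abs{I^{\alpha}_{0+}f(s)}\le Cs^{\alpha+\beta-u}$, so the cross term is $O\left(hs^{\alpha+\beta-1}\right)=O\left(h^{\alpha+\beta}\right)$ precisely because $s>h$ and $\alpha+\beta<1$), but it has to be in the proof; relatedly, with the weight $r^{-u}$ inside, piece (i) is of size $s^{-u}h^{\alpha+\beta}$ (say for $u>0$) and only becomes $O\left(h^{\alpha+\beta}\right)$ after the outer weight is restored. More seriously, the hypothesis permits $1\le u<\beta+1$, and there your decomposition fails outright: near $r=0$, $r^{-u}\abs{F(r)-F(s)}\sim r^{-u}\abs{F(s)}$ is non-integrable, so piece (ii) and the boundary term (iii) --- whose evaluation requires $\int_0^s\left(s-r\right)^{\alpha-1}r^{-u}\ud r<\infty$, i.e.\ $u<1$ --- are individually divergent. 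The repair is to forgo subtracting $F(s)$ on $\left(0,s/2\right)$, where the kernel difference is of size $h\left(s-r\right)^{\alpha-2}\le Chs^{\alpha-2}$ and the bound $\abs{F(r)}\le\abs{F}_{C^{\beta}}r^{\beta}$ already yields $O\left(hs^{\alpha+\beta-u-1}\right)$, and to use cancellation only on $\left(s/2,s\right)$.

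The second half has a more fundamental defect. The decisive near-diagonal Marchaud estimate is $\int_s^t\abs{G(t)-G(r)}\left(t-r\right)^{-1-\alpha}\ud r\le\abs{G}_{C^{\alpha+\beta}}\int_s^t\left(t-r\right)^{\beta-1}\ud r=O\left(h^{\beta}\right)$: the exponent your splitting can deliver is $\beta$, not $\alpha$. The bound you assert, $\abs{t^uD^{\alpha}_{0+}g}_{C^{\alpha}}\le c\abs{t^ug}_{C^{\beta+\alpha}}$, is in fact false whenever $\alpha>\beta$: take $u=0$ and $g(t)=t^{\alpha+\beta}$, for which $D^{\alpha}_{0+}g=\frac{\Gamma\left(\alpha+\beta+1\right)}{\Gamma\left(\beta+1\right)}t^{\beta}\notin C^{\alpha}\left[0,1\right]$. (The printed statement carries the same misprint; what is true, and what the paper actually uses twice in the proof of~\eqref{eq:H_norm_for_C1} --- there with $\beta=\epsilon$ tiny, squarely in the regime $\alpha>\beta$ --- is $\abs{t^uD^{\alpha}_{0+}g}_{C^{\beta}}\le c\abs{t^ug}_{C^{\beta+\alpha}}$.) So a correct write-up must change the target exponent, and your sketch, by deferring the key estimate to ``the same splitting run in reverse'', commits to an exponent that this estimate cannot produce. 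Finally, the membership $g\in I^{\alpha}_{0+}\left(L^1\left[0,1\right]\right)$ cannot be obtained from ``Fubini together with~\eqref{eq:inverse_of_fractional_integral_OM}'': that identity is $D^{\alpha}_{0+}I^{\alpha}_{0+}=\operatorname{id}$ on $L^1$, and concluding $g=I^{\alpha}_{0+}D^{\alpha}_{0+}g$ from it presupposes that $g$ already lies in the image --- which is what you are trying to prove. You need either to verify that $I^{1-\alpha}_{0+}g$ is absolutely continuous with $I^{1-\alpha}_{0+}g(0)=0$, or to carry out in full the computation that $I^{\alpha}_{0+}$ applied to your Marchaud expression returns $g$; naming the step does not discharge it.
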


%%%%%%%%%%%%%%%%%%%%%%%%%
\subsection{The Cameron-Martin space}

It has been shown in~\cite{Decreusefond&Ustunel} that the Cameron-Martin space associated with a one-dimensional fractional Brownian motion is $\mathcal H=I^{H+\frac12}_{0+}\left(L^2\left[0,1\right]\right)$. Note that $\mathcal H\subset C_0^{\alpha}\left[0,1\right]$ for all $0\leq\alpha<H$. See also~\cite{Bogachev} and~\cite{Janson} for a review and some background on the Cameron-Martin space.\\
The Cameron-Martin space associated with the $d$-dimensional fractional Brownian motion $\left(B_t\right)$ is $\mathcal H^d$ (we will often omit the exponent $d$ in the sequel). 

Decreusefond and \"{U}st\"{u}nel constructed an isometry $K:L^2\left[0,1\right]\to\mathcal H$ in~\cite{Decreusefond&Ustunel}. It has two useful representations.\\
For $H<\frac12$, let
\begin{equation}\label{eq:hypergeometric_function_OM}
F\left(a,b;c;x\right)=\frac{\Gamma\left(c\right)}{\Gamma\left(b\right)\Gamma\left(c-b\right)}\int_0^1t^{b-1}\left(1-t\right)^{c-b-1}\left(1-tx\right)^{-a}\ud t,\ \ \ c>b>0;\ x<1.
\end{equation}
This is the Gauss hypergeometric function (often denoted ${}_2F_1$ in the literature). Now denote
\begin{equation}\label{eq:kernel_of_isometry_OM}
K\left(t,u\right)=c_H\left(t-u\right)^{H-\frac12}F\left(H-\frac12,\frac12-H;H+\frac12;1-\frac{t}{u}\right)\One{0}{t}\left(u\right)
\end{equation}
and
\begin{equation}\label{eq:OM_K_integral_representation}
Kf\left(t\right)=\int_0^1K\left(t,u\right)f_u\ud u,\ \ \ f\in L^2\left[0,1\right].
\end{equation}
Here $c_H=\sqrt{\frac{2H}{\left(1-2H\right)\Beta\left(1-2H,H+\frac12\right)}}$ (note that $K$ differs by a constant from the operator introduced in~\cite{Decreusefond&Ustunel}, since their covariance function $R\left(s,t\right)$ is multiplied by a constant).
\\
Another representation of $K$ is
\begin{equation}
Kf=c_H\Gamma\left(H+\frac12\right)I_{0+}^{2H}t^{\frac12-H}I_{0+}^{\frac12-H}t^{H-\frac12}f,%\ \ {\scriptstyle (c_H=\sqrt{\frac{2H}{\left(1-2H\right)\Beta\left(1-2H,H+\frac12\right)}})}
\end{equation}
which is particularly useful because it is easy to invert:
\begin{equation}
K^{-1}h=\frac{1}{c_H\Gamma\left(H+\frac12\right)}t^{\frac12-H}D_{0+}^{\frac12-H}t^{H-\frac12}D_{0+}^{2H}h,\ \ \ h\in\mathcal H.
\end{equation}
We also note the following simpler formula for $K^{-1}h$ which holds by virtue of~\cite[Formula~(10.6)]{fractional} whenever $h\in C_0^1\left[0,1\right]$:
\begin{equation}\label{eq:Kinverse_simple_representation_OM}
K^{-1}h=\frac{1}{c_H\Gamma\left(H+\frac12\right)}s^{H-\frac12}I_{0+}^{\frac12-H}s^{\frac12-H}h'.
\end{equation}

%%%%%%%%%%%%%%%%%%%%%%%%%%%%%%%%%%%%%%%%%%%%%%%%%%%%%%%%%%%%%%%%%%%%%%%%%%%
%%%%%%%%%%%%%%%%%%%%%%%%%%%%%%%%%%%%%%%%%%%%%%%%%%%%%%%%%%%%%%%%%%%%%%%%%%%
The isometry $K$ provides a formula for the $\mathcal H$-norm, but it can be quite cumbersome. The following bound is sufficient for many purposes:
\begin{claim}
Assume that $H<\frac12$. If $h$ is differentiable in $\left[0,1\right]$, $h_0=0$ and $h'$ is bounded, then $h\in\mathcal H$ and 
\begin{equation}\label{eq:H_norm_for_C1}
\abs{h}_\rkhs\leq C\abs{h'}_{\infty}.
\end{equation}
\end{claim}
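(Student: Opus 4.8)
The plan is to compute $K^{-1}h$ explicitly via the simple representation~\eqref{eq:Kinverse_simple_representation_OM} and to bound its $L^2$-norm, using that $K$ is an isometry, so that $\abs{h}_\rkhs=\abs{K^{-1}h}_{L^2}$. Two preliminary observations frame the argument. First, since $h$ is differentiable on $\left[0,1\right]$ with $h'$ bounded, the mean value theorem shows $h$ is Lipschitz, hence absolutely continuous, so $h_t=\int_0^t h'_s\ud s$ (using $h_0=0$); in particular $h'\in L^\infty\left[0,1\right]\subset L^2\left[0,1\right]$. Second, formula~\eqref{eq:Kinverse_simple_representation_OM} is stated for $h\in C_0^1\left[0,1\right]$, so I would first establish the bound for such $h$ and then remove the continuity assumption on $h'$ by approximation.

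For $h\in C_0^1$, write $g_s=s^{\frac12-H}h'_s$, so that $\abs{g_s}\leq\abs{h'}_\infty s^{\frac12-H}$, and $K^{-1}h=\frac{1}{c_H\Gamma\left(H+\frac12\right)}s^{H-\frac12}I_{0+}^{\frac12-H}g$ by~\eqref{eq:Kinverse_simple_representation_OM}. Inserting the definition~\eqref{eq:definition_of_fractional_integral_OM} of the fractional integral and using the elementary Beta integral
\begin{equation*}
\int_0^t s^{\frac12-H}\left(t-s\right)^{-\frac12-H}\ud s=\Beta\!\left(\tfrac32-H,\tfrac12-H\right)t^{1-2H},
\end{equation*}
(both arguments $\tfrac32-H$ and $\tfrac12-H$ being positive since $H<\tfrac12$), I obtain the pointwise bound $\abs{I_{0+}^{\frac12-H}g\left(t\right)}\leq c\abs{h'}_\infty t^{1-2H}$, and hence
\begin{equation*}
\abs{K^{-1}h\left(t\right)}\leq C\abs{h'}_\infty\, t^{H-\frac12}\,t^{1-2H}=C\abs{h'}_\infty\, t^{\frac12-H},
\end{equation*}
with $C$ depending only on $H$. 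Squaring and integrating gives $\abs{K^{-1}h}_{L^2}^2\leq C^2\abs{h'}_\infty^2\int_0^1 t^{1-2H}\ud t=\frac{C^2}{2-2H}\abs{h'}_\infty^2$, which is finite precisely because $H<\tfrac12$; this simultaneously shows $K^{-1}h\in L^2$, i.e.\ $h\in\rkhs$, and yields~\eqref{eq:H_norm_for_C1} for $h\in C_0^1$.

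To pass from $C_0^1$ to a general Lipschitz $h$ with $h_0=0$, I would approximate: choose continuous $h'_n$ with $\abs{h'_n}_\infty\leq\abs{h'}_\infty$ and $h'_n\to h'$ in $L^1\left[0,1\right]$, and set $h_n=\int_0^\cdot h'_n\ud s\in C_0^1$. Then $h_n\to h$ uniformly, while the $C^1$ case gives $\abs{h_n}_\rkhs\leq C\abs{h'}_\infty$, so $\left(h_n\right)$ is bounded in the Hilbert space $\rkhs$. Extracting a weakly convergent subsequence $h_{n_k}\rightharpoonup g\in\rkhs$ and using that $\rkhs\hookrightarrow C_0\left[0,1\right]$ continuously (so point evaluations are continuous functionals on $\rkhs$) identifies $g$ with the uniform limit $h$; weak lower semicontinuity of the norm then gives $\abs{h}_\rkhs\leq\liminf_k\abs{h_{n_k}}_\rkhs\leq C\abs{h'}_\infty$. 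The main obstacle is exactly this last step: the hypothesis furnishes only a bounded, possibly discontinuous $h'$, whereas the clean representation~\eqref{eq:Kinverse_simple_representation_OM} requires $h\in C_0^1$, and the approximation-and-weak-compactness argument is what bridges that gap. Alternatively, one could try to verify directly that~\eqref{eq:Kinverse_simple_representation_OM} remains valid for absolutely continuous $h$ with $h'\in L^\infty$, which would make the computation of the second paragraph apply verbatim and render the approximation unnecessary.
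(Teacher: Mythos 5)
Your proof is correct, but it takes a genuinely different route from the paper's. The paper works directly with the general inverse $K^{-1}h=\frac{1}{c_H\Gamma\left(H+\frac12\right)}t^{\frac12-H}D_{0+}^{\frac12-H}t^{H-\frac12}D_{0+}^{2H}h$ and applies the weighted-H\"{o}lder boundedness of fractional derivatives (Theorem~\ref{theorem:fractional_integral_of_weighted_holder_OM}) twice, using only that your hypotheses make $h$ an $\left(H+\frac12+\epsilon\right)$-H\"{o}lder function with $\abs{h}_{C^{H+\frac12+\epsilon}}\leq 2\abs{h'}_{\infty}$; since H\"{o}lder regularity is all that is needed, a merely bounded (possibly discontinuous) derivative is handled in one pass, with no approximation. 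You instead use the simple representation~\eqref{eq:Kinverse_simple_representation_OM}, which forces the two-stage structure you describe: an explicit Beta-integral computation giving the pointwise bound $\abs{K^{-1}h\left(t\right)}\leq C\abs{h'}_{\infty}t^{\frac12-H}$ for $h\in C_0^1$, followed by mollification of $h'$, a uniform bound in $\rkhs$, and identification of the weak limit through the reproducing-kernel property (point evaluations being continuous on $\rkhs$). Both halves of your argument are sound: the Beta-integral exponents are right, the truncated mollification gives $\abs{h'_n}_{\infty}\leq\abs{h'}_{\infty}$ with $h'_n\to h'$ in $L^1$, and weak lower semicontinuity closes the estimate. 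What your route buys is elementarity and an explicit constant — you never invoke Theorem~\ref{theorem:fractional_integral_of_weighted_holder_OM}, and you get the stronger conclusion that $K^{-1}h$ is bounded with decay $t^{\frac12-H}$ at the origin; what the paper's route buys is brevity and directness, covering the general hypothesis without any density or weak-compactness machinery. One shared caveat, not a gap relative to the paper's own standard: writing $K^{-1}h$ before $h\in\rkhs$ is established presupposes membership, so strictly one should define $f$ as the right-hand side of~\eqref{eq:Kinverse_simple_representation_OM}, check $f\in L^2$ and $Kf=h$; the paper's proof takes exactly the same liberty with its inversion formula, the rigor in both cases resting on the composition identities~\eqref{eq:inverse_of_fractional_integral_OM}--\eqref{eq:non_inverse_of_fractional_derivative_OM} and the membership statements in Theorem~\ref{theorem:fractional_integral_of_weighted_holder_OM}.
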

%%%%%%%%%%%%%%%%%%%%%%%%%%%%%%%%%%%%%%%%%%%%%%%%%%%%%%%%%%%%%%%%%%%%%%%%%%%
\begin{proof}
The assumptions imply that $h\in C^{H+\frac{1}{2}+\epsilon}\left[0,1\right]$ and $\abs{h}_{C^{H+\frac12+\epsilon}}\leq2\abs{h'}_{\infty}$
for any $\epsilon>0$ such that $H+\frac{1}{2}+\epsilon<1$. We proceed with one such fixed $\epsilon$.

Since $K$ is an isometry,
\begin{equation*}
\abs{h}_{\rkhs}=\abs{t^{\frac12-H}D^{\frac12-H}F}_{L^2},
\end{equation*}
where $F=t^{H-\frac12}D^{2H}h$. Therefore
\begin{equation*}
\abs{h}_{\rkhs}\leq
C_1\abs{t^{\frac12-H}D^{\frac12-H}F}_{C^{\epsilon}}.
\end{equation*}
We use the convention that the H\"older norm of a
non-H\"older function (of the corresponding exponent) is
$\infty$. According to~\eqref{eq:fractional_integral_of_weighted_holder_OM},
\begin{equation*}
\abs{t^{\frac12-H}D^{\frac12-H}F}_{C^{\epsilon}}\leq
C_2\abs{t^{\frac{1}{2}-H}F}_{C^{\epsilon+\frac{1}{2}-H}}.
\end{equation*}
Since
$t^{\frac{1}{2}-H}F=D^{2H}h$, by~\eqref{eq:fractional_integral_of_weighted_holder_OM} again:
\begin{equation*}
\abs{D^{2H}h}_{C^{\epsilon+\frac{1}{2}-H}}\leq
C_3\abs{h}_{C^{\epsilon+\frac{1}{2}+H}}.
\end{equation*}
Thus $\abs{h}_{\rkhs}\leq 2C_3\abs{h'}_{\infty}$.
\end{proof}
%%%%%%%%%%%%%%%%%%%%%%%%%%%%%%%%%%%%%%%%%%%%%%%%%%%%%%%%%%%%%%%%%%%%%%%%%%%
%%%%%%%%%%%%%%%%%%%%%%%%%%%%%%%%%%%%%%%%%%%%%%%%%%%%%%%%%%%%%%%%%%%%%%%%%%%

%%%To be completed %%%%%%%%%%%%%%
In the following theorem and later on as well, we will abuse notation and also use $K$ to denote the isometry $K^d:L^2\left(\left[0,1\right],\reals^d\right)\to\rkhs^d$.
\begin{thm}\label{theorem:relation_between_delta_and_Ito_integral_OM}
Let $\left(B_t\right)_{t\in\left[0,1\right]}$ be a $d$-dimensional fractional Brownian motion. Then there is a $d$-dimensional Brownian motion $\left(W_t\right)_{t\in\left[0,1\right]}$ defined on the same probability space such that
\begin{equation}\label{eq:representation_of_fBm_in_interval_OM}
B_t=\int_0^tK\left(t,s\right)\ud W_s.
\end{equation}
Moreover, if $\alpha\in L^2\left(\left[0,1\right]\times\Omega,\reals^d\right)$ is adapted, then $\delta\left(K\left(u\right)\right)=\int_0^1\alpha_s\ud W_s$ (the latter being the It\^{o} integral).
\end{thm}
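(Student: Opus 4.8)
The plan is to prove the two assertions in turn, using the kernel $K$ as a bridge between the Malliavin calculus of $B$ and the It\^o calculus of $W$. Let $\mathfrak H$ denote the completion of the step functions on $[0,1]$ under $\langle\mathbf 1_{[0,s]},\mathbf 1_{[0,t]}\rangle_{\mathfrak H}=R(s,t)$, so that $\mathbf 1_{[0,t]}\mapsto B_t$ extends to an isometry of $\mathfrak H$ onto the first chaos $\mathcal H_1\subset L^2(\Omega)$; this $\mathfrak H$ is the Hilbert space underlying the isonormal representation of $B$, and it is isometric to $\mathcal H$. It suffices to treat $d=1$, the vector case following componentwise from the independence of the coordinates.

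For the existence of $W$ and~\eqref{eq:representation_of_fBm_in_interval_OM}, I would first record the covariance identity $\int_0^{s\wedge t}K(t,r)K(s,r)\ud r=R(s,t)$. Via the factorisation $K=c_H\Gamma(H+\frac12)\,I^{2H}_{0+}t^{\frac12-H}I^{\frac12-H}_{0+}t^{H-\frac12}$ this reduces to composition and adjointness properties of the Riemann--Liouville operators (alternatively one invokes~\cite{Decreusefond&Ustunel}). Writing $g_t=K(t,\cdot)$, the identity says exactly that $\mathbf 1_{[0,t]}\mapsto g_t$ extends to a surjective isometry $\mathcal K^*:\mathfrak H\to L^2[0,1]$. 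Composing $(\mathcal K^*)^{-1}$ with the chaos isometry yields an isometry $L^2[0,1]\to\mathcal H_1$, which I write $g\mapsto\int_0^1 g\ud W$; then $W_t:=\int_0^1\mathbf 1_{[0,t]}\ud W$ is a centred Gaussian process with $\E[W_sW_t]=s\wedge t$, i.e.\ a Brownian motion on the same probability space as $B$. The representation $B_t=\int_0^tK(t,s)\ud W_s$ is now immediate, both sides being the common image of $\mathbf 1_{[0,t]}$ under these isometries.

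For the divergence assertion, the structural point is the intertwining $\delta\circ K=\delta^W$, where $\delta^W$ is the Skorokhod integral with respect to $W$. I would derive it from Malliavin duality and two ingredients: the transfer of derivatives $D^WF=\mathcal K^* D^BF$ for cylindrical $F$, and the operator identity $\mathcal K^*K=\mathrm{id}_{L^2}$ (a consequence of the reproducing property of $\mathcal H$ together with $\mathcal K^*\mathbf 1_{[0,t]}=K(t,\cdot)$ and the injectivity of $K$). Indeed, for smooth cylindrical $F$, $\E[F\,\delta(K\alpha)]=\E[\langle D^BF,K\alpha\rangle_{\mathfrak H}]$, and applying the isometry $\mathcal K^*$ rewrites the right-hand side as $\E[\langle D^WF,\alpha\rangle_{L^2}]=\E[F\,\delta^W(\alpha)]$, whence $\delta(K\alpha)=\delta^W(\alpha)$. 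Since $\alpha$ is adapted, $\delta^W(\alpha)=\int_0^1\alpha_s\ud W_s$ is the It\^o integral, which is the claim. The deterministic case is a clean sanity check: for $f\in L^2$, $\delta(Kf)$ is the first-chaos variable dual to $Kf\in\mathcal H$, so $\E[\delta(Kf)\,B_t]=(Kf)(t)$, and by~\eqref{eq:representation_of_fBm_in_interval_OM} and the It\^o isometry this equals $\E[(\int_0^1 f\ud W)\,B_t]$, giving $\delta(Kf)=\int_0^1 f\ud W$.

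The hard part will be making this intertwining rigorous for $H<\frac12$, where $\mathfrak H$ contains genuine distributions and $\mathcal K^*$ is built from right-sided fractional integrals. One must check that $K\alpha$ lies in the domain of $\delta$ and that the formal duality --- in particular $\mathcal K^*K=\mathrm{id}$ and the derivative transfer --- persists as $\alpha$ ranges over merely $L^2$ adapted integrands rather than smooth ones. The safe route is to verify everything first on simple adapted integrands $\alpha=\sum_i\xi_i\mathbf 1_{(t_i,t_{i+1}]}$, using the product rule $\delta(\xi_i K\mathbf 1_{(t_i,t_{i+1}]})=\xi_i\,\delta(K\mathbf 1_{(t_i,t_{i+1}]})-\langle D^B\xi_i,K\mathbf 1_{(t_i,t_{i+1}]}\rangle_{\mathfrak H}$ and the vanishing of the correction term by adaptedness, and then to extend to general $\alpha\in L^2([0,1]\times\Omega)$ by density, the Skorokhod isometry collapsing to the It\^o isometry for adapted integrands.
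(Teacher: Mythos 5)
The paper offers no proof of this theorem to compare against: immediately after the statement it simply records that the result was proved in~\cite{Decreusefond&Ustunel}, so the only meaningful comparison is with that reference. Your proposal is, in substance, the standard transfer-principle argument from that literature: factor the covariance through the kernel $K$, use the resulting isometry to manufacture $W$ inside the first Wiener chaos of $B$, and then intertwine $\delta$ with the Skorokhod integral of $W$ by Malliavin duality, the Skorokhod and It\^{o} integrals coinciding on adapted $L^2$ integrands. That outline is sound, and your fallback of verifying the intertwining on simple adapted integrands and passing to the limit by the It\^{o} isometry is exactly the right way to make it rigorous. Two corrections, both minor. First, the covariance identity alone only shows that $\mathcal K^*$ is an isometry \emph{into} $L^2\left[0,1\right]$; the surjectivity you invoke in order to define $W_t=\int_0^1\mathbf 1_{[0,t]}\ud W$ needs the extra observation that $\operatorname{span}\left\{K\left(t,\cdot\right):t\in\left[0,1\right]\right\}$ is dense in $L^2$, which follows because any $f$ orthogonal to all $K\left(t,\cdot\right)$ satisfies $Kf\equiv 0$, and $K$ is injective (being an isometry of $L^2$ onto $\mathcal H$). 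Second, your closing remark has the dichotomy backwards: it is for $H>\frac12$ that the completion $\mathfrak H$ contains genuine distributions (Pipiras--Taqqu), whereas for $H<\frac12$ it is a space of functions; the delicacy in the present regime is rather that $\mathcal K^*$ is built from fractional \emph{derivatives}, which is precisely why the simple-integrand-plus-density route you describe is the safe one.
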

%%%%%%%%%%%%%%%%%%%%%%%%%
Theorem~\ref{theorem:relation_between_delta_and_Ito_integral_OM} was proved in~\cite{Decreusefond&Ustunel}.
%%%%%%%%%%%%%%%%%%%%%%%%%%%%%%%%%%%%%%%%%%%%%%%%%%%%%%%%%%%%%%%%%%%%%%%%%%%%%%%%%%%%%%%%%%
%%%%%%%%%%%%%%%%%%%%%%%%%%%%%%%%%%%%%%%%%%%%%%%%%%%%%%%%%%%%%%%%%%%%%%%%%%%%%%%%%%%%%%%%%%
%%%%%%%%%%%%%%%%%%%%%%%%%%%%%%%%%%%%%%%%%%%%%%%%%%%%%%%%%%%%%%%%%%%%%%%%%%%%%%%%%%%%%%%%%%

\subsection{Malliavin operators and a Girsanov-type theorem}

If $G$ is a Hilbert space and $F\in L^2\left(P;G\right)$ is a random element in $G$, the Malliavin derivative of $F$ (if it exists) is the operator $DF\in L^2\left(P;\mathcal H\otimes G\right)$ which satisfies
\begin{equation}
\lim_{t\to 0}\frac{F\left(\omega+th\right)-F\left(\omega\right)}{t}=DF\left(h\right),\ \ \forall h\in\mathcal H.
\end{equation}
The domain of $D$ is denoted by $\mathbb D^{1,2}\left(G\right)$.\\
The divergence operator $\delta$ is the adjoint of $D$ as an operator from $L^2\left(P;G\right)$ to $L^2\left(P;\mathcal H\otimes G\right)$. Its domain is denoted $\operatorname{dom}\left(\delta\right)$.\\
For a thorough exposition of Malliavin Calculus, see~\cite{Nualart}.

We will need the following multidimensional equivalent of the Girsanov type theorem that was proved in~\cite{Decreusefond&Ustunel} for one-dimensional fractional Brownian motion:
\begin{thm}\label{Girsanov}
Let $u\in \operatorname{dom}\left(\delta\right)\subset L^2\left(P;\mathcal H\right)$ be an adapted process with respect to the filtration generated by $\left(B_t\right)$. Assume that
\begin{equation}\label{exponential_martingale_condition}
Ee^{\delta\left(u\right)-\frac12\abs{u}_{\mathcal H}^2}=1.
\end{equation}
Then $\left(B_t-u_t\right)$ is a $d$-dimensional fractional Brownian motion with Hurst parameter $H$ under the probability measure $Q$ which is defined by
\begin{equation}\label{girsanov_new_measure}
\ud Q=e^{\delta\left(u\right)-\frac12\abs{u}_{\mathcal H}^2}\ud P.
\end{equation}
\end{thm}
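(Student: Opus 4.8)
The plan is to reduce the statement to the classical $d$-dimensional Girsanov theorem for the underlying Brownian motion $(W_t)$ supplied by Theorem~\ref{theorem:relation_between_delta_and_Ito_integral_OM}. First I would set $\alpha := K^{-1}u$, so that $u = K\alpha$ with $\alpha \in L^2\left(\left[0,1\right]\times\Omega,\reals^d\right)$ and, since $K$ is an isometry, $\abs{u}_{\mathcal H}^2 = \abs{\alpha}_{L^2}^2 = \int_0^1 \abs{\alpha_s}^2 \ud s$. The operators $K$ and $K^{-1}$ are of Volterra type: the kernel $K\left(t,s\right)$ in~\eqref{eq:OM_K_integral_representation} is supported on $s<t$, while $K^{-1}$ is built from the causal fractional operators $I_{0+}$ and $D_{0+}$ (and pointwise multiplication by powers of $t$), so $\alpha_t$ depends only on $u$ restricted to $\left[0,t\right]$. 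Because the representation~\eqref{eq:representation_of_fBm_in_interval_OM} and its inverse are both causal, the filtrations generated by $(B_t)$ and $(W_t)$ coincide, and hence adaptedness of $u$ transfers to $\alpha$. Invoking the second part of Theorem~\ref{theorem:relation_between_delta_and_Ito_integral_OM} componentwise then gives $\delta\left(u\right) = \delta\left(K\alpha\right) = \int_0^1 \alpha_s \ud W_s$, the It\^o integral, so the Radon--Nikodym density becomes the Dol\'eans--Dade exponential
\begin{equation*}
e^{\delta\left(u\right) - \frac12\abs{u}_{\mathcal H}^2} = \exp\!\left(\int_0^1 \alpha_s \ud W_s - \frac12\int_0^1 \abs{\alpha_s}^2 \ud s\right),
\end{equation*}
and hypothesis~\eqref{exponential_martingale_condition} says exactly that this exponential has expectation one.

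Next I would apply the classical $d$-dimensional Girsanov theorem for $(W_t)$: under the measure $Q$ defined by~\eqref{girsanov_new_measure}, the process $\widetilde W_t := W_t - \int_0^t \alpha_s \ud s$ is a $d$-dimensional Brownian motion. It then remains to identify $B - u$ as the fractional Brownian motion generated by $\widetilde W$ through the same kernel. Using the integral representation~\eqref{eq:OM_K_integral_representation} one has $u_t = \left(K\alpha\right)\left(t\right) = \int_0^t K\left(t,s\right)\alpha_s \ud s$, and combining this with~\eqref{eq:representation_of_fBm_in_interval_OM} yields
\begin{equation*}
B_t - u_t = \int_0^t K\left(t,s\right)\ud W_s - \int_0^t K\left(t,s\right)\alpha_s \ud s = \int_0^t K\left(t,s\right)\ud\widetilde W_s.
\end{equation*}
The right-hand side is precisely~\eqref{eq:representation_of_fBm_in_interval_OM} with the $Q$-Brownian motion $\widetilde W$ in place of $W$, so another application of Theorem~\ref{theorem:relation_between_delta_and_Ito_integral_OM} (now under $Q$) identifies $(B_t - u_t)$ as a $d$-dimensional fractional Brownian motion with Hurst parameter $H$ under $Q$.

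The main obstacle is concentrated in the two reductions above. The first is justifying that $\alpha = K^{-1}u$ is genuinely adapted and square-integrable, which requires that the inversion of $K$ respect the filtration; this is where the Volterra/causal structure of the fractional operators, together with the coincidence of the $(B_t)$- and $(W_t)$-filtrations, must be used carefully, and where one notes that in the $d$-dimensional case $K$ and $\delta$ act componentwise on independent coordinates, so the coupling between coordinates enters only through the single scalar density. The second, more delicate point is the commutation of the deterministic drift with the singular Wiener kernel in the last display: since $K\left(t,s\right)\sim\left(t-s\right)^{H-\frac12}$ is singular as $s\uparrow t$ for $H<\frac12$, the identity there must be read in the $L^2$ Wiener-integral sense guaranteed by Theorem~\ref{theorem:relation_between_delta_and_Ito_integral_OM}, not pathwise. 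Once these are granted, the remaining steps are the classical Girsanov theorem and bookkeeping, so essentially all the new work lies in checking that the one-dimensional construction of Decreusefond and \"Ust\"unel is compatible with the joint $d$-dimensional change of measure.
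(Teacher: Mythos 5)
Your proposal is correct and takes essentially the same approach as the paper: the paper does not write out a proof but defers to the Girsanov-type theorem of Decreusefond and \"{U}st\"{u}nel, whose argument is precisely your reduction---via the representation~\eqref{eq:representation_of_fBm_in_interval_OM} and the transfer identity of Theorem~\ref{theorem:relation_between_delta_and_Ito_integral_OM}---to the classical Girsanov theorem for the underlying Brownian motion, with hypothesis~\eqref{exponential_martingale_condition} serving as the martingale property of the Dol\'{e}ans--Dade exponential. Your treatment of the $d$-dimensional extension (componentwise kernels and adaptedness, a single scalar density) is exactly the content of the paper's remark that the multidimensional case is proved in the same way as the one-dimensional one.
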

This theorem can be proved similarly to the aforementioned Girsanov type Theorem from~\cite{Decreusefond&Ustunel}, and the same remark regarding the Novikov condition also applies: if
\begin{equation}\label{novikov}
Ee^{\frac12\abs{u}_{\mathcal H}^2}<\infty
\end{equation}
then Condition~\eqref{exponential_martingale_condition} follows.

%%%%%%%%%%%%%%%%%%%%%%%%%%%%%%%%%%%%%%%%%%%%%%%%%%%%%%%%%%%%%%%%%%%%%%%%%%%%%%%%%%%%%%%%%%
%%%%%%%%%%%%%%%%%%%%%%%%%%%%%%%%%%%%%%%%%%%%%%%%%%%%%%%%%%%%%%%%%%%%%%%%%%%%%%%%%%%%%%%%%%
%%%%%%%%%%%%%%%%%%%%%%%%%%%%%%%%%%%%%%%%%%%%%%%%%%%%%%%%%%%%%%%%%%%%%%%%%%%%%%%%%%%%%%%%%%

\subsection{Probability asymptotics for small balls}

The Onsager-Machlup functional~\eqref{OM_def} is an asymptotic comparison between $\left(X_t\right)$ and $\left(B_t\right)$ of the probability that a process's path belongs to a small ball with respect to the chosen norm. As such, small ball probability asymptotics for $\left(B_t\right)$ yield corresponding asymptotics for $\left(X_t\right)$. In fact, the former asymptotics will be required in order to compute the Onsager-Machlup functional.

For the fractional Brownian motion $\left(B_t\right)_{t\in\left[0,1\right]}$ with Hurst parameter $H\in\left(0,1\right)$, it was proved in~\cite{small_ball_fBm_supremum} that the small ball probabilities behave as follows for the supremum norm:

\begin{thm} \label{theorem:small_ball_probabilities_supremum_fBm}
	The following limit exists and satisfies:
	\begin{equation} \label{eq:small_ball_supremum_for_fBm}
	\lim_{\epsilon\searrow 0} \epsilon^{\frac{1}{H}}\log P\left(\abs{B}_{\infty}<\epsilon\right)  \in  \left(-\infty,0\right).
	\end{equation}
\end{thm}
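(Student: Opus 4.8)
The plan is to reduce the small-ball asymptotics to a large-time persistence statement via the self-similarity of $B$, and then to obtain the limit by a subadditivity argument. Since a fractional Brownian motion satisfies $\left(B_{ct}\right)_{t}\stackrel{d}{=}\left(c^{H}B_{t}\right)_{t}$, taking $c=T$ gives $P\!\left(\sup_{t\in[0,1]}\abs{B_t}<\epsilon\right)=P\!\left(\sup_{t\in[0,T]}\abs{B_t}<1\right)$ when $\epsilon=T^{-H}$. Writing $p(T):=P\!\left(\sup_{t\in[0,T]}\abs{B_t}<1\right)$, this identity reads $\epsilon^{1/H}\log P\!\left(\sup_{t\in[0,1]}\abs{B_t}<\epsilon\right)=\tfrac{1}{T}\log p(T)$ with $T=\epsilon^{-1/H}$. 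Thus the assertion is equivalent to the existence of $\lim_{T\to\infty}\tfrac1T\log p(T)$ together with its membership in $(-\infty,0)$, and the problem becomes one of large-time asymptotics.

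To extract the limit I would show that $a(T):=-\log p(T)$ is subadditive up to a negligible error, so that Fekete's lemma applies. The event $\{\sup_{[0,S+T]}\abs{B}<1\}$ is the intersection of the two symmetric convex events $\{\sup_{[0,S]}\abs{B}<1\}$ and $\{\sup_{[S,S+T]}\abs{B}<1\}$ in path space, so the Gaussian correlation inequality gives $p(S+T)\ge p(S)\,P\!\left(\sup_{[S,S+T]}\abs{B}<1\right)$. The remaining task is to relate the shifted block to a fresh copy on $[0,T]$. Because the increments are stationary, $\left(B_{S+t}-B_S\right)_{t}\stackrel{d}{=}\left(B_t\right)_{t}$, and the difficulty is entirely the offset $B_S$, which has the large variance $S^{2H}$. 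To neutralize it I would work with endpoint-pinned tube probabilities (fractional bridges pinned to $0$ at the block boundaries): pinning removes the offset and makes consecutive blocks conditionally decoupled up to the long-range part of the covariance, which can be controlled by the Gaussian correlation inequality together with direct covariance estimates. One then shows that the pinned and unpinned exponential rates coincide, so that $\lim_{T\to\infty}\tfrac1T\log p(T)$ exists.

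Finiteness and negativity of the limit I would get from classical two-sided Gaussian small-ball bounds. The upper bound $p(T)\le e^{-c_2T}$ (equivalently $\limsup<0$) and the lower bound $p(T)\ge e^{-c_1T}$ (equivalently $\liminf>-\infty$) are standard for fBm in the supremum norm: they follow, for instance, from the Kuelbs--Li and Li--Linde link between the small-ball function and the metric entropy of the unit ball $K$ of the Cameron--Martin space $\mathcal H$ in the sup norm, whose order $\delta^{-1/H}$ is known. These matching orders already force any limit to lie in $(-\infty,0)$; combined with the existence argument of the previous paragraph they complete the proof.

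The hard part is the middle step: the lack of independent increments means that the naive block decomposition does not decouple, and a direct Gaussian correlation lower bound using the absolute-value blocks is far too lossy, since it penalizes each block for the large typical size of $B_{kL}$ and thus grossly overestimates $a$. The substantive work is therefore to justify the endpoint-pinning device and to show quantitatively that the residual long-range dependence between blocks contributes only a subexponential factor, which vanishes after dividing by $T$ and letting $T\to\infty$. An alternative, and perhaps cleaner, route would bypass subadditivity entirely by upgrading the Kuelbs--Li/Li--Linde correspondence to an exact asymptotic, which requires computing the precise constant in the metric-entropy asymptotic $\log N(\delta,K,\abs{\cdot}_\infty)\sim c\,\delta^{-1/H}$ rather than merely its order.
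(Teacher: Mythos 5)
First, a point of reference: the paper does not prove this theorem at all --- it is quoted verbatim from the cited work \cite{small_ball_fBm_supremum} --- so your argument has to stand on its own. Its skeleton is sound and standard: the self-similarity reduction to the existence of $\lim_{T\to\infty}\frac1T\log p(T)$ with $p(T)=P\left(\sup_{t\in[0,T]}\abs{B_t}<1\right)$, and the two-sided bounds $e^{-c_1T}\le p(T)\le e^{-c_2T}$ placing any limit in $\left(-\infty,0\right)$, are both correct. But the heart of the theorem is the \emph{existence} of the limit, and there your proposal contains a genuine gap rather than a proof: everything is delegated to the ``endpoint-pinning'' device, which is asserted, not carried out, and the obstruction it faces is structural. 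Fractional Brownian motion is not Markov, so pinning the path to $0$ at block boundaries does not decouple the blocks: the conditional law of a later block given the (pinned) earlier one is a Gaussian law whose mean is a nonzero random functional of the \emph{entire} past path, and since that shift is not symmetric, the Gaussian correlation inequality --- a statement about centered Gaussian measures and symmetric convex sets --- cannot be invoked to absorb it. Showing that this shift costs only a factor $e^{o(T)}$ is precisely the hard content of the theorem, and it is exactly the step you flag as ``the substantive work.'' Likewise, your opening inequality $p(S+T)\ge p(S)\,P\left(\sup_{t\in[S,S+T]}\abs{B_t}<1\right)$ is valid but unhelpful: relating the second factor to $p(T)$ on the exponential scale is the same missing decoupling problem, since under the unconditional law $B_S$ is of order $S^H$. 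Your fallback route (upgrading the Kuelbs--Li/Li--Linde entropy correspondence to an exact asymptotic) is also not viable as stated: that correspondence loses constants by its nature, and exact entropy constants are not available.

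For comparison, the standard proof of this result in the literature obtains independence from the \emph{driving noise} rather than from the fBm itself: through the moving-average representation one writes $B$ (up to a constant) as a Riemann--Liouville integral plus an independent process with smooth paths on $(0,\infty)$. Blocks of the Riemann--Liouville process driven by disjoint increments of the underlying Wiener process are genuinely independent; the influence of the past enters only as an additive smooth perturbation, and such perturbations change $\log p(T)$ by $o(T)$ because their own small-ball cost is of strictly lower order (this is made precise with Anderson's inequality and a small-ball cost comparison for independent sums). That ingredient --- decoupling at the level of the white noise, plus a negligibility lemma for the smooth remainder --- is what your sketch is missing; with it, approximate subadditivity and Fekete's lemma go through. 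Without it, or a substitute of comparable substance, your proposal is a plan for a proof, not a proof.
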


%%%%%%%%%%%%%%%%%%%%%%%%%%%%%%%%%%%%%%%%%%%%%%%%%%%%%%%%%%%%%%%%%%%%%%%%%%%%%%%%%%%%%%%%%%
%%%%%%%%%%%%%%%%%%%%%%%%%%%%%%%%%%%%%%%%%%%%%%%%%%%%%%%%%%%%%%%%%%%%%%%%%%%%%%%%%%%%%%%%%%
For the H\"{o}lder norms, the following estimates were proved in~\cite{small_ball_fBm_holder}:
\begin{thm} \label{theorem:small_ball_probabilities_holder_fBm}
	Let $0<\alpha<H$. Then %there exist positive constants $c\left(\alpha\right)$ and $C\left(\alpha\right)$ such that for small enough $\epsilon>0$,
	\begin{equation} \label{eq:small_ball_holder_for_fBm}
	\liminf_{\epsilon\searrow 0}\epsilon^{\frac{1}{H-\alpha}}\log P\left(\abs{B}_{C^{\alpha}}<\epsilon\right),\ \ \ \limsup_{\epsilon\searrow 0}\epsilon^{\frac{1}{H-\alpha}}\log P\left(\abs{B}_{C^{\alpha}}<\epsilon\right)\ \in\ \left(-\infty,0\right).
	%-C\left(\alpha\right) \epsilon^{-\frac{1}{H-\alpha}}  \leq  \log P\left(\abs{B}_{C^{\alpha}}<\epsilon\right)  \leq  -c\left(\alpha\right) \epsilon^{-\frac{1}{H-\alpha}}.
	\end{equation}
\end{thm}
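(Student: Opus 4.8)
The plan is to transfer the small-ball event from the path space $C_0^\alpha[0,1]$ to the space of Faber--Schauder (wavelet) coefficients of $B$, where the H\"older norm becomes a weighted supremum and the Gaussian structure can be exploited level by level. Let $\{\phi_{n,k}\}_{n\ge0,\,0\le k<2^n}$ be the Faber--Schauder tent functions at dyadic scale $2^{-n}$ (normalised so that $\abs{\phi_{n,k}}_\infty=1$), and let $c_{n,k}\left(f\right)$ denote the associated coefficient functional, i.e.\ the second difference of $f$ at the corresponding triple of dyadic points. Ciesielski's isomorphism theorem supplies a constant $C\ge 1$ with
\begin{equation*}
C^{-1}\sup_{n,k}2^{n\alpha}\abs{c_{n,k}\left(f\right)}\le\abs{f}_{C^\alpha}\le C\sup_{n,k}2^{n\alpha}\abs{c_{n,k}\left(f\right)},\qquad f\in C_0^\alpha\left[0,1\right].
\end{equation*}
Consequently $\{\abs{B}_{C^\alpha}<\epsilon\}$ is sandwiched between two events of the form $\{\sup_{n,k}2^{n\alpha}\abs{c_{n,k}\left(B\right)}<c'\epsilon\}$, and it suffices to estimate the latter up to changing $\epsilon$ by a fixed factor.

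Each $c_{n,k}\left(B\right)$ is a centred Gaussian linear functional of $B$ built from a second difference at scale $2^{-n}$; the self-similarity $B_{\lambda\cdot}\stackrel{d}{=}\lambda^{H}B$ and stationarity of increments give $\E\,c_{n,k}\left(B\right)^2\asymp 2^{-2nH}$, uniformly in $k$. Writing $c_{n,k}\left(B\right)=2^{-nH}\xi_{n,k}$ with $\{\xi_{n,k}\}$ a centred Gaussian family of uniformly bounded (above and below) variances, the event becomes $\{\max_{k}\abs{\xi_{n,k}}<\epsilon\,2^{n(H-\alpha)}$ for all $n\}$. Since $\alpha<H$, the right-hand side grows in $n$, so the level-$n$ constraint is genuinely binding only for $n<N_\epsilon:=\left(H-\alpha\right)^{-1}\log_2\left(1/\epsilon\right)$, and there are $\asymp 2^{N_\epsilon}=\epsilon^{-1/(H-\alpha)}$ coefficients up to that level. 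A formal computation assuming the $\xi_{n,k}$ independent already gives $\log P\asymp\sum_{n<N_\epsilon}2^n\log\left(\epsilon 2^{n(H-\alpha)}\right)\asymp -\epsilon^{-1/(H-\alpha)}$, the sum being dominated by the $O(1)$ top levels; this is the target order, and the task is to justify it despite the correlations.

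For the upper bound on the probability (which yields $\limsup_{\epsilon\searrow0}\epsilon^{1/(H-\alpha)}\log P<0$) I would discard all but a single well-chosen level $n_0$ slightly below $N_\epsilon$, so that $\delta:=\epsilon\,2^{n_0(H-\alpha)}$ is a small constant while there remain $2^{n_0}\asymp\epsilon^{-1/(H-\alpha)}$ coefficients. Then $P\left(\abs{B}_{C^\alpha}<\epsilon\right)\le P\left(\max_k\abs{\xi_{n_0,k}}<\delta\right)$, and I would control the correlated maximum either by extracting a subfamily indexed by well-separated positions $k$ whose covariance matrix is uniformly close to the identity (using the decay of increment correlations of fBm), or by a Gaussian comparison/correlation inequality, to obtain $P\left(\max_k\abs{\xi_{n_0,k}}<\delta\right)\le\left(c_1\delta\right)^{c_2\,2^{n_0}}\le\exp\left(-c_3\,\epsilon^{-1/(H-\alpha)}\right)$. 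For the lower bound (which yields $\liminf>-\infty$) I would split $B$ into its Faber--Schauder partial sum up to level $N:=\lceil N_\epsilon\rceil$ and the remaining tail: the tail's weighted coefficients decay geometrically beyond level $N$, so a Borel--Cantelli / Gaussian tail estimate bounds $P\left(\abs{\text{tail}}_{C^\alpha}<\epsilon/2\right)$ below by a constant, while the finite-dimensional main part is handled by the standard small-ball lower bound for a nondegenerate Gaussian vector in $\asymp 2^N\asymp\epsilon^{-1/(H-\alpha)}$ coordinates, giving $\exp\left(-C\epsilon^{-1/(H-\alpha)}\right)$; Anderson's inequality (or independence after conditioning) glues the two estimates. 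Together these bounds place both the $\liminf$ and the $\limsup$ in $\left(-\infty,0\right)$.

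The main obstacle is the correlation among the Gaussian coefficients $\xi_{n,k}$: for Brownian motion the L\'evy--Ciesielski coefficients are exactly independent and the two bounds above are classical, whereas for fBm one must quantify the covariance of the second-difference functionals --- bounding the operator norm of the level-$n$ covariance matrix uniformly in $n$ and establishing enough asymptotic independence across positions --- which is where essentially all of the technical effort goes.
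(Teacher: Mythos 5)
First, a point of comparison: the paper does not prove this statement at all --- Theorem~\ref{theorem:small_ball_probabilities_holder_fBm} is quoted from~\cite{small_ball_fBm_holder} --- so your proposal can only be measured against the literature proof, which (in Stolz's work on small balls under H\"older norms) indeed proceeds via the Ciesielski/Faber--Schauder isomorphism, as you do. Your reduction of the H\"older ball to weighted coefficient constraints, the variance computation $\E\, c_{n,k}\left(B\right)^2\asymp 2^{-2nH}$ (uniform in $k$ by stationarity of increments and self-similarity), and your \emph{upper} bound are essentially sound: second differences of fBm at a fixed scale have correlations decaying like $\abs{k-k'}^{2H-4}$, which is summable, so thinning to every $m$-th coefficient with $m$ a fixed large constant makes the covariance matrix uniformly close to the identity, and a direct density estimate then gives $P\left(\max_k\abs{\xi_{n_0,k}}<\delta\right)\leq\left(C\delta\right)^{c2^{n_0}}$. (Your alternative suggestion of a ``Gaussian comparison/correlation inequality'' for this step should be dropped: \v{S}id\'ak-type inequalities bound such probabilities from \emph{below}, not above.)

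The genuine gaps are in the lower bound. First, Anderson's inequality cannot ``glue'': it says $P\left(X+y\in A\right)\leq P\left(X\in A\right)$ for symmetric convex $A$, and gives nothing like $P\left(A\cap B\right)\geq P\left(A\right)P\left(B\right)$; the tool you need is \v{S}id\'ak's inequality (or the Gaussian correlation inequality). Second, the claim that the tail beyond level $N_\epsilon$ is $\epsilon/2$-small in $C^{\alpha}$ with probability bounded below by a constant is false: at level $N_\epsilon+j$ there are $2^{N_\epsilon+j}$ coefficients, each violating its (order-one, for small $j$) constraint with probability $\asymp\exp\left(-c\,2^{2j\left(H-\alpha\right)}\right)$, so the tail event itself has probability $\exp\left(-\Theta\left(\epsilon^{-1/\left(H-\alpha\right)}\right)\right)$. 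If you instead push the cutoff up to a level $N'$ at which the tail event does have constant probability, then $N'-N_\epsilon\asymp\log\log\left(1/\epsilon\right)$, and bounding the main block as a generic ``nondegenerate Gaussian vector'' in $2^{N'}$ coordinates costs $\exp\left(-C\,2^{N'}\right)=\exp\left(-C\,\epsilon^{-1/\left(H-\alpha\right)}\left(\log\left(1/\epsilon\right)\right)^{c}\right)$, which is off by a logarithmic factor in the exponent and does \emph{not} prove $\liminf>-\infty$; note also that for a correlated vector no ``standard'' small-ball lower bound of the product form is available without a correlation inequality anyway. The repair is to abandon the splitting altogether: apply \v{S}id\'ak's inequality to all coefficient constraints simultaneously,
\begin{equation*}
P\left(\forall n,k:\ \abs{c_{n,k}\left(B\right)}<c\epsilon 2^{-n\alpha}\right)\ \geq\ \prod_{n,k}P\left(\abs{c_{n,k}\left(B\right)}<c\epsilon 2^{-n\alpha}\right),
\end{equation*}
and use the sharp one-dimensional estimates: $\log P\left(\abs{Z}<x\right)\asymp\log x$ for small $x$ on the levels $n\leq N_\epsilon$, and $\log P\left(\abs{Z}<x\right)\geq -2e^{-x^2/2}$ for large $x$ on the levels $n>N_\epsilon$. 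The first group contributes $\asymp-2^{N_\epsilon}$ and the second contributes $-\sum_{j\geq 1}2^{N_\epsilon+j}e^{-c2^{2j\left(H-\alpha\right)}}\asymp-2^{N_\epsilon}$, giving the lower bound $\exp\left(-C\epsilon^{-1/\left(H-\alpha\right)}\right)$ of the correct order.
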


%%%%%%%%%%%%%%%%%%%%%%%%%%%%%%%%%%%%%%%%%%%%%%%%%%%%%%%%%%%%%%%%%%%%%%%%%%%%%%%%%%%%%%%%%%
%%%%%%%%%%%%%%%%%%%%%%%%%%%%%%%%%%%%%%%%%%%%%%%%%%%%%%%%%%%%%%%%%%%%%%%%%%%%%%%%%%%%%%%%%%
%%%%%%%%%%%%%%%%%%%%%%%%%%%%%%%%%%%%%%%%%%%%%%%%%%%%%%%%%%%%%%%%%%%%%%%%%%%%%%%%%%%%%%%%%%

\subsection{Nuclear operators and approximate limits}

As we shall see in Lemmas~\ref{lemma:Girsanov_argument} and~\ref{exp_terms_lemma}, our computation of the Onsager-Machlup functional will rely on the computation of approximate limits, defined below. These limits are in turn linked with the nuclearity of kernel operators, as was shown by G. Harg\'{e} in~\cite{Harge_second_chaos}. In this subsection, we record the definitions and previously known results that we will need in the sequel; throughout, $\left(W_t\right)$ is a $d$-dimensional \emph{Brownian} motion.

For a finite-dimensional orthogonal projection $Q=\sum_{i=1}^ne_i\otimes e_i$ (where $e_1,\ldots,e_n$ is an orthonormal system) in the Cameron-Martin space $\mathcal H=\int L^2\left[0,1\right]$, define a random element in $\mathcal H$ (which doesn't depend on the particular choice of representation of $Q$ via the $e_i$'s):
\[ \tilde{Q}=\sum_{i=1}^n\delta\left(e_i\right)e_i=\sum_{i=1}^n\left(\int_0^1\dot{e}_i\left(s\right)\cdot\ud W_s\right) e_i. \]
\begin{defin}[by L. Gross, see~\cite{Gross_measurable} \label{definition:measurable_norm} and~\cite{Gross_abstract_Wiener}]
	A norm $\norm{\cdot}$ on $\mathcal H$ is called a \emph{measurable norm} if the following holds: there exists a random variable $N$ such that $N<\infty$ a.s. and for any increasing sequence $\left(Q_n\right)_{n=1}^{\infty}$ of finite-dimensional orthgonal projections in $\mathcal H$ which converges strongly to the identity operator on $\mathcal H$, $\norm{\tilde{Q}_n}\xrightarrow[n\to\infty]{P}N$.
\end{defin}
Consider the setup of Theorem~\ref{theorem:relation_between_delta_and_Ito_integral_OM}. For $d=1$, it was shown in~\cite{Onsager-Machlup} (see Lemma~6) that the norms $\norm{h}=\abs{Kh'}_{\infty}$ and $\norm{h}=\abs{Kh'}_{C^{\alpha}}$ for $\alpha<H$ are measurable norms on $\mathcal H=\int L^2$, with $N=\abs{B}_{\infty}$ and $N=\abs{B}_{C^{\alpha}}$ respectively. The proof for an arbitrary $d$ is identical, and we will not repeat it here.

From now on, as there will be no risk of confusion, we will denote $N\left(\omega\right)$ from Definition~\ref{definition:measurable_norm} by $\norm{\omega}$.

According to~\cite[Theorem~1]{Gross_measurable}, for any measurable norm% and its corresponding $N$, $P\left(N<\epsilon\right)>0$ 
, $P\left(\norm{\omega}<\epsilon\right)>0$ for every $\epsilon>0$. This leads us to the following central result:
\begin{thm}[by G. Harg\'{e}, see~\cite{Harge_second_chaos}] \label{theorem:approximate_limit_for_nuclear}
	Let $\norm{\cdot}$ be a measurable norm on $\mathcal H$, and let $k\in L^2\left(\left[0,1\right]^2\right)$ be an a.e. symmetric function. If the operator on $L^2\left[0,1\right]$ defined by 
	 \begin{equation} \label{equation:induced_operator} 
	 f\mapsto \int_0^1k\left(\cdot,t\right)f\left(t\right)\ud t
	 \end{equation}
	 is a nuclear operator, then
	 \begin{equation} \label{eq:approximate_limit_definition}
	 \lim_{\epsilon\searrow 0}E\left( I_2\left(k\right) \MID \norm{\omega}<\epsilon \right)=e^{-\operatorname{Tr}\left(k\right)}
	 \end{equation}
	 where $I_2$ is the double Wiener-It\^{o} integral and $\operatorname{Tr}\left(k\right)$ is the trace of the nuclear operator defined above.
\end{thm}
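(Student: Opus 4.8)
The plan is to diagonalise the nuclear operator, reduce the claim to a statement about finitely many Gaussian coordinates, and then remove the truncation using trace-class summability. Since the conditional mean of the mean-zero double integral $I_2(k)$ is linear in $k$ and cannot equal the exponential $e^{-\operatorname{Tr}(k)}$, the relevant object is the second-chaos exponential $e^{I_2(k)}$, which is what I compute. Because the kernel operator $T_k\colon f\mapsto\int_0^1 k(\cdot,t)f(t)\ud t$ is a.e.\ symmetric and nuclear, it admits a spectral decomposition $T_k=\sum_{j\geq1}\lambda_j\,e_j\otimes e_j$ with $(e_j)$ orthonormal in $L^2[0,1]$ and $\sum_j\abs{\lambda_j}<\infty$, so $\operatorname{Tr}(k)=\sum_j\lambda_j$. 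Writing $\eta_j:=\delta\!\left(Ke_j\right)$ (a sequence of i.i.d.\ standard Gaussians, since $K$ is an isometry), the chaos expansion reads $I_2(k)=\sum_j\lambda_j(\eta_j^2-1)$, whence $e^{I_2(k)}=e^{-\operatorname{Tr}(k)}\exp\!\big(\sum_j\lambda_j\eta_j^2\big)$. The theorem is therefore equivalent to
\[
\lim_{\epsilon\searrow0} E\big(\exp\big(\textstyle\sum_{j}\lambda_j\eta_j^2\big)\MID\norm{\omega}<\epsilon\big)=1 ,
\]
that is, to showing that conditioning on a vanishing ball forces the quadratic form to its value at the origin. (Here $2\lambda_j<1$ is needed even for the conditional expectation to be finite; nuclearity gives $\lambda_j\to0$, so this constrains only finitely many eigenvalues.)

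For the finite-rank truncation $k_n=\sum_{j\le n}\lambda_j\,e_j\otimes e_j$, with $I_2(k_n)=\sum_{j\le n}\lambda_j(\eta_j^2-1)$, I would linearise the quadratic by a Hubbard--Stratonovich representation: for $\lambda_j>0$, $\exp(\lambda_j\eta_j^2)=E_{Z_j}\exp(\sqrt{2\lambda_j}\,Z_j\eta_j)$ with $Z_j$ an auxiliary standard Gaussian, and for $\lambda_j<0$ the characteristic-function variant $\exp(\lambda_j\eta_j^2)=E_{Z_j}\exp(i\sqrt{-2\lambda_j}\,Z_j\eta_j)$. Fubini then yields
\[
E\big(\exp\big(\textstyle\sum_{j\le n}\lambda_j\eta_j^2\big)\MID\norm{\omega}<\epsilon\big)=E_Z\, E\big(\exp(\delta(h_Z))\MID\norm{\omega}<\epsilon\big),
\]
where $h_Z=\sum_{j\le n}\sqrt{2\lambda_j}\,Z_j\,Ke_j\in\rkhs$ and $\abs{h_Z}_{\rkhs}^2=\sum 2\lambda_j Z_j^2$ (for the negative eigenvalues the exponent is imaginary). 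The inner expectation is now a \emph{first-chaos} conditional exponential, and the classical first-chaos approximate limit---equivalently the pure-shift small-ball asymptotic $P(\norm{\omega-h}<\epsilon)/P(\norm{\omega}<\epsilon)\to e^{-\abs{h}_{\rkhs}^2/2}$ underlying the linear term of the Onsager--Machlup functional---gives $E(\exp(\delta(h_Z))\MID\norm{\omega}<\epsilon)\to1$ for each fixed $Z$. Crucially, the Cameron--Martin formula $E(\exp(\delta(h))\MID\norm{\omega}<\epsilon)=e^{\abs{h}_{\rkhs}^2/2}\,P(\norm{\omega-h}<\epsilon)/P(\norm{\omega}<\epsilon)$ together with Anderson's inequality $P(\norm{\omega-h}<\epsilon)\le P(\norm{\omega}<\epsilon)$ gives the \emph{uniform-in-$\epsilon$} bound $E(\exp(\delta(h_Z))\MID\norm{\omega}<\epsilon)\le\exp\!\big(\sum_{j}\max(\lambda_j,0)Z_j^2\big)$, which is $E_Z$-integrable precisely when $2\lambda_j<1$. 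Dominated convergence in $Z$ then delivers the finite-rank claim; Gross's positivity $P(\norm{\omega}<\epsilon)>0$ keeps every conditioning well defined.

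It remains to interchange $n\to\infty$ with $\epsilon\searrow0$, and this is where I expect the real difficulty to lie. Writing $e^{I_2(k)}=e^{I_2(k_n)}\,e^{I_2(k-k_n)}$ with $I_2(k-k_n)=\sum_{j>n}\lambda_j(\eta_j^2-1)$ and applying H\"older under the conditional measure, the error is governed by a conditional $L^p$-norm of $e^{I_2(k-k_n)}-1$ together with $\abs{\operatorname{Tr}(k)-\operatorname{Tr}(k_n)}=\sum_{j>n}\abs{\lambda_j}$. The point is that \emph{nuclearity}---$\sum_j\abs{\lambda_j}<\infty$, rather than mere Hilbert--Schmidt membership $\sum_j\lambda_j^2<\infty$---is exactly what forces both the tail trace $\sum_{j>n}\abs{\lambda_j}$ and the tail product $\prod_{j>n}(1-2\lambda_j)^{-1/2}$ to their trivial limits, making the H\"older bound uniform in $\epsilon$ for $n$ large. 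The obstacle is that the conditional measures $P(\,\cdot\MID\norm{\omega}<\epsilon)$ degenerate as $\epsilon\searrow0$, so these uniform moment bounds cannot be imported from a fixed reference measure; they must be produced from the Anderson domination of the previous step (again via the Hubbard--Stratonovich reduction, now applied to the tail) combined with the trace-class summability. Controlling this tail uniformly in $\epsilon$, and thereby justifying the exchange of limits, is the crux of the argument.
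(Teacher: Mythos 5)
First, a point of comparison: the paper does not prove this statement at all --- it is Harg\'{e}'s theorem, imported wholesale from~\cite{Harge_second_chaos} --- so your proposal must be judged on its own merits rather than against an internal argument. You correctly diagnosed the misprint in the statement: the left-hand side of~\eqref{eq:approximate_limit_definition} must read $e^{I_2\left(k\right)}$, which is how the theorem is actually invoked in the proof of Lemma~\ref{lemma:approximate_limit_nuclear_OM}. Your reduction (spectral decomposition $T_k=\sum_j\lambda_j\,e_j\otimes e_j$ with $\sum_j\abs{\lambda_j}<\infty$, chaos expansion $I_2\left(k\right)=\sum_j\lambda_j\left(\eta_j^2-1\right)$) is right, and the finite-rank step --- Hubbard--Stratonovich linearisation, Cameron--Martin plus Anderson for an $\epsilon$-uniform dominating function, the first-chaos limit~\eqref{deterministic_integrand} pointwise in $Z$, then dominated convergence --- is sound as far as it goes. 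Moreover, your admitted ``crux'' (exchanging $n\to\infty$ with $\epsilon\searrow 0$) is avoidable \emph{inside your own framework}: run Hubbard--Stratonovich on the full sum at once. Nuclearity gives $\abs{h_Z}_{\rkhs}^2=\sum_j2\abs{\lambda_j}Z_j^2<\infty$ a.s., Anderson gives the $\epsilon$-uniform domination $E_{\epsilon}\abs{e^{\delta\left(h_Z\right)}}\leq\exp\left(\sum_{\lambda_j>0}\lambda_jZ_j^2\right)$, and $E_Z\exp\left(\sum_{\lambda_j>0}\lambda_jZ_j^2\right)=\prod_{\lambda_j>0}\left(1-2\lambda_j\right)^{-\frac12}<\infty$ precisely because $\sum_j\lambda_j<\infty$; dominated convergence in $Z$ then finishes with no truncation and no tail estimate under the degenerating conditional measures.

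The genuine gap is elsewhere: your standing restriction $2\lambda_j<1$, which you assert is ``needed even for the conditional expectation to be finite.'' That assertion is false, and the restriction is fatal for the purpose this theorem serves in the paper: the theorem is applied (through Lemma~\ref{lemma:approximate_limit_nuclear_OM} combined with Lemma~\ref{exp_terms_lemma}) to the kernels $c\hat{k}$ for \emph{every} $c\in\reals$, so the eigenvalues occurring are arbitrarily large, and a version valid only below $\frac12$ could never yield $\limsup_{\epsilon\to0}E_{\epsilon}e^{cZ_2}\leq1$ for all $c$. What rescues integrability when some $\lambda_j\geq\frac12$ is the conditioning itself, and exploiting it is the idea your proof is missing: split each $\eta_j=\delta\left(Ke_j\right)$ as $\eta_j=\xi_{j,n}+\rho_{j,n}$, where $\xi_{j,n}$ is a $\norm{\cdot}$-continuous linear functional (such functionals are dense in $\rkhs$; in this paper's setting finite linear combinations of the kernels $R\left(t_i,\cdot\right)$ do the job, exactly as in the proof of~\eqref{deterministic_integrand}) and $\abs{\rho_{j,n}}_{L^2}$ is as small as desired. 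On $\left\{\norm{\omega}<\epsilon\right\}$ one has $\abs{\xi_{j,n}}\leq C_n\epsilon$ deterministically, while the conditional tails of $\rho_{j,n}$ are dominated by the unconditional ones via the correlation inequality of~\cite{GEOPSS} (inequality~\eqref{infinite_gaussian_correlation_inequality_deterministic_integrand_proof} of the paper); hence, for every $\lambda$ and $n$ large enough that $4\lambda\abs{\rho_{j,n}}_{L^2}^2<1$, one gets $E_{\epsilon}e^{\lambda\eta_j^2}\leq e^{2\lambda C_n^2\epsilon^2}\left(1-4\lambda\abs{\rho_{j,n}}_{L^2}^2\right)^{-\frac12}$, which is finite and whose $\limsup_{\epsilon\to0}$ tends to $1$ as $n\to\infty$. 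Some such norm-continuous/$L^2$-small decomposition, not Anderson domination of the shift, is what a restriction-free proof (i.e., Harg\'{e}'s) must be built on. A smaller issue of the same kind: for negative eigenvalues your $h_Z$ is complex, and the ``classical first-chaos approximate limit'' you cite is a statement about real shifts; you need to upgrade it (conditional convergence in probability of $\delta\left(g\right)$ to $0$, obtained from the real exponential limit plus Markov's inequality, combined with the uniform integrability that your Anderson bound does provide) before $E_{\epsilon}e^{\delta\left(h_Z\right)}\to1$ can be asserted for fixed $Z$.
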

We will loosely refer to limits of the form~\eqref{eq:approximate_limit_definition} as \emph{approximate limits}; see~\cite{Harge_second_chaos} for precise definitions.

Finally, we state the following result by M. Birman and M. Solomyak which is a special case of~\cite[Theorem~4.1]{nuclear_paper}, tailored for what we will need concerning nuclear operators. Let
\begin{equation}\label{eq:fractional_sobolev_norm}
\abs{h}_{W^{\alpha,2}}=\left(\iint_{\left[0,1\right]^2}\frac{\abs{h_t-h_s}^2}{\abs{t-s}^{\beta}}\ud s\ud t\right)^{\frac12}
\end{equation}
denote the fractional Sobolev norm (see also~\cite{adams} for the definition and some properties of the fractional Sobolev space $W^{\alpha,2}$).
\begin{thm} \label{theorem:nuclear_sufficient_conditions}
	Let $k\left(s,t\right)$ be a measurable function defined for $0\leq s,t\leq 1$ that satisfies
	\begin{equation}
	\abs{\rule{0cm}{0.4cm}\abs{k\left(\cdot,t\right)}_{W^{\alpha,2}}}_{L^2\left(\ud t\right)}<\infty
	\end{equation}
	for some $\alpha>\frac12$. Then the induced operator~\eqref{equation:induced_operator} is a nuclear operator.
\end{thm}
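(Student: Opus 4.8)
The plan is to exhibit the operator $K$ from~\eqref{equation:induced_operator} as an absolutely (trace-norm) convergent sum of rank-one operators, which bounds its nuclear norm directly. Fix the orthonormal basis $\left(\phi_n\right)_{n\geq0}$ of $L^2\left[0,1\right]$ that diagonalizes the Neumann Laplacian, namely $\phi_0\equiv1$ and $\phi_n=\sqrt2\cos\left(n\pi\,\cdot\right)$ for $n\geq1$, and set $\mu_n=\left(1+\left(n\pi\right)^2\right)^{\alpha/2}$. Expanding the kernel in its first variable, $k\left(s,t\right)=\sum_{n\geq0}a_n\left(t\right)\phi_n\left(s\right)$ with $a_n\left(t\right)=\int_0^1 k\left(s,t\right)\phi_n\left(s\right)\ud s$, one checks by Fubini that $Kf=\sum_{n\geq0}\left\langle a_n,f\right\rangle_{L^2}\phi_n$, so that $K=\sum_{n\geq0}\phi_n\otimes a_n$ is a sum of rank-one operators. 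A rank-one operator $\phi\otimes a$ has nuclear norm $\abs{\phi}_{L^2}\abs{a}_{L^2}$, and the nuclear norm is subadditive; since $\abs{\phi_n}_{L^2}=1$, the partial sums are Cauchy in trace norm as soon as $\sum_{n\geq0}\abs{a_n}_{L^2}<\infty$, in which case their trace-norm limit agrees with $K$ and $K$ is nuclear. Everything therefore reduces to establishing $\sum_{n\geq0}\abs{a_n}_{L^2}<\infty$.

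To control this sum I would insert the weights $\mu_n$ and apply Cauchy--Schwarz:
\[
\sum_{n\geq0}\abs{a_n}_{L^2}\leq\Big(\sum_{n\geq0}\mu_n^{-2}\Big)^{1/2}\Big(\sum_{n\geq0}\mu_n^2\abs{a_n}_{L^2}^2\Big)^{1/2}.
\]
The first factor is $\sum_{n\geq0}\left(1+\left(n\pi\right)^2\right)^{-\alpha}$, which is finite \emph{precisely} because $\alpha>\tfrac12$; this is exactly where the hypothesis on $\alpha$ enters (equivalently, it is the statement that the order-$\alpha$ fractional-integration operator on $L^2\left[0,1\right]$ is Hilbert--Schmidt, so that $K$ factors as a composition of two Hilbert--Schmidt operators). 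For the second factor, interchanging sum and integral gives
\[
\sum_{n\geq0}\mu_n^2\abs{a_n}_{L^2}^2=\int_0^1\Big(\sum_{n\geq0}\mu_n^2\,\abs{\left\langle k\left(\cdot,t\right),\phi_n\right\rangle_{L^2}}^2\Big)\ud t .
\]
Thus the whole argument reduces to the pointwise-in-$t$ Sobolev estimate $\sum_{n\geq0}\mu_n^2\abs{\left\langle h,\phi_n\right\rangle}^2\leq C\,\abs{h}_{W^{\alpha,2}}^2$ (with $h\in L^2$ used to absorb the constant mode $n=0$), after which the hypothesis $\abs{\,\abs{k\left(\cdot,t\right)}_{W^{\alpha,2}}\,}_{L^2\left(\ud t\right)}<\infty$ closes the proof.

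The substantive step---and the main obstacle---is therefore the characterization of the fractional Sobolev norm on $\left[0,1\right]$ through the cosine coefficients, $\sum_{n\geq0}\left(1+\left(n\pi\right)^2\right)^{\alpha}\abs{\left\langle h,\phi_n\right\rangle}^2\asymp\abs{h}_{W^{\alpha,2}}^2$, of which I need only the bound by the Gagliardo seminorm~\eqref{eq:fractional_sobolev_norm} (with $\beta=1+2\alpha$). I would prove it by even reflection: extend $h$ evenly across $0$ and $1$ to a $2$-periodic function $\tilde h$, whose torus Fourier coefficients are exactly the cosine coefficients $\left\langle h,\phi_n\right\rangle$. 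Even reflection is bounded on $W^{\alpha,2}$ for $\alpha<\tfrac32$---and here necessarily $\alpha\in\left(\tfrac12,1\right)$, since the seminorm~\eqref{eq:fractional_sobolev_norm} is already infinite on nonconstant smooth functions once $\alpha\geq1$---so the seminorm of $\tilde h$ is controlled by that of $h$. On the torus the classical identity $\iint\frac{\abs{\tilde h_x-\tilde h_y}^2}{\abs{x-y}^{1+2\alpha}}\ud x\,\ud y=c_\alpha\sum_k\abs{k}^{2\alpha}\,\abs{\hat{\tilde h}_k}^2$ then yields the desired weighted coefficient bound, and Bessel's inequality handles the inhomogeneous part $1$ in $\mu_n^2$ together with the constant mode. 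These are standard facts about $W^{\alpha,2}$ (see~\cite{adams}); the only care required is the admissible range of $\alpha$ and the harmless separate treatment of the $L^2$-part, which is available since $k\in L^2\left(\left[0,1\right]^2\right)$ as needed for $I_2\left(k\right)$ to be defined in the first place.
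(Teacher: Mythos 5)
Your argument is essentially correct, but note that it cannot match ``the paper's proof'' because there is none: Theorem~\ref{theorem:nuclear_sufficient_conditions} is stated in the paper as a special case of a result of Birman and Solomyak~\cite[Theorem~4.1]{nuclear_paper} and is cited without proof. What you give instead is a self-contained proof of exactly this special case, via the classical factorization of a nuclear operator through Hilbert--Schmidt maps: expand $k(\cdot,t)$ in the Neumann cosine basis, write the operator as $\sum_n\phi_n\otimes a_n$, and apply weighted Cauchy--Schwarz so that nuclearity reduces to the summability of $\left(1+\left(n\pi\right)^2\right)^{-\alpha}$ --- which is precisely where $\alpha>\frac12$ enters --- together with the Fourier-coefficient characterization of $W^{\alpha,2}$, obtained by even reflection to the torus; all of these steps are sound in the relevant range $\alpha\in\left(\frac12,1\right)$. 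Two caveats you rightly flag are worth keeping: one must assume $k\in L^2\left(\left[0,1\right]^2\right)$ (otherwise the operator~\eqref{equation:induced_operator} need not even be bounded, e.g.\ $k\left(s,t\right)=g\left(t\right)$ with $g\notin L^2$ satisfies the seminorm hypothesis vacuously), an assumption anyway in force in Theorem~\ref{theorem:approximate_limit_for_nuclear} where the result is used; and the seminorm~\eqref{eq:fractional_sobolev_norm} must be read with the Gagliardo exponent $\beta=1+2\alpha$, so that the content is confined to $\alpha<1$ (for $\alpha\geq 1$ the hypothesis forces $k\left(\cdot,t\right)$ to be a.e.\ constant and the operator is rank one). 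The trade-off between the two routes: the citation buys brevity and generality, since Birman--Solomyak give singular-value estimates under much broader smoothness conditions, while your proof buys transparency --- it exhibits $\alpha>\frac12$ as exactly the Hilbert--Schmidt property of the Sobolev embedding --- and keeps the paper self-contained, at the modest cost of importing standard torus Fourier facts.
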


%%%%%%%%%%%%%%%%%%%%%%%%%%%%%%%%%%%%%%%%%%%%%%%%%%%%%%%%%%%%%%%%%%%%%%%%%%%%%%%%%%%%%%%%%%
%%%%%%%%%%%%%%%%%%%%%%%%%%%%%%%%%%%%%%%%%%%%%%%%%%%%%%%%%%%%%%%%%%%%%%%%%%%%%%%%%%%%%%%%%%
%%%%%%%%%%%%%%%%%%%%%%%%%%%%%%%%%%%%%%%%%%%%%%%%%%%%%%%%%%%%%%%%%%%%%%%%%%%%%%%%%%%%%%%%%%

%%%%%%%%%%%%%%%%%%%%%%%%%%%%%%%%%%%%%%%%%%%%%%%%%%%%%%%%%%%%%%%%%%%%%%%%%%%%%%%%%%%%%%%%%%
%%%%%%%%%%%%%%%%%%%%%%%%%%%%%%%%%%%%%%%%%%%%%%%%%%%%%%%%%%%%%%%%%%%%%%%%%%%%%%%%%%%%%%%%%%
%%%%%%%%%%%%%%%%%%%%%%%%%%%%%%%%%%%%%%%%%%%%%%%%%%%%%%%%%%%%%%%%%%%%%%%%%%%%%%%%%%%%%%%%%%
%%%%%%%%%%%%%%%%%%%%%%%%%%%%%%%%%%%%%%%%%%%%%%%%%%%%%%%%%%%%%%%%%%%%%%%%%%%%%%%%%%%%%%%%%%

%\subsection{The underlying Brownian motion}
%The Hilbert-Schmidt operator 

%%%%%%%%%%%%%%%%%%%%%%%%%%%%%%%%%%%%%%%%%%%%%%%%%%%%%%%%%%%%%%%%%%%%%%%%%%%%%%%%%%%%%%%%%%
%%%%%%%%%%%%%%%%%%%%%%%%%%%%%%%%%%%%%%%%%%%%%%%%%%%%%%%%%%%%%%%%%%%%%%%%%%%%%%%%%%%%%%%%%%
%%%%%%%%%%%%%%%%%%%%%%%%%%%%%%%%%%%%%%%%%%%%%%%%%%%%%%%%%%%%%%%%%%%%%%%%%%%%%%%%%%%%%%%%%%
%%%%%%%%%%%%%%%%%%%%%%%%%%%%%%%%%%%%%%%%%%%%%%%%%%%%%%%%%%%%%%%%%%%%%%%%%%%%%%%%%%%%%%%%%%
%%%%%%%%%%%%%%%%%%%%%%%%%%%%%%%%%%%%%%%%%%%%%%%%%%%%%%%%%%%%%%%%%%%%%%%%%%%%%%%%%%%%%%%%%%
%%%%%%%%%%%%%%%%%%%%%%%%%%%%%%%%%%%%%%%%%%%%%%%%%%%%%%%%%%%%%%%%%%%%%%%%%%%%%%%%%%%%%%%%%%
%%%%%%%%%%%%%%%%%%%%%%%%%%%%%%%%%%%%%%%%%%%%%%%%%%%%%%%%%%%%%%%%%%%%%%%%%%%%%%%%%%%%%%%%%%
%%%%%%%%%%%%%%%%%%%%%%%%%%%%%%%%%%%%%%%%%%%%%%%%%%%%%%%%%%%%%%%%%%%%%%%%%%%%%%%%%%%%%%%%%%

\section{The Onsager-Machlup functional}
\label{section:main_OM}

Before we state the main theorem, we introduce classes of functions in $\mathcal H$ for which the Onsager-Machlup functional will be computed. We fix some $H<\frac12$ throughout.\\
For $0\leq\alpha<H$ we denote by $\mathcal H_{\alpha}$ the set of those $h\in\mathcal H$ that satisfy
\begin{equation}\label{eq:approximate_limit_high_orders_OM}
\limsup_{\epsilon\to 0+}E\left(e^{\delta_1\left(\int_0^{\cdot}g\left(h_t\right)B^m_{1,t}\ud t\right)}|\abs{B}_{C^{\alpha}}<\epsilon\right)\leq 1
\end{equation}
for all $g\in C^1\left(\reals^d\right)$ bounded with bounded derivative and $m=1,\ldots,\left[\frac{1}{2H-2\alpha}\right]$.

The condition~\eqref{eq:approximate_limit_high_orders_OM} can be difficult to check. We will prove below (see Lemma~\ref{lemma:approximate_limit_nuclear_OM}) that~\eqref{eq:approximate_limit_high_orders_OM} holds for $m=1$ for any $h\in\mathcal H$. In particular, $\mathcal H_{\alpha}=\mathcal H$ if $\alpha<H-\frac14$. We conjecture that this in fact the case for any $\alpha<H$, but we have not been able to prove it.

We can now state our main result:

\begin{thm}\label{theorem:OM_functional}
Let $\left(B_t\right)_{t\in\left[0,1\right]}$ be a $d$-dimensional fractional Brownian motion with Hurst parameter $H\in\left(\frac14,\frac12\right)$. Suppose that $\left(X_t\right)_{t\in\left[0,1\right]}$ is a solution of SDE~\eqref{SDE} where $b\in C^2_b\left(\reals^d,\reals^d\right)$. Then the Onsager-Machlup functional~\eqref{OM_def} with respect to the supremum norm and the $\alpha$-H\"{o}lder norms where $\alpha\in\left(0,H-\frac14\right)$ exists for any $h\in\mathcal H$, and it is given by
\begin{equation}\label{main_OM_functional}
J\left(h\right)=-\frac12\left(\abs{h-\int_0^{\cdot}b\left(h_s\right)\ud s}^2_{\mathcal H}+\int_0^1\DIV b\left(h_t\right)\ud t\right).
\end{equation}
Moreover, if $H\in\left(0,\frac12\right)$, $\norm{\cdot}$ is the supremum norm or the $\alpha$-H\"{o}lder norm where $\alpha\in\left(0,H\right)$ and $b\in C^{k+1}_b\left(\reals^d,\reals^d\right)$ for $k>\frac{1}{2H-2\alpha}-1$, then the Onsager-Machlup functional exists for all $h\in\mathcal H_{\alpha}$ and it is given by~\eqref{main_OM_functional}.\\
\end{thm}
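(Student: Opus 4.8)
The plan is to strip the drift from~\eqref{SDE} by the Girsanov-type Theorem~\ref{Girsanov} and reduce the ratio in~\eqref{OM_def} to a single conditional expectation under the law of the driving fractional Brownian motion. Because $b\in C^2_b$ (and $C^{k+1}_b$ in general), equation~\eqref{SDE} has a unique pathwise solution $X$, and $u_t:=h_t-\int_0^tb\left(X_s\right)\ud s$ is an adapted, $\mathcal H$-valued process whose norm is \emph{deterministically} bounded, $\abs{u}_{\mathcal H}\leq\abs{h}_{\mathcal H}+C\abs{b}_{\infty}$, by~\eqref{eq:H_norm_for_C1}. Hence the Novikov condition~\eqref{novikov} is immediate, Theorem~\ref{Girsanov} applies, and under the measure $Q$ of~\eqref{girsanov_new_measure} the process $B-u=X-h$ is a fractional Brownian motion. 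Writing the indicator of $\left\{\norm{X-h}<\epsilon\right\}$ against $\ud P=e^{-\delta\left(u\right)+\frac12\abs{u}_{\mathcal H}^2}\ud Q$, carrying out the ordinary Girsanov change for the underlying Brownian motion $W$ (which replaces $-\delta\left(u\right)+\frac12\abs{u}_{\mathcal H}^2$ by $-\delta\left(u\right)-\frac12\abs{u}_{\mathcal H}^2$ once everything is expressed through $X-h$), and using that $X-h$ has under $Q$ the same law as $B$ under $P$, I obtain
\begin{equation*}
\frac{P\left(\norm{X-h}<\epsilon\right)}{P\left(\norm{B}<\epsilon\right)}=E\left(e^{-\delta\left(u\right)-\frac12\abs{u}_{\mathcal H}^2}\MID\norm{B}<\epsilon\right),\qquad u=h-\int_0^{\cdot}b\left(B_s+h_s\right)\ud s,
\end{equation*}
where now $B$ is a fractional Brownian motion and $\delta$ is its divergence (the It\^{o} integral against $W$). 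It remains to compute the limit of the right-hand side as $\epsilon\searrow0$.

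\textbf{Chaos expansion.} To that end I would Taylor-expand $b\left(B_s+h_s\right)$ around $h_s$. With $\phi:=\int_0^{\cdot}b\left(h_s\right)\ud s$ and $v_m:=\frac{1}{m!}\int_0^{\cdot}\nabla^mb\left(h_s\right)\left[B_s^{\otimes m}\right]\ud s$, the exponent $-\delta\left(u\right)-\frac12\abs{u}_{\mathcal H}^2$ splits into four kinds of terms: the deterministic term $-\frac12\abs{h-\phi}_{\mathcal H}^2$, which is exactly the first summand of~\eqref{main_OM_functional}; the first-chaos term $-\delta\left(h-\phi\right)$; the second-chaos term $\delta\left(v_1\right)$; and the remainders $\delta\left(v_m\right)$ for $m\geq2$ together with the cross terms $\left\langle h-\phi,v_m\right\rangle_{\mathcal H}$ and $\left\langle v_m,v_{m'}\right\rangle_{\mathcal H}$. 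On $\left\{\norm{B}<\epsilon\right\}$ one has $\abs{v_m}_{\mathcal H}\leq C\abs{B}_{\infty}^m\leq C\epsilon^m$ by~\eqref{eq:H_norm_for_C1}, so every term built only from the $\mathcal H$-pairings of the $v_m$ is deterministically $O\left(\epsilon\right)$ on the ball and can be peeled off the exponential with error tending to $1$; this, together with the separation of the surviving factors by H\"{o}lder's inequality, is the content of the auxiliary Lemmas~\ref{lemma:Girsanov_argument} and~\ref{exp_terms_lemma}. The genuine first-chaos factor $E\left(e^{-\delta\left(h-\phi\right)}\MID\norm{B}<\epsilon\right)$ is \emph{not} controlled by the norm; it is shown to converge to $1$ via the symmetry $B\mapsto-B$ of the conditioning event together with the Cameron-Martin formula and the shifted small-ball estimate $P\left(\norm{B+g}<\epsilon\right)/P\left(\norm{B}<\epsilon\right)\to e^{-\frac12\abs{g}_{\mathcal H}^2}$.

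\textbf{The second-chaos term.} The technical heart is the surviving second-chaos factor. Writing $\delta\left(v_1\right)=I_2\left(\tilde k\right)$, I would verify that the symmetric kernel $\tilde k$ induces a \emph{nuclear} operator on $L^2\left[0,1\right]$ through the fractional-Sobolev criterion of Theorem~\ref{theorem:nuclear_sufficient_conditions}; this is precisely the step where the constraint $\alpha<H-\frac14$ (and the measurability of $\norm{\cdot}$ in the sense of Definition~\ref{definition:measurable_norm}) is used, since the admissible Sobolev exponent of $\tilde k$ degrades as $\alpha\uparrow H$. Harg\'{e}'s Theorem~\ref{theorem:approximate_limit_for_nuclear} then yields $E\left(e^{\delta\left(v_1\right)}\MID\norm{B}<\epsilon\right)\to e^{-\Tr\left(\tilde k\right)}$, and a direct computation of the trace gives $\Tr\left(\tilde k\right)=\frac12\int_0^1\DIV b\left(h_t\right)\ud t$ — the trace of the Jacobian $\nabla b\left(h_t\right)$ being exactly $\DIV b\left(h_t\right)$ — which is the second summand of~\eqref{main_OM_functional}. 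Multiplying the three surviving contributions gives $e^{J\left(h\right)}$ with $J$ as claimed. When $\alpha<H-\frac14$ we have $\left[\frac{1}{2H-2\alpha}\right]=1$, so $m=1$ is the only marginal order and Lemma~\ref{lemma:approximate_limit_nuclear_OM} carries it for \emph{every} $h\in\mathcal H$; this proves the first assertion.

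\textbf{General case and main obstacle.} For the general range $H<\frac12$, $\alpha<H$, the difficulty is the remaining remainders $\delta\left(v_m\right)$ with $2\leq m\leq\left[\frac{1}{2H-2\alpha}\right]$. These live in Wiener chaoses of order exceeding $2$, so Harg\'{e}'s $I_2$-theorem no longer applies, and — unlike the cross terms — they are \emph{not} dominated by a positive power of $\norm{B}$ on the ball, so they cannot be discarded by norm bounds. Controlling them is exactly what the defining condition~\eqref{eq:approximate_limit_high_orders_OM} of $\mathcal H_{\alpha}$ is designed for: it forces each such factor to contribute $1$ in the limit. Choosing $b\in C^{k+1}_b$ with $k>\frac{1}{2H-2\alpha}-1$ ensures the expansion runs one order past the threshold $\left[\frac{1}{2H-2\alpha}\right]$, so that the final Taylor remainder carries a high enough power of $B$ to vanish on the small ball. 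Thus the main obstacle — and the reason the general statement remains conditional — is the control of these marginal higher-chaos terms; granting $h\in\mathcal H_{\alpha}$ neutralises them, and the same three-part assembly delivers~\eqref{main_OM_functional}.
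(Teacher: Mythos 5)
Your overall architecture is the same as the paper's: a Girsanov reduction to a single conditional expectation (the content of Lemma~\ref{lemma:Girsanov_argument}), a Taylor expansion of $b$ around $h$, a term-by-term analysis justified by Lemma~\ref{exp_terms_lemma}, pathwise domination of the $\mathcal H$-pairing terms, nuclearity plus Harg\'{e}'s Theorem~\ref{theorem:approximate_limit_for_nuclear} for the trace term, and the condition \eqref{eq:approximate_limit_high_orders_OM} for the marginal orders. However, two of your steps have genuine gaps, and they are precisely the two places where the paper has to work hardest. The first concerns the Taylor remainder and the role of $\alpha<H-\frac14$. You correctly note that divergence terms are \emph{not} dominated pathwise by $\norm{B}$, but you then dismiss the remainder $\delta\left(\int_0^\cdot R\left(t,B_t\right)\ud t\right)$ on the grounds that it ``carries a high enough power of $B$ to vanish on the small ball'' --- which is exactly the pathwise reasoning you had just ruled out. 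The paper's treatment of the term \eqref{Z_5} is genuinely stochastic: it rewrites the divergence as an It\^{o} integral $\int_0^1 K^{-1}\left[\cdot\right]\cdot\ud W$, bounds its bracket by $\tilde{M}\epsilon^{2k+2}$ on the ball using \eqref{eq:H_norm_for_C1} and \eqref{remainder_bound}, time-changes to a Brownian motion, and compares the resulting Gaussian tail $e^{-x^2/\left(2\tilde{M}\epsilon^{2k+2}\right)}$ against the small-ball \emph{lower} bound $P\left(\norm{B}<\epsilon\right)\geq e^{-c\epsilon^{-1/\left(H-\alpha\right)}}$ from Theorems~\ref{theorem:small_ball_probabilities_supremum_fBm} and~\ref{theorem:small_ball_probabilities_holder_fBm}. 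This comparison is where $2k+2>\frac{1}{H-\alpha}$, i.e. $k>\frac{1}{2H-2\alpha}-1$, is used, and for $k=1$ it is exactly $\alpha<H-\frac14$. Your proposal never invokes small-ball asymptotics at all, and it misattributes the constraint $\alpha<H-\frac14$ to the nuclearity step: the Sobolev criterion in Lemma~\ref{lemma:differentiability_and_trace_of_processes} (exponent $H+\frac12>\frac12$) works for \emph{every} $\alpha<H$ and every $h\in\mathcal H$, which is precisely why the trace formula survives unchanged in the general case.

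The second gap is the multidimensional combinatorics in the ``moreover'' part. The condition \eqref{eq:approximate_limit_high_orders_OM} defining $\mathcal H_{\alpha}$ only controls pure powers of a single component, $\delta_1\left(\int_0^\cdot g\left(h_t\right)B_{1,t}^m\ud t\right)$. The Taylor terms of order $2\leq m\leq k$ also produce mixed monomials $\delta_1\left(\int_0^\cdot g\left(h_t\right)B_1^rB_{i_1}^{r_1}\cdots B_{i_q}^{r_q}\ud t\right)$, which are covered neither by \eqref{eq:approximate_limit_high_orders_OM} nor by norm bounds; so ``granting $h\in\mathcal H_{\alpha}$ neutralises them'' is not true as stated. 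The paper needs a dedicated induction here: a polarization identity turns $B_1^rB_{i_1}^{r_1}$ into powers of $B_1\cos\theta-B_{i_1}\sin\theta$, rotational invariance of the fractional Brownian motion and of the norm turns those into pure powers of the first component of a rotated fBm, and the terms whose components are independent of the divergence index are killed by the independence argument of Lemma~\ref{exponential_limit_independent}. Relatedly, you fold the off-diagonal first-order terms \eqref{Z_3}, $\delta^i\left(\int_0^\cdot b'_{ij}\left(h_t\right)B^j_t\ud t\right)$ with $i\neq j$, into a single application of Harg\'{e}'s theorem to the full matrix kernel of $\delta\left(v_1\right)$; your trace value is right (only diagonal entries contribute), but Theorem~\ref{theorem:approximate_limit_for_nuclear} as available is stated for scalar symmetric kernels, so these mixed second-chaos terms again require either the independence lemma or a genuinely multidimensional version of Harg\'{e}'s result that you would have to prove. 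A smaller point: your first-chaos term is handled by citing the shifted small-ball limit $P\left(\norm{B+g}<\epsilon\right)/P\left(\norm{B}<\epsilon\right)\to e^{-\frac12\abs{g}_{\mathcal H}^2}$; this is a known Gaussian fact, but via Cameron-Martin it is \emph{equivalent} to the statement you need, and the paper instead derives it from the Gaussian correlation inequality --- so this step is acceptable only as an explicit citation of external literature, not as a proof.
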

%%%%%%%%%%%%%%%%%%%%%%%%%%%%%%%%%%%%%%%%%%%%%%%%%%%%%%%%%%%%%%%%%%%%
%%%%%%%%%%%%%%%%%%%%%%%%%%%%%%%%%%%%%%%%%%%%%%%%%%%%%%%%%%%%%%%%%%%%

The rest of this section is dedicated to the proof of Theorem~\ref{theorem:OM_functional}. We defer the proofs of some lemmas and some other details to Section~\ref{section:proofs_of_lemmas}.

From now on, denote $\norm{\cdot}:=\abs{\cdot}_{C^{\alpha}}$ (for the $\alpha$ from Theorem~\ref{theorem:OM_functional}) and $E_{\epsilon}:=E\left(\cdot\big|\norm{B}<\epsilon\right)$.

%%%%%%%%%%%%%%%%%%%%%%%%%%%%%%%%%%%%%%%%%%%%%%%%%%%%%%%%%%%%%%%%%%%%
%%%%%%%%%%%%%%%%%%%%%%%%%%%%%%%%%%%%%%%%%%%%%%%%%%%%%%%%%%%%%%%%%%%%

We begin by taking advantage of the Girsanov theorem to reduce the computation of the Onsager-Machlup functional to the computation of approximate limits. This is usually the opening point in Onsager-Machlup computations (see~\cite{Ikeda_Watanabe} and~\cite{Onsager-Machlup}).
\begin{lemma}\label{lemma:Girsanov_argument}
	Let $\Phi\in\mathcal H$. Denote
	\begin{align}
	&A_1=\frac{1}{2}\norm{\int_0^\cdot b\left(\Phi_t\right)\ud t}^2_\rkhs-\frac{1}{2}\norm{\int_0^\cdot b\left(\Phi_t+B_t\right)\ud t}^2_\rkhs ,\\
	&A_2=-\delta\left(\Phi\right) ,\\
	&A_3=\left<\int_0^\cdot b\left(\Phi_t+B_t\right)\ud t-\int_0^\cdot b\left(\Phi_t\right)\ud t\ \ \text{\LARGE $,$ }\Phi\right>_\rkhs,\\
	&A_4=\delta\left(\int_0^\cdot b\left(\Phi_t+B_t\right)\ud t\right)+\frac{d_H}{2}\int_0^1\nabla\cdot b\left(\Phi_s\right)\ud s .
	\end{align}
	If
	\begin{equation}\label{main_exp_limit}
	\lim_{\epsilon\to 0}E\left(e^{A_1+A_2+A_3+A_4}\mid\norm{B}<\epsilon\right)=1
	\end{equation}
	then the statement in Equation~\eqref{main_OM_functional} follows.
\end{lemma}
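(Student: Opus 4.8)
The plan is to use a single Girsanov change of measure to turn the numerator $P\left(\norm{X-\Phi}<\epsilon\right)$ into a small-ball probability for a fresh fractional Brownian motion, and then to read off $J\left(\Phi\right)$ as the deterministic part of the resulting Radon--Nikodym exponent, with $A_1+A_2+A_3+A_4$ being exactly the random remainder.

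First I would set $u_t=\Phi_t-\int_0^t b\left(X_s\right)\ud s$, so that $\tilde B:=B-u=X-\Phi$. The process $\int_0^\cdot b\left(X_s\right)\ud s$ is differentiable, vanishes at $0$, and has bounded derivative (as $b\in C^2_b$), so~\eqref{eq:H_norm_for_C1} yields the deterministic bound $\abs{u}_\rkhs\leq\abs{\Phi}_\rkhs+C\abs{b}_\infty$; in particular the Novikov condition~\eqref{novikov} holds trivially and Theorem~\ref{Girsanov} applies. Hence, under $Q$ given by~\eqref{girsanov_new_measure}, $\tilde B$ is a $d$-dimensional fractional Brownian motion and
\[
P\left(\norm{X-\Phi}<\epsilon\right)=E_Q\left[e^{-\delta\left(u\right)+\frac12\abs{u}^2_\rkhs}\mathbf 1_{\norm{\tilde B}<\epsilon}\right].
\]
Since the law of $\tilde B$ under $Q$ equals the law of $B$ under $P$, once the exponent is rewritten as a functional of $\tilde B$ alone I may replace $E_Q\left[\,\cdot\,\left(\tilde B\right)\right]$ by $E_P\left[\,\cdot\,\left(B\right)\right]$ and divide by $P\left(\norm{B}<\epsilon\right)$, obtaining the conditional expectation $E_\epsilon$.

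The core algebraic step is a trace-free transfer of the divergence. Writing $u=\Phi-v$ with $v=\int_0^\cdot b\left(\Phi_t+\tilde B_t\right)\ud t$, I would invoke Theorem~\ref{theorem:relation_between_delta_and_Ito_integral_OM} to represent $\delta\left(u\right)=\int_0^1\left(K^{-1}u\right)_s\cdot\ud W_s$ as an It\^o integral against the Brownian motion $W$ underlying $B$. Because $u$ is adapted, $K^{-1}u$ is adapted, and the shift $B=\tilde B+u$ corresponds at the level of $W$ to $W=\tilde W+K^{-1}u$; an adapted It\^o integral transfers under this shift with no diagonal correction, giving the key identity $\delta\left(u\right)=\delta_{\tilde B}\left(u\right)+\abs{u}^2_\rkhs$. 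The exponent $-\delta\left(u\right)+\frac12\abs{u}^2_\rkhs$ then reduces to $-\delta_{\tilde B}\left(u\right)-\frac12\abs{u}^2_\rkhs$, and with $u=\Phi-v$ and $-\frac12\abs{u}^2_\rkhs=-\frac12\abs{\Phi}^2_\rkhs+\langle\Phi,v\rangle_\rkhs-\frac12\abs{v}^2_\rkhs$ it becomes $-\delta_{\tilde B}\left(\Phi\right)+\delta_{\tilde B}\left(v\right)-\frac12\abs{\Phi}^2_\rkhs+\langle\Phi,v\rangle_\rkhs-\frac12\abs{v}^2_\rkhs$. Introducing the deterministic $w=\int_0^\cdot b\left(\Phi_t\right)\ud t$ and regrouping, the $w$-terms cancel among $J\left(\Phi\right)$, $A_1$ and $A_3$ (completing the square $-\frac12\abs{\Phi-w}^2_\rkhs$), and what remains matches $J\left(\Phi\right)+A_1+A_2+A_3+A_4$ provided the two deterministic contributions $-\frac12\int_0^1\DIV b\left(\Phi_s\right)\ud s$ inside $J\left(\Phi\right)$ and $+\frac{d_H}{2}\int_0^1\nabla\cdot b\left(\Phi_s\right)\ud s$ inside $A_4$ cancel, i.e.\ provided $d_H=1$.

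Putting these together gives $P\left(\norm{X-\Phi}<\epsilon\right)/P\left(\norm{B}<\epsilon\right)=e^{J\left(\Phi\right)}E_\epsilon\left[e^{A_1+A_2+A_3+A_4}\right]$, so the hypothesis~\eqref{main_exp_limit} yields~\eqref{main_OM_functional} at once. I expect the main obstacle to be the rigorous justification of the trace-free transfer $\delta\left(u\right)=\delta_{\tilde B}\left(u\right)+\abs{u}^2_\rkhs$: one must check that $u$ (equivalently $K^{-1}u$) is genuinely adapted and square-integrable so that Theorem~\ref{theorem:relation_between_delta_and_Ito_integral_OM} applies, and that the It\^o integral acquires no trace term under the Girsanov shift. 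This is precisely where the normalisation $d_H=1$ is forced; note that the genuine $\int_0^1\DIV b$ appearing in the final functional does \emph{not} originate from this exact identity (where it cancels), but will instead emerge later as the trace in the approximate-limit analysis of $A_4$.
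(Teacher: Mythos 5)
Your proof is correct, and the algebraic identity you arrive at --- the Girsanov exponent equals $J\left(\Phi\right)+A_1+A_2+A_3+A_4$, forcing $d_H=1$ --- is exactly the paper's; but you run Girsanov in the opposite direction, and the two routes trade different technical burdens. The paper shifts \emph{forward}: it sets $u_t=\int_0^t b\left(\Phi_s+B_s\right)\ud s-\Phi_t$, a functional of $B$ alone, so that $\left(Q,B-u,B+\Phi\right)$ is a weak solution of~\eqref{SDE}; uniqueness in law (available since $b$ is Lipschitz) then gives $P\left(\norm{X-\Phi}<\epsilon\right)=Q\left(\norm{B}<\epsilon\right)=E\left(\one_{\left\{\norm{B}<\epsilon\right\}}e^{\delta u-\frac12\norm{u}^2_\rkhs}\right)$, and since both the density and the conditioning event are functionals of $B$ under $P$, no stochastic integral ever has to be transported across the change of measure. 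You instead shift \emph{backward}, killing the drift of $X$ with $u_t=\Phi_t-\int_0^t b\left(X_s\right)\ud s$; this dispenses with uniqueness in law entirely (Girsanov alone makes $\tilde B=X-\Phi$ a $Q$-fBm), but it obliges you to re-express the $P$-divergence $\delta\left(u\right)$ as a functional of $\tilde B$, which is your identity $\delta\left(u\right)=\delta_{\tilde B}\left(u\right)+\abs{u}^2_\rkhs$. That identity is sound: $K^{-1}u$ is adapted ($K^{-1}$ is a causal Volterra operator and $X$ is the strong solution) with $\abs{K^{-1}u}_{L^2}=\abs{u}_\rkhs$ deterministically bounded, so Theorem~\ref{theorem:relation_between_delta_and_Ito_integral_OM} applies under both measures and the shift $W=\tilde W+\int_0^\cdot\left(K^{-1}u\right)_s\ud s$ produces the cross term $\abs{u}^2_\rkhs$ with no further correction. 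Your closing observations --- that $d_H=1$ is forced by the cancellation, and that the $\int_0^1\DIV b\left(\Phi_s\right)\ud s$ surviving in $J$ comes not from this identity but from the trace in the later treatment of $A_4$ --- both match the paper. The one step you should spell out is the replacement of $E_Q$ of a functional of $\tilde B$ by $E_P$ of the same functional of $B$: your exponent contains It\^{o} integrals, which are defined only up to null sets, so you need them realised as path functionals determined consistently by the law (e.g. as limits in probability of Riemann sums of adapted integrands); this is standard, and comparable in weight to the uniqueness-in-law step that the paper itself leaves implicit.
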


%%%%%%%%%%%%%%%%%%%%%%%%%%%%%%%%%%%%%%%%%%%%%%%%%%%%%%%%%%%%%%%%%%%%
%%%%%%%%%%%%%%%%%%%%%%%%%%%%%%%%%%%%%%%%%%%%%%%%%%%%%%%%%%%%%%%%%%%%

The following Lemma allows us to consider each term separately, which further simplifies the approximate limits to be considered. It was first stated and proved in~\cite{Ikeda_Watanabe} (see p. 536).
\begin{lemma}\label{exp_terms_lemma}
	If $\limsup_{\epsilon\to 0}E\left(e^{cA_i}\mid\norm{B}<\epsilon\right)\leq 1$ for all $c\in\reals$ and $i=1,\ldots,n$, then
	\begin{equation*}
	\lim_{\epsilon\to 0}E\left(e^{A_1+\cdots+A_n}\mid\norm{B}<\epsilon\right)=1
	\end{equation*}
\end{lemma}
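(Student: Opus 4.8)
The plan is to establish the limit by a two-sided estimate, bounding $E\left(e^{A_1+\cdots+A_n}\mid\norm{B}<\epsilon\right)$ from above by $1$ in the $\limsup$ and from below by $1$ in the $\liminf$. For the upper bound I would apply the generalised Hölder inequality to the conditional expectation with all exponents equal to $n$: since $\sum_{i=1}^n\tfrac1n=1$,
\[
E\left(e^{A_1+\cdots+A_n}\mid\norm{B}<\epsilon\right)\leq\prod_{i=1}^n E\left(e^{nA_i}\mid\norm{B}<\epsilon\right)^{1/n}.
\]
All factors are nonnegative, so the $\limsup$ of the product is at most the product of the $\limsup$s; taking $\limsup_{\epsilon\to0}$ and invoking the hypothesis with $c=n$, each factor has $\limsup$ at most $1$, whence $\limsup_{\epsilon\to0}E\left(e^{A_1+\cdots+A_n}\mid\norm{B}<\epsilon\right)\leq1$.

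For the lower bound, the key reduction is that it suffices to show $E\left(A_i\mid\norm{B}<\epsilon\right)\to0$ as $\epsilon\to0$ for each $i$. Granting this, Jensen's inequality applied to the convex function $\exp$ under the conditional probability gives $E\left(e^{A_1+\cdots+A_n}\mid\norm{B}<\epsilon\right)\geq\exp\left(\sum_{i=1}^n E\left(A_i\mid\norm{B}<\epsilon\right)\right)$, whose $\liminf$ is then at least $e^0=1$; combined with the upper bound this yields the claim. The hypothesis with $c=\pm1$ guarantees $E\left(e^{\pm A_i}\mid\norm{B}<\epsilon\right)<\infty$ for small $\epsilon$, so each $A_i$ is conditionally integrable and $E\left(A_i\mid\norm{B}<\epsilon\right)$ is well defined, legitimising every Jensen step.

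To prove $E\left(A_i\mid\norm{B}<\epsilon\right)\to0$ I would squeeze it between $0$ from above and below using Jensen once more. For a fixed $c>0$, Jensen gives $c\,E\left(A_i\mid\norm{B}<\epsilon\right)\leq\log E\left(e^{cA_i}\mid\norm{B}<\epsilon\right)$; since $\log$ is continuous and increasing, taking $\limsup$ and using the hypothesis yields $c\limsup_{\epsilon\to0}E\left(A_i\mid\norm{B}<\epsilon\right)\leq\log1=0$, hence $\limsup_{\epsilon\to0}E\left(A_i\mid\norm{B}<\epsilon\right)\leq0$. Repeating with a fixed $c<0$ (dividing by the negative $c$ reverses the inequality and interchanges $\limsup$ with $\liminf$) gives $\liminf_{\epsilon\to0}E\left(A_i\mid\norm{B}<\epsilon\right)\geq0$, and the two bounds together force the limit to be $0$.

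The argument is almost entirely routine; the only point requiring a little care is the lower bound, specifically the bookkeeping of signs and the interchange of $\log$ with $\limsup$/$\liminf$ when dividing by $c<0$, together with the integrability remarks that make the conditional means finite. There is no genuine analytic obstacle: the content of the lemma is the elementary pairing of Hölder's inequality for the upper bound with Jensen's inequality for the lower bound, and it is this decoupling that lets us later treat each $A_i$ from Lemma~\ref{lemma:Girsanov_argument} in isolation.
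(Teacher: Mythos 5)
Your proof is correct. The upper bound is exactly the one in the paper's source for this lemma (the paper does not reprove it but cites Ikeda--Watanabe, p.~536): generalised H\"older with all exponents equal to $n$, plus the hypothesis at $c=n$. Where you genuinely diverge is the lower bound. The classical argument sets $S=A_1+\cdots+A_n$ and uses Cauchy--Schwarz under the conditional measure,
\begin{equation*}
1=E\left(e^{S/2}e^{-S/2}\mid\norm{B}<\epsilon\right)\leq E\left(e^{S}\mid\norm{B}<\epsilon\right)^{1/2}E\left(e^{-S}\mid\norm{B}<\epsilon\right)^{1/2},
\end{equation*}
then applies the same H\"older upper-bound step to $-S$ (legitimate because the hypothesis holds for all $c\in\reals$, in particular negative $c$), obtaining $\limsup_{\epsilon\to0}E\left(e^{-S}\mid\norm{B}<\epsilon\right)\leq1$ and hence $\liminf_{\epsilon\to0}E\left(e^{S}\mid\norm{B}<\epsilon\right)\geq1$. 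You instead prove $E\left(A_i\mid\norm{B}<\epsilon\right)\to0$ by applying Jensen at positive and negative $c$, and then invoke Jensen once more to bound the liminf. Both routes are sound. The Cauchy--Schwarz trick is shorter and manipulates only expectations of non-negative random variables, so no integrability ever needs to be checked; your route requires the conditional integrability of each $A_i$, which you correctly secure (via $\abs{x}\leq e^{x}+e^{-x}$ and the hypothesis at $c=\pm1$), and in exchange it yields the slightly stronger by-product that the conditional first moments $E\left(A_i\mid\norm{B}<\epsilon\right)$ vanish as $\epsilon\to0$, not merely that the exponential moments converge to $1$.
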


%%%%%%%%%%%%%%%%%%%%%%%%%%%%%%%%%%%%%%%%%%%%%%%%%%%%%%%%%%%%%%%%%%%%
%%%%%%%%%%%%%%%%%%%%%%%%%%%%%%%%%%%%%%%%%%%%%%%%%%%%%%%%%%%%%%%%%%%%

We will show that $A_1$, $A_2$, $A_3$ and $A_4$ all satisfy the assumption in Lemma~\ref{exp_terms_lemma}.

For $A_1$ and $A_3$, it will suffice to show that
\begin{equation}\label{dominance}
\norm{\int_0^\cdot b\left(\Phi_t+B_t\right)\ud t-\int_0^\cdot b\left(\Phi_t\right)\ud t}_{\rkhs}\leq C\norm{B}
\end{equation}
for some constant $C>0$, since this implies that
\begin{equation*}
\Ee e^{cA_i}\leq e^{cC\epsilon},\ \ i=1,3.
\end{equation*}
\begin{proof}[Proof of~\eqref{dominance}]
We will once again use Inequality~\eqref{eq:H_norm_for_C1}. Setting
\begin{equation*}
\Psi_s=\int_0^s \left[b\left(\Phi_t+B_t\right)-b\left(\Phi_t\right)\right]\ud t,
\end{equation*}
we have
\begin{equation*}
\Psi'_s=b\left(\Phi_s+B_s\right)-b\left(\Phi_s\right)
\end{equation*}
which is bounded. Therefore
\begin{equation}
\norm{\Psi}_\rkhs\leq \norm{b\left(\Phi_s+B_s\right)-b\left(\Phi_s\right)}_\infty\leq \norm{b'}_\infty\norm{B}_\infty\leq \norm{b'}_\infty\norm{B}.
\end{equation}
This completes the proof.
\end{proof}

The term corresponding to $A_2$ will be treated in a manner inspired by~\cite{Shepp_Zeitouni}. That paper dealt with the one dimensional Brownian case; we will prove that
\begin{equation}\label{deterministic_integrand}
\lim_{\epsilon\to 0}E\left(e^{\delta\Psi}\mid\norm{B}<\epsilon\right)=1,\ \ \Psi\in\rkhs
\end{equation}
for the multidimensional fractional Brownian motion. 
\begin{proof}[Proof of~\eqref{deterministic_integrand}]
Under the conditioning $\norm{B}<\epsilon$ we have that
\[ -\epsilon<B_s^1<\epsilon,\ \ 0\leq s\leq 1; \]
thus, for any $c\in\reals$,
\[ e^{-\abs{c}\epsilon}<E\left(e^{cB_s^1}\mid \norm{B}<\epsilon\right)<e^{\abs{c}\epsilon}. \]
This proves the basic property
\begin{equation}\label{norm_exp_property}
\lim_{\epsilon\to 0}E\left(e^{cB_s^1}\mid\norm{B}<\epsilon\right)=1,\ \ \forall c\in\reals,\ 0\leq s\leq 1.
\end{equation}
We now make the same remark that was made in~\cite{Shepp_Zeitouni}: by Jensen's inequality and symmetry,
\begin{equation*}
E\left(e^{\delta\Psi}\mid\norm{B}<\epsilon\right)\geq 1,\ \ \forall\epsilon>0.
\end{equation*}
We therefore want to prove the inverse limit inequality.

If $\Psi=\sum_{i=1}^ka_iR\left(t_i,\cdot\right)$ then $\delta\Psi=\sum_{i=1}^ka_i\cdot B_{t_i}$. Thus from Lemma~\ref{exp_terms_lemma} and from~\eqref{norm_exp_property},
\begin{equation}\label{simple_case_deterministic_integrand_proof}
\lim_{\epsilon\to 0}E\left(e^{\delta\Psi}\mid\norm{B}<\epsilon\right)=1.
\end{equation}
Suppose now that $\Psi\in\rkhs$, and let $\eta>0$. Then there is some $\Psi_0$ which is a finite linear combination as above and such that $\norm{\Psi-\Psi_0}_\rkhs<\eta$. By the Cauchy-Schwartz inequality,
\begin{equation}\label{approximation_deterministic_integrand_proof}
E\left(e^{\delta\Psi}\mid\norm{B}<\epsilon\right)\leq \sqrt{\Ee e^{2\delta\Psi_0}} \sqrt{\Ee e^{2\delta\left(\Psi-\Psi_{0}\right)}}.
\end{equation}
The first term above tends to $1$ as $\epsilon\to 0$ by~\eqref{simple_case_deterministic_integrand_proof}. For the second term, we will prove that
\begin{equation}\label{expectation_bound_H_norm_deterministic_integrand_proof}
\Ee e^{\delta\Phi}\leq e^{\frac{1}{2}\norm{\Phi}^2_\rkhs},\ \ \forall \Phi\in\rkhs.
\end{equation}
Together with~\eqref{approximation_deterministic_integrand_proof}, we will then have
\begin{equation}
\limsup_{\epsilon\to 0}\Ee e^{\delta\Psi}\leq e^{\frac{\eta^2}{2}},\ \ \forall \eta>0.
\end{equation}
This completes the proof.

To prove~\eqref{expectation_bound_H_norm_deterministic_integrand_proof}, fix some $\Phi\in\rkhs$ and let $\left(\Phi_n\right)_{n=1}^{\infty}$ be some orthonormal basis of $\rkhs$ such that $\Phi_1=\frac{\Phi}{\norm{\Phi}_\rkhs}$. We will make use of the (a.s.) representation $B=\sum_{n=1}^{\infty}X_n\Phi_n$, where $X_n:=\delta\Phi_n$, and of a conclusion from~\cite[Theorem~2.1]{GEOPSS}: denote
\begin{equation}
C=\left\{\mathbf{x}\in\reals^\infty \left| \norm{\sum_{n=1}^{\infty}x_n\Phi_n}<\epsilon \right.\right\}.
\end{equation}
The norm $\norm{\cdot}$ is the usual H\"{o}lder norm that we've been using. This defines a convex symmetric set. It was shown in~\cite{Shepp_Zeitouni} that~\cite[Theorem~2.1]{GEOPSS} then implies
\begin{equation}\label{infinite_gaussian_correlation_inequality_deterministic_integrand_proof}
P\left(\abs{X_1}\leq h\right)P\left(\mathbf{X}\in C\right)\leq P\left(\abs{X_1}\leq h,\mathbf{X}\in C\right).
\end{equation}
Indeed, from~\cite[Theorem~2.1]{GEOPSS} we have~\eqref{infinite_gaussian_correlation_inequality_deterministic_integrand_proof} for the projection of $\mathbf{X}$ and $C$ onto $N$-dimensional space; taking $N\to\infty$ by means of the monotone convergence theorem implies~\eqref{infinite_gaussian_correlation_inequality_deterministic_integrand_proof}. We may now deduce the inequality
\begin{equation}
P\left(\abs{\frac{\delta\Phi}{\norm{\Phi}_\rkhs}}\leq h\right)\leq P\left(\left.\abs{\frac{\delta\Phi}{\norm{\Phi}_\rkhs}}\leq h\right|\norm{B}<\epsilon\right),\ \ \ \forall h\geq 0.
\end{equation}
From this we arrive at 
\begin{equation}
P_\epsilon \left(e^{\abs{\delta\Phi}}> h\right)\leq P \left(e^{\abs{\delta\Phi}}> h\right),\ \ \ \forall h\geq 0,
\end{equation}
which readily implies the inequality
\begin{equation}
\Ee e^{\delta\Phi}\leq\Ee e^{\abs{\delta\Phi}}\leq E e^{\abs{\delta\Phi}}=e^{\frac{1}{2}\norm{\Phi}^2_\rkhs}.
\end{equation}
This proves~\eqref{expectation_bound_H_norm_deterministic_integrand_proof} and thus completes the proof.
\end{proof}

The term corresponding to $A_4$ is treated using a Taylor expansion of $b$, which is where we apply methods from~\cite{Capitaine}. The novelty of that paper was that it allowed for a Taylor expansion of an order greater than $2$. In this way we will be able to discuss H\"older norms of any order less than $H$. In addition, we remove the restriction on $\Phi$ from~\cite{Onsager-Machlup}, which was needed for the calculations corresponding to the first order term. Let us write the Taylor expansion of order $k=\left[\frac{1}{2H-2\alpha}\right]$ of $b\left(\Phi_s+B_s\right)$ as
\begin{equation}\label{taylor}
b\left(\Phi_s+B_s\right)=b\left(\Phi_s\right)+b'\left(\Phi_s\right) B_s+\sum_{i=2}^k\frac{1}{m!}b^{\left(m\right)}\left(\Phi_s\right)\left[B_s,\ldots,B_s\right]+R\left(s,B_s\right)
\end{equation}
where the remainder term $R$ actually depends on $b$ and $\Phi$ as well. Note that by the assumptions on $b$, we have the inequality
\begin{equation}\label{remainder_bound}
\abs{R\left(s,x\right)}\leq M\abs{x}^{k+1},\ \ x\in\reals^d.
\end{equation}
The constant $M$ depends only on $b$. In addition, $R\left(s,B_s\right)$ can be seen to be an $\alpha$-H\"{o}lder function for any $\alpha<H$: Each term in the Taylor expansion has this property as products of such (or compositions of $C^1$ functions on such), and the left-hand-side is also such a function as a composition.
Using this expansion, we may write  $A_4$ as the sum of five types of terms:
\begin{align}
&Z_1=\delta\left(\int_0^\cdot b\left(\Phi_t\right)\ud t\right),       \label{Z_1}\\
%\end{equation}
%\begin{equation}
&Z_2=\delta^i\left(\int_0^\cdot b'_{ii}\left(\Phi_t\right)B^i_t\ud t\right)+\frac{1}{2}\int_0^1b'_{ii}\left(\Phi_s\right)\ud s,       \label{Z_2}\\
%\end{equation}
%\begin{equation}
&Z_3=\delta^i\left(\int_0^\cdot b'_{ij}\left(\Phi_t\right)B^j_t\ud t\right),\  \ \ \ \ \ \ \  i\neq j,       \label{Z_3}\\
%\end{equation}
%\begin{equation}
&Z_4=\delta\left(\int_0^\cdot \frac{1}{m!}b^{\left(m\right)}\left(\Phi_t\right)\left[B_t,\ldots,B_t\right]\ud t\right),\ \ \ \ \ \ \ \ 2\leq m\leq k,       \label{Z_4}\\
%\end{equation}
%and
%\begin{equation}
&Z_5=\delta\left(\int_0^\cdot R\left(t,B_t\right)\ud t\right).       \label{Z_5}
\end{align}
We will deal with each one of these terms separately in accordance with Lemma~\ref{exp_terms_lemma}.

Starting with $Z_1$, note that $\int_0^sb\left(\Phi_t\right)\ud t$ is a continuously differentiable function of $s$ on $\left[0,1\right]$. By Inequality~\eqref{eq:H_norm_for_C1}, it belongs to~$\rkhs$. Therefore by~\eqref{deterministic_integrand}, 
\begin{equation}
\lim_{\epsilon\to 0}E\left(e^{cZ_1}\mid\norm{B}<\epsilon\right)=1
\end{equation}
for all $c\in\reals$.

%%%%%%%%%%%%%%%%%%%%%%%%%%%%%%%%%%%%%%%%%%%%%%%%%%%%%%%%%%%%%%%%%%%%
%%%%%%%%%%%%%%%%%%%%%%%%%%%%%%%%%%%%%%%%%%%%%%%%%%%%%%%%%%%%%%%%%%%%

For $Z_2$,~\eqref{Z_2} follows from the following Lemmas.

\begin{lemma}\label{lemma:differentiability_and_trace_of_processes}
	Let $\left(B_t\right)_{t\in\left[0,1\right]}$ be a one-dimensional fractional Brownian motion with Hurst parameter $H<\frac12$, and let $p:\left[0,1\right]\to\reals$ be a continuous function.
\\
	%%%%%%%%%%%%%%%%%%%%%%%%%%%%%%%%%%
	Consider the process $u_t=\int_0^tp_sB_s^m\ud s$. Then:
	\begin{enumerate}
		\item
		For any $m\in\mathbb N$, the process $\left(u_t\right)$
		belongs to $\mathbb D^{1,2}\left(\mathcal H\right)$, and
		\begin{equation}\label{eq:stochastic_gradient_of_process_OM}
		Du\left(h\right)\left(t\right)=\int_0^tmp_sB_s^{m-1}h_{s}\ud s,\ \ h\in\mathcal H.
		\end{equation}
		Furthermore,
		\begin{equation}\label{eq:HS_norm_OM}
		\abs{Du}_{\operatorname{HS}}=m^2B_H^2\int_0^1\int_0^tt^{2H-1}\left(\int_u^t\frac{s^{\frac12-H}K\left(s,u\right)}{\left(t-s\right)^{H+\frac12}}p_sB_s^{m-1}\ud s\right)^2\ud u\ud t
		\end{equation}
		where $B_H=\frac{1}{c_H\Gamma\left(H+\frac12\right)\Gamma\left(\frac12-H\right)}$ and $K\left(s,u\right)$ was defined in Equation~\eqref{eq:kernel_of_isometry_OM}.
		
		\item
		If $m=1$ and $p_s=G\left(h_s\right)$ where $h\in\mathcal H^n$ and $G$ is a Lipschitz continuous function, then $\operatorname{Sym}\left(Du\right)=\frac12\left(Du+Du^{\ast}\right)$ is a trace-class operator on $\mathcal H$ and
		\begin{equation}\label{eq:trace_of_Du_OM}
		\TR\left(\operatorname{Sym}\left(Du\right)\right)=\frac{1}{2}\int_0^1G\left(h_s\right)\ud s.
		\end{equation}
	\end{enumerate}
\end{lemma}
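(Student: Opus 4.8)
The plan is to reduce both parts to deterministic computations on $L^2\left[0,1\right]$ via the isometry $K$, after first identifying the Malliavin derivative by a chain-rule argument. I would begin from the elementary directional derivative $DB_s\left(h\right)=h_s$, which follows at once from $B_s\left(\omega+th\right)=\omega_s+th_s$; equivalently $DB_s=R\left(s,\cdot\right)$, the element of $\mathcal H$ representing $h\mapsto h_s$. Since $B_s^m$ is a polynomial in the Gaussian $B_s$, it lies in $\mathbb D^{1,2}$ and the chain rule gives $D\left(B_s^m\right)\left(h\right)=mB_s^{m-1}h_s$. To pass to $u_t=\int_0^tp_sB_s^m\ud s$, I would first record that $u\in L^2\left(P;\mathcal H\right)$: here $u_0=0$ and $u'_t=p_tB_t^m$ is a.s. bounded on $\left[0,1\right]$, so~\eqref{eq:H_norm_for_C1} gives $\abs u_{\mathcal H}\le C\abs p_\infty\sup_t\abs{B_t}^m$, which has all moments. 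Then I would commute $D$ with the Bochner integral $\int_0^\cdot\left(\cdot\right)\ud s$, justified by a Riemann-sum approximation together with the closability of $D$ (the sums lie in $\mathbb D^{1,2}\left(\mathcal H\right)$ as finite combinations of the $B_s^m$), obtaining $u\in\mathbb D^{1,2}\left(\mathcal H\right)$ and the formula~\eqref{eq:stochastic_gradient_of_process_OM}.

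For the Hilbert--Schmidt norm I would use that $K$ is an isometry of $L^2$ onto $\mathcal H$, so that $\abs{Du}_{\operatorname{HS}}=\abs{K^{-1}\left(Du\right)K}_{\operatorname{HS}}$ may be computed on $L^2$. For $f\in L^2$ the function $Du\left(Kf\right)$ is $C^1$ and vanishes at $0$, since its derivative $mp_tB_t^{m-1}\left(Kf\right)_t$ is continuous, so the simple representation~\eqref{eq:Kinverse_simple_representation_OM} applies. Inserting $\left(Kf\right)_s=\int_0^1K\left(s,u\right)f_u\ud u$ and interchanging the order of integration exhibits $K^{-1}\left(Du\right)K$ as the integral operator with lower-triangular kernel
\[
\kappa\left(t,u\right)=B_Hm\,t^{H-\frac12}\int_u^t\frac{s^{\frac12-H}K\left(s,u\right)}{\left(t-s\right)^{H+\frac12}}p_sB_s^{m-1}\ud s,\qquad u<t,
\]
and $\kappa=0$ for $u>t$. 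Then $\abs{Du}_{\operatorname{HS}}^2=\int_0^1\int_0^t\kappa\left(t,u\right)^2\ud u\ud t$ is precisely~\eqref{eq:HS_norm_OM}; this computation also supplies the bound $\E\abs{Du}_{\operatorname{HS}}^2<\infty$ needed to close the previous paragraph.

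For the second part, with $m=1$ and $p_s=G\left(h_s\right)$ the operator $Du\colon g\mapsto\int_0^\cdot G\left(h_s\right)g_s\ud s$ is \emph{deterministic} (the factor $B_s^{m-1}=1$; that $h$ is $n$-dimensional is immaterial, only $p_s=G\left(h_s\right)$ being scalar and continuous matters), so the statement is a fact about a fixed Volterra operator. Its conjugate $A=K^{-1}\left(Du\right)K$ has the kernel $\kappa$ above with $m=1$; being lower triangular it has vanishing diagonal, so the naive diagonal integral would give trace $0$. The resolution, and the crux of the computation, is the boundary value across the diagonal: rescaling $s=u+\left(t-u\right)\theta$ and using $K\left(s,u\right)\sim c_H\left(s-u\right)^{H-\frac12}$ as $s\to u^+$ yields
\[
\lim_{u\to t^-}\kappa\left(t,u\right)=B_Hc_H\,\Beta\!\left(H+\tfrac12,\tfrac12-H\right)G\left(h_t\right)=G\left(h_t\right),
\]
the last equality being the identity $B_Hc_H\,\Beta\left(H+\frac12,\frac12-H\right)=1$, which follows from $\Beta\left(H+\frac12,\frac12-H\right)=\Gamma\left(H+\frac12\right)\Gamma\left(\frac12-H\right)$ and the definitions of $B_H$ and $c_H$. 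The symmetrised kernel $\tilde\kappa=\frac12\left(\kappa\left(t,u\right)+\kappa\left(u,t\right)\right)$ therefore extends continuously across the diagonal with value $\frac12G\left(h_t\right)$ there, and, granting trace-class, the trace of the induced operator equals the integral of this continuous diagonal, giving $\operatorname{Tr}\left(\operatorname{Sym}\left(Du\right)\right)=\frac12\int_0^1G\left(h_s\right)\ud s$, which is~\eqref{eq:trace_of_Du_OM}.

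It remains to justify that $\operatorname{Sym}\left(Du\right)$ is trace-class, and this is where I expect the main work. I would verify the fractional Sobolev criterion of Theorem~\ref{theorem:nuclear_sufficient_conditions} for $\tilde\kappa$: the inner integral defining $\kappa\left(\cdot,u\right)$ is a Riemann--Liouville integral of order $\frac12-H$ of $s\mapsto s^{\frac12-H}K\left(s,u\right)p_s$, so by the boundedness property of Theorem~\ref{theorem:fractional_integral_of_weighted_holder_OM} one gains $\frac12-H$ H\"older regularity in $t$; combined with the Lipschitz continuity of $G$, this should push the regularity of $\tilde\kappa\left(\cdot,u\right)$ past the threshold $\frac12$ required for nuclearity, uniformly enough in $u$ to keep the $L^2\left(\ud u\right)$ norm finite. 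The delicate point is controlling the singularity of $K\left(s,u\right)$ as $s\to u^+$ while extracting this regularity, and confirming that the two one-sided limits of $\tilde\kappa$ at the diagonal agree, so that the continuous extension, and hence the diagonal trace formula, is legitimate.
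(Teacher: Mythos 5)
Your Part~1 and your diagonal computation are sound and essentially coincide with the paper's argument: you arrive at the same lower-triangular kernel (slightly more cleanly, via the representation~\eqref{eq:Kinverse_simple_representation_OM} for $C^1$ functions, where the paper recomputes $K^{-1}$ from its fractional-derivative form), and the same identity $c_HB_H\Beta\left(\tfrac12+H,\tfrac12-H\right)=1$ underlies both trace computations. Note that finiteness of the Hilbert--Schmidt norm still requires the estimate $\abs{K\left(s,u\right)}\leq C_Hu^{H-\frac12}\left(s-u\right)^{H-\frac12}$ plus the Beta integral, which you use only later for the diagonal limit. For the trace, the paper extracts the factor $\tfrac12$ by averaging the unsymmetrized kernel over squares centred on the diagonal, citing \cite[Theorem~3.1]{Trace_of_kernel_operators}; you extract it from the symmetrization and the continuous extension across the diagonal. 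These are equivalent, but your step ``the trace equals the integral of the continuous diagonal'' is exactly the kind of statement that needs such a citation, since the kernel of a trace-class operator is defined only a.e.

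The genuine gap is the nuclearity of $\operatorname{Sym}\left(Du\right)$, which you yourself flag as ``the main work'' and then only sketch --- and the sketch, as written, fails. First, Theorem~\ref{theorem:fractional_integral_of_weighted_holder_OM} cannot be applied to $s\mapsto s^{\frac12-H}K\left(s,u\right)G\left(h_s\right)$: this function is not H\"older continuous in $s$; it blows up like $\left(s-u\right)^{H-\frac12}$ at $s=u$, and the fact that its fractional integral of order $\frac12-H$ is even bounded is precisely the Beta cancellation --- quantifying the regularity of the output beyond that cancellation is the actual difficulty. Second, even waiving the singularity, your exponent count does not clear the threshold: $h\in\rkhs\subset C^{\gamma}$ only for $\gamma<H$, and Lipschitz $G$ preserves but does not improve this, so $G\left(h_\cdot\right)$ has H\"older exponent strictly below $H$; adding the gain $\frac12-H$ lands strictly below $\frac12$, whereas Theorem~\ref{theorem:nuclear_sufficient_conditions} demands $W^{\alpha,2}$-regularity with $\alpha$ \emph{strictly greater} than $\frac12$. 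Clearing the threshold forces one into the Sobolev scale, where $h\in I^{H+\frac12}_{0+}\left(L^2\right)\subset W^{H+\frac12,2}$ gives room to spare --- but then the H\"older statement of Theorem~\ref{theorem:fractional_integral_of_weighted_holder_OM} is unavailable and the singular factor must be controlled in that scale, which you do not do. The paper avoids all of this with a decomposition your proposal is missing: it writes $\operatorname{Sym}\left(k\right)$ as the sum of $\operatorname{Sym}\left(k-cG\left(h_t\right)\One{0}{t}\left(u\right)\right)$ and $c\operatorname{Sym}\left(G\left(h_t\right)\One{0}{t}\left(u\right)\right)=cG\left(h_{t\vee u}\right)$; the first piece --- the singular kernel with its diagonal part subtracted --- is trace class by Lemma~13 of Moret--Nualart \cite{Onsager-Machlup}, a substantial estimate the paper imports rather than reproves, and only the tame second piece is fed into Theorem~\ref{theorem:nuclear_sufficient_conditions}, using exactly the Sobolev regularity $h\in W^{H+\frac12,2}$ together with the Lipschitz property of $G$. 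Without importing that lemma, or reproving its content, your nuclearity argument does not close.
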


%%%%%%%%%%%%%%%%%%%%%%%%%%%%%%%%%%%%%%%%%%%%%%%%%%%%%%%%%%%%%%%%%%%%
%%%%%%%%%%%%%%%%%%%%%%%%%%%%%%%%%%%%%%%%%%%%%%%%%%%%%%%%%%%%%%%%%%%%

\begin{lemma}\label{lemma:approximate_limit_nuclear_OM}
	Let $\left(B_t\right)$ be a $d$-dimensional fractional Brownian motion with Hurst parameter $H<\frac12$, $G$ a Lipschitz continuous function, $0\leq\alpha<H$ and $h\in\mathcal H$. Then:
	\begin{equation}
	\lim_{\epsilon\to 0+}E_{\epsilon}\left(e^{\delta_i\left(\int_0^{\cdot}G\left(h_t\right)B_{i,t}\ud t\right)}\right)  =  e^{-\frac{1}{2}\int_0^1G\left(h_s\right)\ud s}.
	\end{equation}
\end{lemma}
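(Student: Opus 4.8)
The plan is to realise $\delta_i\left(\int_0^\cdot G\left(h_t\right)B_{i,t}\ud t\right)$ as a double Wiener--It\^o integral $I_2\left(k\right)$ with a deterministic symmetric kernel $k$, and then to apply Harg\'e's Theorem~\ref{theorem:approximate_limit_for_nuclear}. Set $u=\int_0^\cdot G\left(h_t\right)B_{i,t}\ud t$. By the $m=1$ case of Lemma~\ref{lemma:differentiability_and_trace_of_processes}, $u\in\mathbb D^{1,2}\left(\mathcal H\right)$, and since~\eqref{eq:stochastic_gradient_of_process_OM} reads $Du\left(h\right)\left(t\right)=\int_0^t G\left(h_s\right)h_s\ud s$ for $m=1$, the Malliavin derivative $Du$ is a \emph{deterministic} operator; equivalently, $u$ lies in the first (fBm-)chaos because its integrand is linear in $B$. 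Transporting to the underlying Brownian motion $W$ of Theorem~\ref{theorem:relation_between_delta_and_Ito_integral_OM} through the isometry $K$, so that $\delta$ acts as the Skorokhod integral $\delta_W$ on the process $K^{-1}u$, the Skorokhod integral of a first-chaos-valued process is a pure second chaos. Hence $\delta_i\left(u\right)=I_2\left(k\right)$, where $k$ is the symmetrisation of the kernel of $Du$ carried over to $L^2\left[0,1\right]$ by $K$; there is no zeroth-order correction, the element of the first chaos being integrated having mean zero.

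The trace is then read off from the second part of Lemma~\ref{lemma:differentiability_and_trace_of_processes}: since trace is invariant under the unitary transport by $K$ and $I_2$ sees only the symmetric part, $\operatorname{Tr}\left(k\right)=\TR\left(\operatorname{Sym}\left(Du\right)\right)=\frac12\int_0^1 G\left(h_s\right)\ud s$. Consequently, once we know that the operator induced by $k$ is nuclear, Harg\'e's theorem (applied with the measurable norm $\norm{\cdot}$, whose associated random variable is $\abs{B}_\infty$ or $\abs{B}_{C^\alpha}$) gives the approximate limit of $e^{I_2\left(k\right)}$, so that
\[ \lim_{\epsilon\to 0+}\Ee e^{\delta_i\left(u\right)}=\lim_{\epsilon\to 0+}\Ee e^{I_2\left(k\right)}=e^{-\operatorname{Tr}\left(k\right)}=e^{-\frac12\int_0^1 G\left(h_s\right)\ud s}, \]
which is exactly the asserted identity. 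Note that this argument is insensitive to the Hölder exponent $\alpha<H$, which enters only through the choice of measurable norm; this is consistent with the claim holding for every $\alpha<H$ in the case $m=1$.

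The main obstacle, and the only place where genuine work is required, is verifying nuclearity of the induced operator: finiteness of the Hilbert--Schmidt norm from Lemma~\ref{lemma:differentiability_and_trace_of_processes} is not sufficient. I would use the Birman--Solomyak criterion (Theorem~\ref{theorem:nuclear_sufficient_conditions}), which demands $\norm{\abs{k\left(\cdot,t\right)}_{W^{\beta,2}}}_{L^2\left(\ud t\right)}<\infty$ for some $\beta>\frac12$. Reading off the kernel from the Hilbert--Schmidt formula~\eqref{eq:HS_norm_OM} with $m=1$, one has, up to constants and for $u<t$,
\[ k\left(t,u\right)\ \propto\ t^{H-\frac12}\int_u^t\frac{s^{\frac12-H}K\left(s,u\right)}{\left(t-s\right)^{H+\frac12}}G\left(h_s\right)\ud s, \]
in which the inner integral is, up to a constant, the Riemann--Liouville integral $I^{\frac12-H}_{u+}$ of $s\mapsto s^{\frac12-H}K\left(s,u\right)G\left(h_s\right)$. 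Using the diagonal asymptotics $K\left(s,u\right)\sim c_H\left(s-u\right)^{H-\frac12}$ together with the weighted mapping properties of fractional integrals from Theorem~\ref{theorem:fractional_integral_of_weighted_holder_OM}, the symmetrisation makes $k$ continuous across the diagonal, and I would then estimate its fractional-Sobolev regularity in $t$, uniformly enough in $u$ to integrate in $L^2\left(\ud t\right)$, so as to produce a Sobolev exponent strictly above $\frac12$. The delicate points are the control near the diagonal $s=u$ (where the integrand is singular of order $H-\frac12$) and near $t=0$ (the weight $t^{H-\frac12}$), which is precisely what the weighted bounds of Theorem~\ref{theorem:fractional_integral_of_weighted_holder_OM} are designed to handle.

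Finally, the reduction to a single component is harmless: $\delta_i$ involves only the $i$-th Brownian motion and the $d$ coordinates of $B$ are independent, so $k$ is supported on the $\left(i,i\right)$ block and both the nuclearity estimate and the trace computation reduce to the one-dimensional statements of Lemma~\ref{lemma:differentiability_and_trace_of_processes}, while the conditioning $\norm{B}<\epsilon$ and Harg\'e's theorem are applied on the full $d$-dimensional abstract Wiener space.
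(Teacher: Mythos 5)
Your proposal is correct and follows essentially the same route as the paper: transfer the process to the underlying Brownian motion through the isometry $K$, observe that the transported Malliavin derivative is deterministic so the divergence is a pure second-chaos element $I_2\left(\hat{k}\right)$ with no zeroth-order term, and then apply Harg\'{e}'s Theorem~\ref{theorem:approximate_limit_for_nuclear} together with the trace identity~\eqref{eq:trace_of_Du_OM}. The one point where you diverge is illusory: you present nuclearity of the induced operator as "the main obstacle" still requiring genuine work, but Part~2 of Lemma~\ref{lemma:differentiability_and_trace_of_processes} already asserts that $\operatorname{Sym}\left(Du\right)$ is trace-class (not merely Hilbert--Schmidt), and trace-class is preserved under the unitary conjugation by $K$, so the nuclearity of $\hat{k}$ is immediate from the lemma you are already citing for the trace --- this is exactly how the paper's proof proceeds. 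The Birman--Solomyak estimate you sketch belongs to the proof of that lemma (carried out in Section~\ref{section:proofs_of_lemmas}), and there the paper argues differently from your sketch: rather than a direct fractional-Sobolev bound on the whole kernel, it subtracts a multiple of $G\left(h_t\right)\One{0}{t}\left(u\right)$, cites Moret--Nualart for the trace-class property of the remainder, and applies Theorem~\ref{theorem:nuclear_sufficient_conditions} only to the symmetrized piece $G\left(h_{t\vee u}\right)$ using $\mathcal H\subset W^{\frac12+H,2}$.
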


%%%%%%%%%%%%%%%%%%%%%%%%%%%%%%%%%%%%%%%%%%%%%%%%%%%%%%%%%%%%%%%%%%%%

\begin{proof}
	Denote $\beta_t=K^{-1}\left[\int_0^tG\left(h_s\right)B_{s,i}\ud s\right]$, where $K$ is the isometry reviewed in Subsection~\ref{subsection:the_CM_space_OM}. According to Lemma~\ref{lemma:differentiability_and_trace_of_processes} and a standard transfer principle, $\beta\in\mathbb D_W^{1,2}\left(L^2\left[0,1\right]\right)$ and 
	\begin{equation}
	D_W\beta=K^{-1}D\left(\int_0^{\cdot}G\left(h_s\right)B_{s,i}\ud s\right)K.
	\end{equation}
	In particular, it follows from~\eqref{eq:stochastic_gradient_of_process_OM} that $D_W\beta$ is deterministic. Let $k\left(t,u\right)$ denote its Hilbert-Schmidt kernel. We have computed $k\left(t,u\right)$ in the proof of Lemma~\ref{lemma:differentiability_and_trace_of_processes}: see~\eqref{eq:kernel_of_unitary_equivalence_of_DU_OM}. However, we do not require the expression here. All we need is that $\beta_t=\int_0^1k\left(t,u\right)\ud W_u$. 
	Therefore
	\begin{equation}
	\delta\left(\int_0^{\cdot}G\left(h_s\right)B_{s,i}\ud s\right)=\int_0^1\beta_t\ud W_t=\int_0^1\int_0^1k\left(t,u\right)\ud W_u\ud W_t
	\end{equation}
	belongs to the second chaos. Denote the double Wiener integral (with respect to $\left(W_t\right)$) by $I_2$ and let $\hat{k}\left(t,u\right)$ be the symmetrization of $k\left(t,u\right)$. By Lemma~\ref{lemma:differentiability_and_trace_of_processes}, $\hat{k}\left(t,u\right)$ is the kernel of a nuclear operator on $L^2\left[0,1\right]$. Thus according to Theorem~\ref{theorem:approximate_limit_for_nuclear}, $e^{I_2\left(\hat{k}\right)}$ admits an approximate limit with respect to the measurable seminorm $\norm{B}$:
	\begin{equation}
	\limsup_{\epsilon\to0+}E\left(e^{I_2\left(\hat{k}\right)}\mid\norm{B}<\epsilon\right)=e^{-\TR\left(\hat{k}\right)}=e^{-\frac{1}{2}\int_0^1G\left(h_s\right)\ud s}.
	\end{equation}
	This completes the proof.
\end{proof}

We note that Lemma~\ref{lemma:approximate_limit_nuclear_OM} also shows that~\eqref{eq:approximate_limit_high_orders_OM} holds for any $h\in\mathcal H$ if $m=1$.

%%%%%%%%%%%%%%%%%%%%%%%%%%%%%%%%%%%%%%%%%%%%%%%%%%%%%%%%%%%%%%%%%%%%
%%%%%%%%%%%%%%%%%%%%%%%%%%%%%%%%%%%%%%%%%%%%%%%%%%%%%%%%%%%%%%%%%%%%

We now move on to $Z_3$. Since $i\neq j$, $B^j$ is independent of the fractional Brownian motion beneath the divergence operator $\delta^i$. Furthermore, since $\int_0^\cdot b'_{ij}\left(\Phi_t\right)B^j_t\ud t\in\rkhs$ almost surely and
\begin{equation}
\norm{\int_0^\cdot b'_{ij}\left(\Phi_t\right)B^j_t\ud t}_\rkhs\leq\norm{b'_{ij}\circ\Phi\cdot B^j}_{\infty},
\end{equation}
it follows that
\begin{equation}
\limsup_{\epsilon\to 0}\Ee e^{c\norm{\int_0^\cdot b'_{ij}\left(\Phi_t\right)B^j_t\ud t}^2_\rkhs}\leq 1,\ \ \forall c\in\reals.
\end{equation}
The following Lemma completes the treatment of $Z_3$. Denote 
\begin{equation*} 
\hat{\mathcal F}^i=\sigma\left(B^j_t,\ 0\leq t\leq 1,\ j\neq i\right).
\end{equation*}
\begin{lemma}\label{exponential_limit_independent}
Let $\Psi\in\rkhs$ almost surely be such that $\Psi^i$ is $\hat{\mathcal F}^i$-measurable for $i=1,\ldots,d$ and that for any $c\in\reals$, 
\begin{equation}
\limsup_{\epsilon\to 0}\Ee e^{c\norm{\Psi}_\rkhs^2}\leq 1.
\end{equation}
 Then
\begin{equation}
\lim_{\epsilon\to 0}E\left(e^{\delta\Psi}\mid\norm{B}<\epsilon\right)=1.
\end{equation}
\end{lemma}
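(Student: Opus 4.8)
The plan is to reduce the statement to a single noise component and then apply, \emph{conditionally} on the other components, the deterministic bound \eqref{expectation_bound_H_norm_deterministic_integrand_proof} that we already proved. Writing $\delta\Psi=\sum_{i=1}^d\delta^i\left(\Psi^i\right)$, Lemma~\ref{exp_terms_lemma} tells us that it is enough to show
\begin{equation*}
\limsup_{\epsilon\to 0}\Ee e^{c\delta^i\left(\Psi^i\right)}\leq 1,\qquad \forall c\in\reals,\ i=1,\dots,d.
\end{equation*}
Replacing $\Psi$ by $c\Psi$ (which is again componentwise $\hat{\mathcal F}^i$-measurable and still satisfies the exponential-moment hypothesis, since $\limsup_\epsilon\Ee e^{c'c^2\norm{\Psi}_\rkhs^2}\leq 1$ for every $c'$), I may assume $c=1$. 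Moreover $\norm{\Psi^i}_\rkhs^2\leq\sum_j\norm{\Psi^j}_\rkhs^2=\norm{\Psi}_\rkhs^2$, so each $\Psi^i$ inherits the hypothesis $\limsup_\epsilon\Ee e^{c\norm{\Psi^i}_\rkhs^2}\leq 1$.

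First I would fix $i$ and condition on $\hat{\mathcal F}^i$. Because $\Psi^i$ depends only on the components $B^j$ with $j\neq i$, under this conditioning it is frozen to a deterministic $\psi\in\rkhs$, and its Malliavin derivative in the $B^i$-direction vanishes; consequently (first for $\hat{\mathcal F}^i$-measurable simple integrands, then by closability of $\delta^i$) the Skorohod integral $\delta^i\left(\Psi^i\right)$ coincides with the Paley--Wiener integral $\delta^i\left(\psi\right)$, a centered Gaussian functional of $B^i$ with variance $\norm{\psi}_\rkhs^2=\norm{\Psi^i}_\rkhs^2$. The point of the independence hypothesis is precisely that it turns $\delta^i\left(\Psi^i\right)$ into a conditionally Gaussian linear functional, exactly the situation handled in the deterministic case.

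The structural observation that makes the conditional argument go through is that, \emph{with the other components held fixed}, the set of $B^i$-paths satisfying $\norm{B}<\epsilon$ is symmetric and convex. For the supremum norm it equals $\{b:\abs{b_t}<(\epsilon^2-\sum_{j\neq i}(B^j_t)^2)_+^{1/2}\text{ for all }t\}$, an intersection of symmetric slabs; for the $\alpha$-H\"older norm one intersects this with the convex symmetric constraint coming from the seminorm $\sup_{s<t}(t-s)^{-\alpha}\big(\sum_{j\neq i}(B^j_t-B^j_s)^2+(b_t-b_s)^2\big)^{1/2}<\epsilon$. Since the proof of \eqref{expectation_bound_H_norm_deterministic_integrand_proof} used only that $\delta\Phi$ is a Gaussian linear functional and that the small-ball set is symmetric and convex (via the correlation inequality \eqref{infinite_gaussian_correlation_inequality_deterministic_integrand_proof}), that proof applies verbatim in the Gaussian space of $B^i$ after conditioning, yielding
\begin{equation*}
E\left(e^{\delta^i\left(\Psi^i\right)}\one_{\norm{B}<\epsilon}\mid\hat{\mathcal F}^i\right)\leq e^{\frac12\norm{\Psi^i}_\rkhs^2}\,P\left(\norm{B}<\epsilon\mid\hat{\mathcal F}^i\right).
\end{equation*}
As $\norm{\Psi^i}_\rkhs^2$ is $\hat{\mathcal F}^i$-measurable, integrating over $\hat{\mathcal F}^i$ and dividing by $P\left(\norm{B}<\epsilon\right)$ collapses the conditional small-ball probability by the tower property, giving the clean bound $\Ee e^{\delta^i\left(\Psi^i\right)}\leq\Ee e^{\frac12\norm{\Psi^i}_\rkhs^2}\leq\Ee e^{\frac12\norm{\Psi}_\rkhs^2}$; taking $\limsup_{\epsilon\to0}$ and invoking the hypothesis with $c=\tfrac12$ bounds this by $1$, which is what Lemma~\ref{exp_terms_lemma} requires.

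I expect the main obstacle to be the rigorous justification of the conditional step: identifying $\delta^i\left(\Psi^i\right)$ with the frozen Paley--Wiener integral $\delta^i\left(\psi\right)$ (the approximation/closability argument for general $\Psi^i\in\operatorname{dom}\left(\delta\right)$), and transferring the correlation-inequality argument behind \eqref{expectation_bound_H_norm_deterministic_integrand_proof} to the conditional law of $B^i$. By contrast, the convexity and symmetry of the conditional small-ball set---the ingredient that makes this transfer possible---is elementary, and it is exactly what singles out the supremum and H\"older norms as admissible here.
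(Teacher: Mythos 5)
Your proposal is correct and takes essentially the same route as the paper's proof: reduce to single components via Lemma~\ref{exp_terms_lemma}, condition on $\hat{\mathcal F}^i$ so that $\Psi^i$ is frozen and $\delta_i\Psi^i$ becomes conditionally Gaussian (the paper formalizes this freezing through a regular conditional probability), then apply the symmetric-convex correlation argument behind~\eqref{expectation_bound_H_norm_deterministic_integrand_proof} to get $\Ee e^{c\delta_i\Psi^i}\leq \Ee e^{\frac{c^2}{2}\norm{\Psi^i}_\rkhs^2}$ and invoke the hypothesis. The only differences are cosmetic: your rescaling to $c=1$ and your explicit description of the conditional small-ball set, where the paper simply notes its symmetry and convexity.
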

Lemma~\ref{exponential_limit_independent} and its proof are inspired from~\cite[Theorem~1]{Shepp_Zeitouni_2}. See also~\cite{Shepp_Zeitouni}.
\begin{proof}
Since $\delta\Psi=\delta_1\Psi^1+\cdots+\delta_d\Psi^d$, it suffices by Lemma~\ref{exp_terms_lemma} to show that 
\begin{equation}
\forall c\in\reals\ \ \limsup_{\epsilon\to 0}\Ee e^{c\delta_i\Psi^i}\leq 1,\ \ \ i=1,\ldots,d.
\end{equation}
Let's take $i=1$. We will use the notation: 
\begin{equation*}
\mathbb EX=E\left(X\mid \hat{\mathcal F}^1\right),
\end{equation*}
We then have the following typical equality:
\begin{equation}\label{first_estimate_exp_limit_independent_proof}
\Ee e^{c\delta_1\Psi^1}=\Ee\left(\mathbb E_{\epsilon}e^{c\delta_1\Psi^1}\right),
\end{equation}
where $\mathbb E_{\epsilon}=\mathbb E\left(\cdot\mid \norm{B}<\epsilon\right)$.
We will also make use of the regular conditional probability 
\begin{equation}
\mathbb P\left(A\right)=P\left(A\mid \hat{\mathcal F}^1\right),\ \ A\in\mathcal F;
\end{equation} 
details can be found, for example, in ~\cite{Dudley} (see~Theorem~10.2.2 in particular). This is a random probability measure on $\left(\Omega,\mathcal F\right)$. Fix some $\omega\in\Omega=C\left(\left[0,1\right],\reals^d\right)$ for which $\mathbb P_{\omega}$ is well defined as a probability measure (these $\omega$s have full probability). We will need the fact that for almost all such $\omega$,
\begin{equation}\label{components_are_deterministic_under_regular_conditional_probability}
B^i=\omega^i\ \ \PR_{\omega}\text{-a.s.},\ \ i=2,\ldots,d.
\end{equation}
The proof of Equation~\eqref{components_are_deterministic_under_regular_conditional_probability} is most naturally written in the language of conditional distributions: consider for the next few lines 
\begin{equation*}
C\left(\left[0,1\right];\reals^{d-1}\times\reals^d\right)=C\left(\left[0,1\right];\reals^{d-1}\right)\times C\left(\left[0,1\right];\reals^d\right),
\end{equation*}
using the notation $\left(x,\omega\right)$ for its elements. We endow this space with its Borel $\sigma$-algebra and the image probability measure $Q=P\circ \iota^{-1}$, where 
\begin{equation*}
\begin{cases}
\iota:C\left(\left[0,1\right];\reals^d\right)\to C\left(\left[0,1\right];\reals^{d-1}\times\reals^d\right)\\
\iota\left(\omega\right)=\left(\rule{0cm}{0.4cm}\left(\omega^2,\ldots,\omega^d\right),\omega\right).
\end{cases}
\end{equation*}
In the language of \cite[Theorem~10.2.1]{Dudley}, where $\omega$ replaces $y$, the conditional distributions $Q_{x}$ on $C\left(\left[0,1\right];\reals^d\right)$ exist and
\begin{equation*}
1=Q\left(\left\{\omega^i=x^{i-1},\ i=2,\ldots,d\right\}\right)=E\left(Q_x\left(\left\{\omega^i=B^{i},\ i=2,\ldots,d\right\}\right)\right).
\end{equation*}
 Since $Q_{\hat{\omega}_1}=\PR_{\omega}$ for $P$-almost all $\omega$ by uniqueness, we obtain~\eqref{components_are_deterministic_under_regular_conditional_probability}.

 The internal conditioning, on the right hand side of~\eqref{first_estimate_exp_limit_independent_proof}, on the event $\norm{B}<\epsilon$, should now be thought off as being on $B^1$; as such, notice that it is symmetric and convex. It follows, by the same argument as in the proof of~\eqref{deterministic_integrand}, that
\begin{equation}
\mathbb E\left(\left. e^{c\delta_1\Psi^1}\right| \norm{B}<\epsilon\right)\leq \mathbb E e^{\abs{c\delta_1\Psi^1}},\ \ P\text{-a.s.};
\end{equation}
since, under $\PR$, $\delta_1\left(c\Psi^1\right)\sim N\left(0,c^2\norm{\Psi^1}^2_\rkhs \right)$,
\begin{equation}\label{last_estimate_exp_limit_independent_proof}
\mathbb E e^{\abs{c\delta_1\Psi^1}}=e^{\frac{c^2}{2}\norm{\Psi^1}^2_\rkhs }.
\end{equation}
Estimates~\eqref{first_estimate_exp_limit_independent_proof}-\eqref{last_estimate_exp_limit_independent_proof} can now be summarised:
\begin{equation}
\Ee e^{c\delta_1\Psi^1}\leq \Ee e^{\frac{c^2}{2}\norm{\Psi^1}^2_\rkhs }.
\end{equation}
By the assumption, it follows that
\begin{equation}
\limsup_{\epsilon\to 0}\Ee e^{c\delta_1\Psi^1}\leq 1,\ \ \forall c\in\reals.
\end{equation}
This completes the proof. 
\end{proof}

We will treat $Z_4$ in a manner inspired by~\cite{Capitaine} (See~\cite[Lemma~4]{Capitaine}). Fix an $m$ ($2\leq m\leq k$); we can write $Z_4$ as the sum of terms of the form
\begin{equation}\label{z4_terms}
\delta_i\left(\int_0^\cdot g\left(\Phi_t\right)\left(B_{i_1}^{r_1}B_{i_2}^{r_2}\cdots B_{i_q}^{r_q}\right)_t\ud t\right),
\end{equation}
where $1\leq i,i_1,\ldots,i_q\leq d$, $i_1,\ldots,i_q$ are all distinct, $r_1,\ldots,r_q\in\mathbb{N}$ satisfy $r_1+\cdots+r_q=m$ and $g\in C^1$ is a bounded function with bounded derivative. We will show that each one of those terms, $A$, satisfies 
\begin{equation}\label{z4_terms_exponential_limit}
\limsup_{\epsilon\to 0}E_{\epsilon}e^{cA}\leq 1.
\end{equation}
 This will complete the treatment of $Z_4$ by Lemma~\ref{exp_terms_lemma}. We will henceforth assume without loss of generality that $i=1$ in~\eqref{z4_terms}.

If $i_1,\ldots,i_q\neq 1$ then~\eqref{z4_terms_exponential_limit} follows from Lemma~\ref{exponential_limit_independent}. For the case $i_1=1$, we have the following (notice the slight relabelling of indices):
\begin{prop}
Let $g\in C^1\left(\reals^d;\reals\right)$ be a bounded function with bounded derivative and let $h\in\mathcal H_{\alpha}$. Fix $2\leq m\leq k$. Then for any $1<i_1<i_2<\cdots<i_q\leq d$ and $r,r_1,\ldots,r_q\in\mathbb{N}$ such that $r+r_1+\cdots+r_q=m$,
\begin{equation}
\lim_{\epsilon\to 0}E_{\epsilon}e^{\delta_1\left[\int_0^\cdot g\left(h_t\right)\left(B_1^rB_{i_1}^{r_1}B_{i_2}^{r_2}\cdots B_{i_q}^{r_q}\right)_t\ud t\right]}=1.
\end{equation}
\end{prop}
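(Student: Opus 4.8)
The plan is to establish the upper bound $\limsup_{\epsilon\to0}\Ee e^{A}\le 1$, where $A$ denotes the exponent in the statement. Since $g$ ranges over all bounded $C^1$ functions with bounded derivative, replacing $g$ by $cg$ shows this bound holds with $cA$ in place of $A$ for every $c\in\reals$; the two-sided limit $\lim_{\epsilon\to0}\Ee e^A=1$ then follows by a standard argument (the upper bounds for $\pm A$ force $\Ee A\to0$, whence Jensen gives $\liminf\Ee e^A\ge e^{\Ee A}\to1$). The case $q=0$ is immediate: then the product is the pure power $B_{1}^{m}$, and the required bound is precisely the defining condition~\eqref{eq:approximate_limit_high_orders_OM} of $\mathcal H_{\alpha}$ for the power $m\le k$ and the coefficient $g(h)$. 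So I may assume $q\ge1$, i.e. at least one genuinely independent factor is present and $r\le m-1$.

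For $q\ge1$ I would condition on $\hat{\mathcal F}^1=\sigma\left(B^j_t,\ 0\le t\le1,\ j\neq1\right)$ exactly as in the proof of Lemma~\ref{exponential_limit_independent}: writing $\mathbb E=E\left(\cdot\mid\hat{\mathcal F}^1\right)$ and $\mathbb E_\epsilon=\mathbb E\left(\cdot\mid\norm B<\epsilon\right)$, the tower property gives $\Ee e^A=\Ee\left(\mathbb E_\epsilon e^A\right)$, and under the regular conditional probability $\PR_\omega$ the components $B^{i_1},\ldots,B^{i_q}$ are frozen to deterministic paths while $B_1$ remains a one-dimensional fractional Brownian motion. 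Setting $\phi_t=\left(B_{i_1}^{r_1}\cdots B_{i_q}^{r_q}\right)_t$, the exponent becomes the one-dimensional object $A=\delta_1\left(\int_0^\cdot\psi_t B_{1,t}^r\ud t\right)$ with $\psi_t:=g(h_t)\phi_t$ deterministic. The decisive gain is smallness: on $\{\norm B<\epsilon\}$ each factor obeys $\abs{B_{i_j}}_{C^\alpha}<\epsilon$, and since $C^\alpha$ is a Banach algebra, $\abs{\phi}_{C^\alpha}\le\prod_j\abs{B_{i_j}}_{C^\alpha}^{r_j}\le\epsilon^{r_1+\cdots+r_q}=\epsilon^{m-r}$, so the conditional coefficient $\psi$ is uniformly of order $\epsilon^{m-r}$.

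When $r=1$ the conditional exponent lies in the second Wiener chaos of $B_1$, and I can close the argument as in Lemma~\ref{lemma:approximate_limit_nuclear_OM}: part~(2) of Lemma~\ref{lemma:differentiability_and_trace_of_processes}, applied with the continuous coefficient $p=\psi$, shows that the symmetrised derivative is trace class with trace $\tfrac12\int_0^1\psi_s\ud s$; Theorem~\ref{theorem:nuclear_sufficient_conditions} supplies nuclearity; and Theorem~\ref{theorem:approximate_limit_for_nuclear} yields the conditional approximate limit $\mathbb E_\epsilon e^A\to e^{-\frac12\int_0^1 g(h_s)\phi_s\ud s}$. Because $\bigl|\int_0^1 g(h_s)\phi_s\ud s\bigr|\le\abs g_\infty\abs\phi_\infty\lesssim\epsilon^{\,m-1}\to0$, this conditional limit tends to $1$; integrating over $\omega$ against $\Ee$ then gives $\Ee e^A\to1$, once one checks a uniform-in-$\omega$ domination (so that dominated convergence applies) and that, for each frozen $\omega$, the map $B_1\mapsto\abs{(B_1,\omega_{\mathrm{rest}})}_{C^\alpha}$ is a measurable seminorm to which Theorem~\ref{theorem:approximate_limit_for_nuclear} may be applied — it is convex and symmetric in $B_1$ because the spatial norm is Euclidean.

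The case $r\ge2$ (still with $q\ge1$) is the main obstacle. Here the conditional exponent sits in the $(r+1)$-st chaos, so Theorem~\ref{theorem:approximate_limit_for_nuclear} no longer applies and I must invoke the hypothesis $h\in\mathcal H_{\alpha}$. The difficulty is that condition~\eqref{eq:approximate_limit_high_orders_OM} controls the pure power $\delta_1\left(\int_0^\cdot\tilde g(h_t)B_{1,t}^r\ud t\right)$ only for coefficients of the template form $\tilde g(h)$, whereas after conditioning my coefficient is $g(h)\phi$, which is not a function of $h_t$; and the naive route is blocked because the unconditioned exponential moments of the $(r+1)$-st chaos are infinite, so one cannot simply estimate $\mathbb E e^{pA}$ and use smallness through H\"older's inequality. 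The plan is instead to exploit, inside the small-ball conditioning, the linearity of the conditional divergence in its coefficient together with the smallness $\abs\phi_{C^\alpha}\lesssim\epsilon^{m-r}$, so as to dominate the mixed-coefficient exponential by the $\mathcal H_{\alpha}$-controlled template one up to a factor that tends to $1$, thereby yielding $\limsup_\epsilon\mathbb E_\epsilon e^A\le1$ before integrating over $\omega$ as above. I expect the genuine work to lie in making this transfer rigorous — reconciling the $\hat{\mathcal F}^1$-frozen coefficient $g(h)\phi$ with the template $\tilde g(h)$ using only the smallness of $\phi$ — and in securing the uniformity in $\omega$ needed to carry the conditional estimates through the outer expectation $\Ee$.
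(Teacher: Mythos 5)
Your reduction to $\limsup_{\epsilon\to0}\Ee e^{A}\leq 1$ and your base case $q=0$ agree with the paper, but for $q\geq1$ the proposal has two genuine gaps. First, the case $r\geq2$, $q\geq1$ --- which you yourself flag as ``the main obstacle'' --- is exactly the heart of the proposition, and you offer only a plan, not an argument: condition~\eqref{eq:approximate_limit_high_orders_OM} controls pure powers $B_1^m$ with coefficients of the template form $\tilde g\left(h_t\right)$, and no mechanism is given for transferring it to the frozen mixed coefficient $g\left(h_t\right)\phi_t$; as you note, exponential moments in chaoses of order $\geq3$ are infinite, so there is no H\"older/domination route, which is precisely why a structurally different idea is required. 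Second, your $r=1$ step misapplies the paper's tools: part (2) of Lemma~\ref{lemma:differentiability_and_trace_of_processes} (and, behind it, Theorem~\ref{theorem:nuclear_sufficient_conditions}) requires the coefficient to be $G\left(h_s\right)$ with $h\in\mathcal H$, because the trace-class argument rests on $\mathcal H=I_{0+}^{H+\frac12}\left(L^2\left[0,1\right]\right)\subset W^{\frac12+H,2}$. After conditioning, your coefficient $\psi=g\left(h\right)\phi$ contains the frozen paths $B^{i_j}$, which are only $C^{\alpha}$ with $\alpha<H<\frac12$; such paths do not lie in $W^{\frac12+H,2}$, so the nuclearity of the symmetrized kernel $\psi_{t\vee u}$ cannot be obtained this way --- smallness of $\abs{\phi}_{C^{\alpha}}$ under the conditioning is irrelevant, since being trace class is a qualitative property. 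The further points you defer (whether Theorem~\ref{theorem:approximate_limit_for_nuclear} applies to the frozen, non-homogeneous convex symmetric functional $B_1\mapsto\abs{\left(B_1,\omega_{\mathrm{rest}}\right)}_{C^{\alpha}}$, and the uniform-in-$\omega$ domination) are additional unresolved steps.

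The idea you are missing is the paper's polarization-plus-rotation induction on $q$. One polarizes $a^rb^{r_1}=\sum_i c_i\left(a\cos\theta_i-b\sin\theta_i\right)^{r+r_1}$, applies this with $a=B_1$, $b=B_{i_1}$, and then rotates: if $A$ is the orthogonal matrix acting as the rotation by $\theta_i$ in the $\left(1,i_1\right)$-plane, then $W:=AB$ is again a fractional Brownian motion and $\norm{W}=\norm{B}$, so the conditioning event is unchanged. Writing $\delta_1=\cos\theta_i\tilde{\delta}_1+\sin\theta_i\tilde{\delta}_2$, each polarized term splits into a term of the same type but with the pure power $W_1^{r+r_1}$ and only $q$ remaining independent factors --- handled by the induction hypothesis, whose base case is your $q=0$ step --- plus a term whose integrand is independent of the component under the divergence, handled by Lemma~\ref{exponential_limit_independent}; Lemma~\ref{exp_terms_lemma} recombines everything. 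This keeps all randomness unconditioned, so the $\mathcal H_{\alpha}$ hypothesis on pure powers suffices for every mixed monomial, and none of the freezing, nuclearity, or dominated-convergence issues in your approach ever arise.
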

\begin{proof}
We will prove this by induction on $q=0,\ldots,m\wedge d-1$.

For $q=0$ we have $r=m\geq 2$ and therefore we need to prove that
\begin{equation*}
\lim_{\epsilon\to 0}E_{\epsilon}e^{\delta_1\left[\int_0^\cdot g\left(h_t\right)B^m_{1,t}\ud t\right]}=1.
\end{equation*}
This is true since $h\in\mathcal H_{\alpha}$ and $m<\left[\frac{1}{2H-2\alpha}\right]$.

Assume that the proposition holds for some $0\leq q\leq m\wedge d-2$; we now prove that it holds for $q+1$. More specifically, we will actually show that
\begin{equation}\label{z4_induction_step}
\limsup_{\epsilon\to 0}E_{\epsilon}e^{\delta_1\left[\int_0^\cdot g\left(h_t\right)\left(B_{1,t}^rB_{i_1,t}^{r_1}\cdots B_{i_{q+1},t}^{r_{q+1}}\right)\ud t\right]}\leq 1.
\end{equation}
Combined with Lemma~\ref{exp_terms_lemma}, and since $cg$ is also $C^1$, bounded, with bounded derivative for any $c\in\reals$, this will complete the proof. By polarising the following monomial:
\begin{equation*}
a_1a_2\cdots a_{r+r_1}=\frac{1}{\left(r+r_1\right)!2^{r+r_1}}\sum_{\mathbf{\epsilon}\in\left\{\pm 1\right\}^{r+r_1}}\epsilon_1\cdots\epsilon_{r+r_1}\left(\epsilon_1a_1+\cdots+\epsilon_{r+r_1}a_{r+r_1}\right)^{r+r_1},
\end{equation*}
we get the identity
\begin{align*}
a^rb^{r_1}&=\frac{1}{\left(r+r_1\right)!2^{r+r_1}}\sum_{\mathbf{\epsilon}\in\left\{\pm 1\right\}^{r+r_1}}\epsilon_1\cdots\epsilon_{r+r_1}\cdot\\
&\cdot\left[\left(\epsilon_1+\cdots+\epsilon_r\right)a+\cdots+\left(\epsilon_{r+1}+\cdots+\epsilon_{r+r_1}\right)b\right]^{r+r_1}.
\end{align*}
If $\epsilon_1+\cdots+\epsilon_r=\epsilon_{r+1}+\cdots+\epsilon_{r+r_1}=0$ then there's no contribution from that $\mathbf{\epsilon}$. Otherwise, we can find some $0\leq\theta<2\pi$ (depending on that $\mathbf{\epsilon}$) such that
\begin{equation*}
\cos\theta=\frac{\epsilon_1+\cdots+\epsilon_r}{\sqrt{\left(\epsilon_1+\cdots+\epsilon_r\right)^2+\left(\epsilon_{r+1}+\cdots+\epsilon_{r+r_1}\right)^2}}
\end{equation*}
and
\begin{equation*}
-\sin\theta=\frac{\epsilon_{r+1}+\cdots+\epsilon_{r+r_1}}{\sqrt{\left(\epsilon_1+\cdots+\epsilon_r\right)^2+\left(\epsilon_{r+1}+\cdots+\epsilon_{r+r_1}\right)^2}}.
\end{equation*}
In summary, if $l$ is the number of such $\mathbf{\epsilon}$s/$\theta$s, then for appropriate constants $c_1,\ldots,c_l$, we have:
\begin{equation}\label{final_polarization_formula}
a^rb^{r_1}=\sum_{i=1}^lc_i\left(a\cos\theta_i-b\sin\theta_i\right)^{r+r_1}.
\end{equation}
If we apply~\eqref{final_polarization_formula} to the exponent in~\eqref{z4_induction_step} with $a=B_1$ and $b=B_{i_1}$, we get
\begin{equation*}
\delta_1\left[\int_0^\cdot g\left(h_t\right)\left(B_1^rB_{i_1}^{r_1}\cdots B_{i_{q+1}}^{r_{q+1}}\right)_t\ud t\right]=\sum_{i=1}^lc_iI_i,
\end{equation*}
where
\begin{equation*}
I_i=\delta_1\left[\int_0^\cdot g\left(h_t\right)\left(B_1\cos\theta_i-B_{i_1}\sin\theta_i\right)^{r+r_1}_t\left(B_{i_2}^{r_2}\cdots B_{i_{q+1}}^{r_{q+1}}\right)_t\ud t\right].
\end{equation*}
We will show that $\limsup_{\epsilon\to 0}E_{\epsilon}e^{I_i}\leq 1$ for each fixed $i$, as per Lemma~\ref{exp_terms_lemma} again. Let $R$ be the $2\times 2$ matrix representing rotation of angle $\theta_i$:
\begin{equation*}
R=
\begin{pmatrix}
\cos\theta_i & -\sin\theta_i\\
\sin\theta_i  & \cos\theta_i
\end{pmatrix},
\end{equation*}
and let
\begin{equation*}
A=
\begin{pmatrix}
R_{2\times 2} 		             &       0_{2\times\left(d-2\right)}\\
0_{\left(d-2\right)\times 2}     &       \operatorname{Id}_{\left(d-2\right)\times\left(d-2\right)}
\end{pmatrix}.
\end{equation*}
The matrix $A$ is orthogonal and therefore $W_t:=AB_t$ is a $d$-dimensional fractional Brownian motion with the same Hurst parameter. In addition, $\norm{W}=\norm{B}$, so that $E_{\epsilon}$ is the same for both processes. Note the following relations ($\tilde{\delta}$ denotes the divergence operators associated with $W$):
\begin{align*}
&W_1=B_1\cos\theta_i-B_{i_1}\sin\theta_i,\\
&B_1=W_1\cos\theta_i+W_2\sin\theta_i,\\
&\delta_1=\cos\theta_i\tilde{\delta}_1+\sin\theta_i\tilde{\delta}_2.
\end{align*}
These imply
\begin{align*}
I_i&=\cos\theta_i\tilde{\delta}_1\left[\int_0^\cdot g\left(h_t\right)\left(W_1^{r+r_1}W_{i_2}^{r_2}\cdots W_{i_{q+1}}^{r_{q+1}}\right)_t\ud t\right]\\
&\ \ \ \ \ +\sin\theta_i\tilde{\delta}_2\left[\int_0^\cdot g\left(h_t\right)\left(W_1^{r+r_1}W_{i_2}^{r_2}\cdots W_{i_{q+1}}^{r_{q+1}}\right)_t\ud t\right].
\end{align*}
We have
\begin{equation*}
\limsup_{\epsilon\to 0}E_{\epsilon}e^{\tilde{\delta}_1\left[\int_0^\cdot g\left(h_t\right)\left(W_1^{r+r_1}W_{i_2}^{r_2}\cdots W_{i_{q+1}}^{r_{q+1}}\right)_t\ud t\right]}\leq 1
\end{equation*}
by the induction hypothesis and the spherical symmetry of the norm, and, since $i_2>2$,
\begin{equation*}
\limsup_{\epsilon\to 0}E_{\epsilon}e^{\tilde{\delta}_2\left[\int_0^\cdot g\left(h_t\right)\left(W_1^{r+r_1}W_{i_2}^{r_2}\cdots W_{i_{q+1}}^{r_{q+1}}\right)_t\ud t\right]}\leq 1
\end{equation*}
by Lemma~\ref{exponential_limit_independent}. Lemma~\ref{exp_terms_lemma} now completes the proof.
\end{proof}
For $Z_5$, note that for any $\delta_0>0$,
\begin{align}\label{eq:Z5_inequality_OM}
E\left(e^{cZ_5}\mid\norm{B}<\epsilon\right)&=\int_0^\infty P\left(e^{cZ_5}>y\mid\norm{B}<\epsilon\right)\ud y\\
=\int_0^{e^{\delta_0}}&P\left(e^{cZ_5}>y\mid\norm{B}<\epsilon\right)\ud y+\int_{e^{\delta_0}}^\infty P\left(e^{cZ_5}>y\mid\norm{B}<\epsilon\right)\ud y\nonumber\\
&\leq e^{\delta_0}+\int_{\delta_0}^\infty P\left(cZ_5>x\mid\norm{B}<\epsilon\right)e^x\ud x\nonumber
\end{align}
By Theorem~\ref{theorem:relation_between_delta_and_Ito_integral_OM}, 
\begin{equation}
c\delta\left(\int_0^\cdot R\left(t,B_t\right)\ud t\right)=c\int_0^1K^{-1}\left[\int_0^sR\left(t,B_t\right)\ud t\right]\cdot\ud W_s
\end{equation}
where $\left(W_t\right)_{t\in\left[0,1\right]}$ is the $d$-dimensional Brownian motion in Equation~\eqref{eq:representation_of_fBm_in_interval_OM}. Note that $K^{-1}\left[\int_0^sR\left(t,B_t\right)\ud t\right]$ is adapted with respect to the filtration generated by $\left(W_t\right)$.\\ 
Let 
\begin{equation}
M_t=c\int_0^tK^{-1}\left[\int_0^sR\left(u,B_u\right)\ud u\right]\cdot\ud W_s.
\end{equation}
This is a one-dimensional martingale which satisfies
\begin{align}\label{eq:cross_variation_OM}
\left<M\right>_t&=c^2\int_0^t\abs{K^{-1}\left[\int_0^sR\left(u,B_u\right)\ud u\right]}^2\ud s\\
&\leq c^2\int_0^1\abs{K^{-1}\left[\int_0^sR\left(u,B_u\right)\ud u\right]}^2\ud s=c^2\abs{\int_0^sR\left(u,B_u\right)\ud u}_{\mathcal H}^2.\nonumber
\end{align}
Combining~\eqref{eq:H_norm_for_C1} and~\eqref{remainder_bound}, we get
\begin{equation}\label{eq:H_norm_Z5_OM}
\norm{c\int_0^\cdot R\left(t,B_t\right)\ud t}_{\rkhs}\leq \tilde{M}\epsilon^{k+1},\ \ \ P_\epsilon\text{-a.s.}
\end{equation}
Therefore by~\eqref{eq:cross_variation_OM} and~\eqref{eq:H_norm_Z5_OM},
\begin{equation}\label{eq:cross_variation2_OM}
\left<M\right>_t\leq \tilde{M}\epsilon^{2k+2},\ \ \ \forall t\in\left[0,1\right];\ P_\epsilon\text{-a.s.}
\end{equation}
By a classical Theorem, 
there exists some Brownian motion (possibly defined on an extension of the probability space; we keep denoting the probability measure by $P$, as there is no risk of error) $\left(Y_t\right)_{t\geq 0}$ such that $M_t=Y_{\left<M\right>_t}$. It follows that
\begin{align*}
P\left(\left.c\delta\left(\int_0^\cdot R\left(t,B_t\right)\ud t\right)>x\right|\norm{B}<\epsilon\right)&=\frac{P\left(M_1>x,\norm{B}<\epsilon\right)}{P\left(\norm{B}<\epsilon\right)}\\
&\hspace{-3.5cm}=\frac{P\left(Y_{\left<M\right>_1}>x,\norm{B}<\epsilon\right)}{P\left(\norm{B}<\epsilon\right)}\leq \frac{P\left(\max_{0\leq t\leq \left<M\right>_1}\abs{Y_t}>x,\norm{B}<\epsilon\right)}{P\left(\norm{B}<\epsilon\right)}\\
\text{\scriptsize(by~\eqref{eq:cross_variation2_OM})}&\leq \frac{P\left(\max_{0\leq t\leq \tilde{M}\epsilon^{2k+2}}\abs{Y_t}>x,\norm{B}<\epsilon\right)}{P\left(\norm{B}<\epsilon\right)}.
\end{align*}
Since $P\left(\max_{0\leq t\leq u}\abs{Y_t}>x\right)\leq\sqrt{\frac{u}{2\pi}}\frac{4}{x}e^{-\frac{x^2}{2u}}$ 
and $P\left(\norm{B}<\epsilon\right)\geq e^{-\frac{c}{\epsilon^{\frac{1}{H-\alpha}}}}$, 
we have
\begin{equation}
P\left(cZ_5>x\mid\norm{B}<\epsilon\right)\leq \frac{\sqrt{\frac{\tilde{M}}{2\pi}}\frac{4}{x}\epsilon^{k+1}e^{-\frac{x^2}{2\tilde{M}\epsilon^{2k+2}}}}{e^{-\frac{c}{\epsilon^{\frac{1}{H-\alpha}}}}}=\frac{C\epsilon^{k+1}}{x}e^{\frac{c}{\epsilon^{\frac{1}{H-\alpha}}}-\frac{x^2}{2\tilde{M}\epsilon^{2k+2}}}.
\end{equation}
Going back to~\eqref{eq:Z5_inequality_OM},
\begin{equation}
E\left(e^{cZ_5}\mid\norm{B}<\epsilon\right)\leq e^{\delta_0}+\frac{C\epsilon^{k+1}}{\delta_0}\int_{\delta_0}^{\infty}e^{\frac{c}{\epsilon^{\frac{1}{H-\alpha}}}-\frac{x^2}{2\tilde{M}\epsilon^{2k+2}}}e^x\ud x.
\end{equation}
If $2k+2>\frac{1}{H-\alpha}$, which is the condition on $k$ in Theorem~\ref{theorem:OM_functional}, then the right-hand-side converges to $e^{\delta_0}$ as $\epsilon\to 0+$. 
Letting $\delta_0\to 0+$, we obtain 
\begin{equation}
\limsup_{\epsilon\to 0+}E\left(e^{cZ_5}\mid\norm{B}<\epsilon\right)\leq 1.
\end{equation}
\medskip
This completes the proof of Theorem~\ref{theorem:OM_functional}.

%%%%%%%%%%%%%%%%%%%%%%%%%%%%%%%%%%%%%%%%%%%%%%%%%%%%%%%%%%%%%%%%%%%%%%%%%%%%%%%%%%%%%%%%%%
%%%%%%%%%%%%%%%%%%%%%%%%%%%%%%%%%%%%%%%%%%%%%%%%%%%%%%%%%%%%%%%%%%%%%%%%%%%%%%%%%%%%%%%%%%
%%%%%%%%%%%%%%%%%%%%%%%%%%%%%%%%%%%%%%%%%%%%%%%%%%%%%%%%%%%%%%%%%%%%%%%%%%%%%%%%%%%%%%%%%%
%%%%%%%%%%%%%%%%%%%%%%%%%%%%%%%%%%%%%%%%%%%%%%%%%%%%%%%%%%%%%%%%%%%%%%%%%%%%%%%%%%%%%%%%%%
%%%%%%%%%%%%%%%%%%%%%%%%%%%%%%%%%%%%%%%%%%%%%%%%%%%%%%%%%%%%%%%%%%%%%%%%%%%%%%%%%%%%%%%%%%
%%%%%%%%%%%%%%%%%%%%%%%%%%%%%%%%%%%%%%%%%%%%%%%%%%%%%%%%%%%%%%%%%%%%%%%%%%%%%%%%%%%%%%%%%%
%%%%%%%%%%%%%%%%%%%%%%%%%%%%%%%%%%%%%%%%%%%%%%%%%%%%%%%%%%%%%%%%%%%%%%%%%%%%%%%%%%%%%%%%%%
%%%%%%%%%%%%%%%%%%%%%%%%%%%%%%%%%%%%%%%%%%%%%%%%%%%%%%%%%%%%%%%%%%%%%%%%%%%%%%%%%%%%%%%%%%

\section{Proofs of the Lemmas}
\label{section:proofs_of_lemmas}

\begin{proof}[Proof of Lemma~\ref{lemma:differentiability_and_trace_of_processes}]
For $h\in\mathcal H$ and $r\in\reals$, thinking of $\left(B_t\right)$ as the canonical process,
\begin{equation}
u_t\left(\omega+rh\right)=\int_0^tp_s\left(B_s+rh_s\right)^m\ud s.
\end{equation}
Therefore
\begin{equation}
u_t\left(\omega+rh\right)-u\left(\omega\right)=mr\int_0^t p_sB_s^{m-1}h_s\ud s+\int_0^t\sum_{i=2}^m\binom{m}{i}p_sr^ih_s^iB_s^{m-i}\ud s.
\end{equation}
Rearranging,
\begin{equation}\label{eq:computation_of_H_derivative}
\frac{u_t\left(\omega+rh\right)-u\left(\omega\right)}{r}-m\int_0^t p_sB_s^{m-1}h_s\ud s=\int_0^t\sum_{i=2}^m\binom{m}{i}p_sr^{i-1}h_s^iB_s^{m-i}\ud s.
\end{equation}
The right-hand-side of~\eqref{eq:computation_of_H_derivative} is a.s. $C^1$, so according to Inequality~\eqref{eq:H_norm_for_C1}
\begin{equation}
\abs{\left(\int_0^t\sum_{i=2}^m\binom{m}{i}p_sr^{i-1}h_s^iB_s^{m-i}\ud s\right)_{t\in\left[0,1\right]}}_{\mathcal H}\leq \max_{s\in\left[0,1\right]}\abs{\sum_{i=2}^m\binom{m}{i}p_sr^{i-1}h_s^iB_s^{m-i}}\xrightarrow[r\to 0]{\text{a.s}.}0.
\end{equation}
This will prove~\eqref{eq:stochastic_gradient_of_process_OM} once its right-hand-side is shown to be a Hilbert-Schmidt operator on $\mathcal H$. Note that by Inequality~\eqref{eq:H_norm_for_C1}, the right-hand-side of~\eqref{eq:stochastic_gradient_of_process_OM} belongs to $\mathcal H$ for any $h\in\mathcal H$; so it is indeed an operator on $\mathcal H$. We denote it temporarily by $T$:

\begin{equation}\label{eq:trace_proof_Du}
Th\left(t\right)=\int_0^tg_sh_s\ud s\ \ \ \ \ \text{{\scriptsize ($g_s:=mp_sB_s^{m-1}$)}}.
\end{equation}

Recall the isometry $K$ between $L^2\left[0,1\right]$ and $\mathcal H$ which was reviewed in Subsection~\ref{subsection:the_CM_space_OM}. The operator $\overline{T}:=K^{-1}\circ T\circ K$ on $L^2\left[0,1\right]$ is unitarily equivalent to $T$. According to Equations~\eqref{eq:trace_proof_Du} and~\eqref{eq:OM_K_integral_representation},
\begin{equation}
T\left(Kf\right)\left(t\right)=\int_0^tg_s\int_0^1K\left(s,u\right)f_u\ud u\ud s.
\end{equation}
Thus
\begin{equation}
D_{0+}^{2H}\circ T\circ K\left(f\right)\left(t\right)=\frac{1}{\Gamma\left(1-2H\right)}\frac{\ud }{\ud t}\int_0^t\frac{1}{\left(t-r\right)^{2H}}\int_0^rg_s\int_0^1K\left(s,u\right)f_u\ud u\ud s\ud r,
\end{equation}
and so
\begin{align*}
\overline{T}f\left(t\right)&=\frac{t^{\frac12-H}}{c_H\Gamma\left(H+\frac12\right)\Gamma\left(\frac12-H\right)\Gamma\left(1-2H\right)}\cdot\\
&\hspace{0.8cm}\cdot\frac{\ud }{\ud t}\int_0^t\frac{v^{H-\frac12}}{\left(t-v\right)^{\frac12-H}}\frac{\ud }{\ud v}\int_0^v\frac{1}{\left(v-r\right)^{2H}}\int_0^rg_s\int_0^1K\left(s,u\right)f_u\ud u\ud s\ud r\ud v\\
&=\int_0^1k\left(t,u\right)f_u\ud u,
\end{align*}
where
\begin{align*}
k\left(t,u\right)&=A_Ht^{\frac12-H}\frac{\partial }{\partial t}\int_0^t\frac{v^{H-\frac12}}{\left(t-v\right)^{\frac12-H}}\frac{\partial }{\partial v}\int_0^v\frac{1}{\left(v-r\right)^{2H}}\int_0^rg_sK\left(s,u\right)\ud s\ud r\ud v\\
&=A_Ht^{\frac12-H}\int_0^1K\left(s,u\right)g_s\frac{\partial }{\partial t}\int_0^t\frac{v^{H-\frac12}}{\left(t-v\right)^{\frac12-H}}\frac{\partial }{\partial v}\int_0^v\frac{1}{\left(v-r\right)^{2H}}\One{s}{1}\left(r\right)\ud r\ud v\ud s\\
&=\Gamma\left(2H\right)\Gamma\left(\frac12-H\right)A_Ht^{\frac12-H}\int_0^1K\left(s,u\right)g_sD_{0+;t}^{\frac12-H}\left[t^{H-\frac12}D_{0+;t}^{2H}\left[\One{s}{1}\left(t\right)\right]\right]\ud s
\end{align*}
($A_H=\frac{1}{c_H\Gamma\left(H+\frac12\right)\Gamma\left(\frac12-H\right)\Gamma\left(1-2H\right)}$). By $D_{0+;t}^{2H}$ we mean $D_{0+}^{2H}$ acting on its argument as a function of $t$, with all other variables frozen (and mutatis mutandis for similar expressions). Note that
\begin{equation}
D_{0+;t}^{2H}\left[\One{s}{1}\left(t\right)\right]=\frac{1}{\Gamma\left(2H\right)}\frac{\partial }{\partial t}\int_0^t\frac{1}{\left(t-r\right)^{2H}}\One{s}{1}\left(r\right)\ud r=\frac{\One{s}{1}\left(t\right)}{\Gamma\left(2H\right)\left(t-s\right)^{2H}},
\end{equation}
whence
\begin{equation}
D_{0+;t}^{\frac12-H}\left[t^{H-\frac12}D_{0+;t}^{2H}\left[\One{s}{1}\left(t\right)\right]\right]=D_{0+;t}^{\frac12-H}\left[\frac{\One{s}{1}\left(t\right)t^{H-\frac12}}{\Gamma\left(2H\right)\left(t-s\right)^{2H}}\right].
\end{equation}
According to the definition of the fractional derivative, and~\cite[Equation~(2.49), p. 41]{fractional},
\begin{align*}
D_{0+;t}^{\frac12-H}\left[\frac{\One{s}{1}\left(t\right)t^{H-\frac12}}{\Gamma\left(2H\right)\left(t-s\right)^{2H}}\right]&=D_{s+;t}^{\frac12-H}\left[\frac{t^{H-\frac12}}{\Gamma\left(2H\right)\left(t-s\right)^{2H}}\right]\One{s}{1}\left(t\right)\\
&=\frac{\Gamma\left(1-2H\right)s^{\frac12-H}\One{s}{1}\left(t\right)}{\Gamma\left(2H\right)\Gamma\left(\frac12-H\right)t^{1-2H}\left(t-s\right)^{H+\frac12}}.
\end{align*}
Therefore
\begin{equation}
k\left(t,u\right)=A_H\Gamma\left(1-2H\right)t^{H-\frac12}\int_0^1\frac{s^{\frac12-H}\One{s}{1}\left(t\right)}{\left(t-s\right)^{H+\frac12}}K\left(s,u\right)g_s\ud s
\end{equation}
Taking into account the structure of the kernel $K\left(s,u\right)$ in~\eqref{eq:kernel_of_isometry_OM}, we arrive at
\begin{equation}\label{eq:kernel_of_unitary_equivalence_of_DU_OM}
k\left(t,u\right)=B_Ht^{H-\frac12}\int_u^t\frac{s^{\frac12-H}}{\left(t-s\right)^{H+\frac12}}K\left(s,u\right)g_s\ud s,\ \ \ u<t
\end{equation}
($B_H=A_H\Gamma\left(1-2H\right)$; and $k\left(t,u\right)=0$ for $t<u$).

The following estimate is~\cite[Theorem~3.2]{Decreusefond&Ustunel}:
\begin{equation}
\abs{K\left(s,u\right)}\leq C_Hu^{H-\frac12}\left(s-u\right)^{H-\frac12},\ \ \ u<s.
\end{equation}
Thus
\begin{equation}
\abs{k\left(t,u\right)}\leq D_Hu^{H-\frac12}\max_{s\in\left[0,1\right]}\abs{g_s}\int_u^t\frac{1}{\left(t-s\right)^{H+\frac12}\left(s-u\right)^{\frac12-H}}\ud s,\ \ \ u<t.
\end{equation}
The change of variables $x=\frac{s-u}{t-u}$ yields
\begin{equation}\label{eq:Beta_miracle_OM}
\int_u^t\left(s-u\right)^{H-\frac12}\left(t-s\right)^{-H-\frac12}\ud s=\int_0^1x^{H-\frac12}\left(1-x\right)^{-H-\frac12}\ud x=\Beta\left(\frac12+H,\frac12-H\right),
\end{equation}
and thus we arrive at
\begin{equation}\label{eq:estimate_on_kernel_of_Du_OM}
\abs{k\left(t,u\right)}\leq E_Hu^{H-\frac12}\max_{s\in\left[0,1\right]}\abs{g_s},\ \ \ u<t.
\end{equation}
Since the right-hand-side of~\eqref{eq:estimate_on_kernel_of_Du_OM} is square integrable, it follows that $\tilde{T}$ (and therefore $T$) is a Hilbert-Schmidt operator, which completes the proof of~\eqref{eq:stochastic_gradient_of_process_OM} and of~\eqref{eq:HS_norm_OM} (the latter by~\eqref{eq:kernel_of_unitary_equivalence_of_DU_OM}). Moreover,
\begin{equation}
\abs{Du}_{\operatorname{HS}}\leq F_H\max_{s\in\left[0,1\right]}\abs{g_s}\leq G_H\abs{p}_{\infty}\max_{s\in\left[0,1\right]}\abs{B_s^{m-1}}.
\end{equation}
On the other hand, according to Inequality~\eqref{eq:H_norm_for_C1},
\begin{equation*}
\abs{u}_\rkhs=\abs{\int_0^\cdot p_sB_s^m\ud s}_\rkhs\leq \abs{p}_{\infty}\max_{s\in\left[0,1\right]}\abs{B_s^m}.
\end{equation*}
Since $E\left(\max_{s\in\left[0,1\right]}\abs{B_s^{k}}\right)<\infty$ for any $k$,
we obtain $E\left(\abs{u}_{\mathcal H}^2\right),E\left(\abs{Du}_{\operatorname{HS}}^2\right)<\infty$. Thus $u\in\mathbb D^{1,2}\left(\mathcal H\right)$.\medskip

From this point and onward, assume that $m=1$ and $p_s=G\left(h_s\right)$ where $h\in\mathcal H$ and $G$ is Lipschitz continuous, as in Part 2 of the Lemma. We now proceed with the proof that the symmetrization $\operatorname{Sym}\left(Du\right)$ is a trace-class operator.

The kernel operator defined by $k\left(t,u\right)$ in~\eqref{eq:kernel_of_unitary_equivalence_of_DU_OM} was considered in~\cite{Onsager-Machlup}. In particular, it was shown (see Lemma~13 and its proof) that for a suitable constant $c$, the symmetrization of the kernel $k\left(t,u\right)-cG\left(h_t\right)\One{0}{t}\left(u\right)$ defines a trace-class operator on $L^2\left[0,1\right]$. It will therefore suffice to show that the symmetrization of the function $G\left(h_t\right)\One{0}{t}\left(u\right)$, which is $G\left(h_{t\vee u}\right)$, defines a trace-class operator. We will show that it satisfies the assumptions of Theorem~\ref{theorem:nuclear_sufficient_conditions} with $\alpha=H+\frac12$, i.e.:
\begin{equation}
\abs{\abs{G\left(h_{t\vee \cdot}\right)}_{W^{\frac12+H,2}}}_{L^2\left(\ud t\right)}<\infty
\end{equation}
We have already noted in Subsection~\ref{subsection:the_CM_space_OM} that $\mathcal H=I_{0+}^{\frac12+H}\left(L^2\left[0,1\right]\right)$. According to Theorem~18.3 and Remark~18.1 in~\cite{fractional}, $I_{0+}^{\frac12+H}\left(L^2\left[0,1\right]\right)\subset W^{\frac12+H,2}$. Thus
\begin{equation}\label{eq:sobolev_finite_integral_OM}
\iint_{\left[0,1\right]^2}\frac{\abs{h_t-h_s}^2}{\abs{t-s}^{\beta}}\ud t\ud s<\infty
\end{equation}
where $\beta=2+2\left(\frac12+H\right)$. Similarly,
\begin{equation} \label{equation:sobolev_G_norm}
\abs{G\left(h_{t\vee \cdot}\right)}_{W^{\frac12+H,2}}^2=\iint_{\left[0,1\right]^2}\frac{\abs{G\left(h_{t\vee v}\right)-G\left(h_{t\vee u}\right)}^2}{\abs{v-u}^{\beta}}\ud u\ud v
\end{equation}
We can split the square into four parts: $Q:=\left[0,1\right]^2=A_t\cup B_t\cup C_t\cup D_t$, where
\[ \begin{cases}
A_t=\left\{ \left(u,v\right)\in Q\mid u\leq t\leq v \right\}\\
B_t=\left\{ \left(u,v\right)\in Q\mid t\leq u,v \right\}\\
C_t=\left\{ \left(u,v\right)\in Q\mid v\leq t\leq u \right\}\\
D_t=\left\{ \left(u,v\right)\in Q\mid u,v\leq t \right\}
\end{cases} \]
Clearly, 
\begin{equation} \label{equation:sobolev_D} \iint_{D_t}\frac{\abs{G\left(h_{t\vee v}\right)-G\left(h_{t\vee u}\right)}^2}{\abs{v-u}^{\beta}}\ud u\ud v=0
\end{equation}
and
\begin{equation} \label{equation:sobolev_B}
\iint_{B_t}\frac{\abs{G\left(h_{t\vee v}\right)-G\left(h_{t\vee u}\right)}^2}{\abs{v-u}^{\beta}}\ud u\ud v\leq M\iint_{B_t}\frac{\abs{h_v-h_u}^2}{\abs{v-u}^{\beta}}\ud u\ud v
\end{equation}
where $M$ is the Lipschitz constant of $G$. Now, on $A_t$, $\abs{v-u}\geq \abs{v-t}$ and therefore
\begin{equation} \label{equation:sobolev_A}
\iint_{A_t}\frac{\abs{G\left(h_{t\vee v}\right)-G\left(h_{t\vee u}\right)}^2}{\abs{v-u}^{\beta}}\ud u\ud v\leq M\iint_{A_t}\frac{\abs{h_v-h_t}^2}{\abs{v-t}^{\beta}}\ud u\ud v\leq M\int_{0}^1\frac{\abs{h_v-h_t}^2}{\abs{v-t}^{\beta}}\ud v.
\end{equation}
In the same way we also have
\begin{equation} \label{equation:sobolev_C}
\iint_{C_t}\frac{\abs{G\left(h_{t\vee v}\right)-G\left(h_{t\vee u}\right)}^2}{\abs{v-u}^{\beta}}\ud u\ud v \leq M\int_{0}^1\frac{\abs{h_t-h_u}^2}{\abs{t-u}^{\beta}}\ud u.
\end{equation}
Putting~\eqref{equation:sobolev_G_norm}-\eqref{equation:sobolev_C} together, we obtain:
\begin{equation}
\abs{G\left(h_{t\vee \cdot}\right)}_{W^{\frac12+H,2}}^2\leq M\abs{h}_{W^{\frac12+H,2}}^2+2M\int_{0}^1\frac{\abs{h_v-h_t}^2}{\abs{v-t}^{\beta}}\ud v,
\end{equation}
so that
\begin{equation}
\abs{\abs{G\left(h_{t\vee \cdot}\right)}_{W^{\frac12+H,2}}}_{L^2\left(\ud t\right)}^2\leq 3M\abs{h}_{W^{\frac12+H,2}}^2<\infty.
\end{equation}
Therefore according to Theorem~\ref{theorem:nuclear_sufficient_conditions}, $G\left(h_{t\vee u}\right)$ defines a trace-class operator.
 \medskip

It remains to prove~\eqref{eq:trace_of_Du_OM}.\\
According to~\cite[Theorem~3.1]{Trace_of_kernel_operators}, 
\begin{equation}\label{eq:trace_general_OM}
\TR\left(\overline{T}\right)=\int_0^1\left(\lim_{r\to 0}\frac{1}{4r^2}\iint_{C_r\left(v,v\right)}k\left(t,u\right)\ud t\ud u\right)\ud v
\end{equation}
where $C_r\left(v,v\right)=\left[v-r,v+r\right]^2$.
Fix some $v>0$. Plugging~\eqref{eq:kernel_of_isometry_OM} into~\eqref{eq:kernel_of_unitary_equivalence_of_DU_OM}, we get (for $t>u$)
\begin{equation}\label{eq:kernel_2_of_unitary_equivalence_of_DU_OM}
k\left(t,u\right)=c_HB_Ht^{H-\frac12}\int_u^t\frac{s^{\frac12-H}F\left(H-\frac12,\frac12-H;H+\frac12;1-\frac{s}{u}\right)g_s}{\left(s-u\right)^{\frac12-H}\left(t-s\right)^{H+\frac12}}\ud s.
\end{equation}
Since the hypergeometric function defined in~\eqref{eq:hypergeometric_function_OM} is continuous in $\left(-1,1\right)$ (see~\cite{transcendental_functions} for details), the numerator in the integrand of~\eqref{eq:kernel_2_of_unitary_equivalence_of_DU_OM} is continuous at $\left(s,u\right)=\left(v,v\right)$. On the other hand, by~\eqref{eq:Beta_miracle_OM} and since $c_HB_H\Beta\left(\frac12+H,\frac12-H\right)=1$, we can denote:
\begin{equation}
G\left(t,u,s\right)=t^{H-\frac12}s^{\frac12-H}F\left(H-\frac12,\frac12-H;H+\frac12;1-\frac{s}{u}\right)g_s,
\end{equation}
and get the following inequality for $v>r>0$:
\begin{equation}
\min_{v-r\leq t,u,s\leq v+r }\left\{G\left(t,u,s\right)\right\}\leq k\left(t,u\right)\leq \max_{v-r\leq t,u,s\leq v+r }\left\{G\left(t,u,s\right)\right\}.
\end{equation}
Therefore by continuity,
 noting that $F\left(a,b;c;0\right)=1$, we conclude that 
\begin{equation}
\lim_{r\to 0}\frac{1}{4r^2}\iint_{C_r\left(v,v\right)}k\left(t,u\right)\ud t\ud u=\frac12g_v,\ \ v>0.
\end{equation}
Thus according to Equation~\eqref{eq:trace_general_OM}
\begin{equation}
\TR\left(T\right)=\TR\left(\overline{T}\right)=\frac12\int_0^1g_v\ud v.
\end{equation}
\end{proof}
%%%%%%%%%%%%%%%%%%%%%%%%%%%%%%%%%%%%%%%%%%%%%%%%%%%%%%%%%%%%%%%%%%%%%%%%
%%%%%%%%%%%%%%%%%%%%%%%%%%%%%%%%%%%%%%%%%%%%%%%%%%%%%%%%%%%%%%%%%%%%%%%%

\begin{proof}[Proof of Lemma~\ref{lemma:Girsanov_argument}]
Recall SDE~\eqref{SDE}:
\begin{equation}\label{SIE}
X_t=\int_0^tb\left(X_s\right)\ud s+B_t.
\end{equation}
Set 
\begin{equation}\label{eq:Girsanov_shift}
u_t=\int_0^tb\left(\Phi_s+B_s\right)\ud s-\Phi_t,\ \ 0\leq t\leq 1.
\end{equation}
%By Lemma~\ref{lemma:differentiability_of_processes}, $u\in\mathbb D^{1,2}\left(\mathcal H\right)$. 
The Novikov condition~\eqref{novikov} follows from the fact that $\abs{u}_{\mathcal H}$ is a.s. bounded.\\
Since $u$ is adapted, we may apply Theorem~\ref{Girsanov}: the process $B_t-u_t$ is a fractional Brownian motion (with the same Hurst parameter $H$) with respect to the probability measure
\begin{equation*}
\ud Q=e^{\delta u-\frac{1}{2}\norm{u}_\rkhs^2}\ud P.
\end{equation*}
We have thus built a `weak solution' $\left(Q,B_t-u_t,B_t+\Phi_t\right)$ of SDE~\eqref{SDE}. That is, $B_t+\Phi_t$ solves~\eqref{SIE} if $B_t$ is replaced with $B_t-u_t$. It follows that
\begin{equation}
P\left(\norm{X-\Phi}<\epsilon\right)=Q\left(\norm{B}<\epsilon\right)=E\left(\one_{\left\{\norm{B}<\epsilon\right\}}e^{\delta u-\frac{1}{2}\norm{u}_\rkhs^2}\right).
\end{equation}
Going back to~\eqref{main_OM_functional}, we have come to
\begin{equation}
e^{J\left(\Phi\right)}=\lim_{\epsilon\to 0}E\left(e^{\delta u-\frac{1}{2}\norm{u}_\rkhs^2}\mid \norm{B}<\epsilon\right).
\end{equation}
%From now on we will use the notation $E_{\epsilon}:=E\left(\cdot\big|\norm{B}<\epsilon\right)$.
Taking into account the definition of $u$ in~\eqref{eq:Girsanov_shift} and the proposed Onsager-Machup functional in Equation~\eqref{main_OM_functional}, the Lemma is proved.
\end{proof}

\bibliography{bib_OM}
\bibliographystyle{plain}

\end{document}